\DeclareMathAlphabet{\mathpzc}{OT1}{pzc}{m}{it}
\numberwithin{equation}{section}
\newtheorem{thm}[equation]{Theorem}
\newtheorem*{thm*}{Theorem}
\newtheorem{prop}[equation]{Proposition}
\newtheorem{defn}[equation]{Definition}
\newtheorem*{remark*}{Remark}
\newtheorem{lemma}[equation]{Lemma}
\newtheorem{remark}[equation]{Remark}
\newtheorem{ex}[equation]{Example}
\newtheorem{coro}[equation]{Corollary}
\newcommand{\exref}[1]{Ex\-am\-ple \ref{#1}}
\newcommand{\thmref}[1]{Theo\-rem \ref{#1}}
\newcommand{\lemref}[1]{Lem\-ma \ref{#1}}
\newcommand{\propref}[1]{Prop\-o\-si\-tion \ref{#1}}
\newcommand{\corref}[1]{Cor\-ol\-lary \ref{#1}}
\newcommand{\remref}[1]{Re\-mark \ref{#1}}
\providecommand{\intersect}{\cap}
\newenvironment{block*}
		{\hspace*{0.05\textwidth}\begin{minipage}[t]{0.9\textwidth}}
		{\end{minipage}\\}
\newlength{\categorywidth}       
\newlength{\infowidth}          
\newlength{\categorysep}
\newlength{\entrysep}
\DeclareMathOperator{\chr}{char }
\DeclareMathOperator{\rad}{rad }
\DeclareMathOperator{\End}{End }
\DeclareMathOperator{\Sym}{Sym }
\DeclareMathOperator{\Adj}{Adj }
\DeclareMathOperator{\GL}{GL}
\DeclareMathOperator{\GF}{GF}
\DeclareMathOperator{\Gal}{Gal }
\DeclareMathOperator{\Aut}{Aut}
\DeclareMathOperator{\memb}{memb}
\DeclareMathOperator{\rank}{rank }
\newcommand{\Bi}{\mathsf{Bi} }
\begin{document}

\title{Finding central decompositions of $p$-groups}
\author{James B. Wilson}
\address{
	Department of Mathematics\\
	The Ohio State University\\
	Columbus, Ohio 43210\\
}
\email{wilson@math.ohio-state.edu}
\date{\today}
\thanks{This research was supported in part by NSF Grant DMS 0242983.}
\keywords{central products, $p$-groups, Lie rings, bilinear maps, 
$*$-rings, polynomial time, Las Vegas}

\begin{abstract} Polynomial-time
algorithms are given to find a central decomposition of maximum size
for a finite $p$-group of class $2$ and for a nilpotent Lie ring
of class $2$. 
The algorithms use Las Vegas probabilistic
routines to compute the structure of finite $*$-rings and also 
the Las Vegas \textsf{C-MeatAxe}.  When $p$ is small, 
the probabilistic methods can be replaced by deterministic 
polynomial-time algorithms.

The methods introduce new group isomorphism invariants
including new characteristic subgroups.
\end{abstract}

\maketitle

%
%
\section{Introduction}

The main goal of this paper is to prove:
\begin{thm}\label{thm:main}
There are deterministic and Las Vegas polynomial-time algorithms which, 
given a finite $p$-group $P$ of class $2$, return a set $\mathcal{H}$ of subgroups of $P$ where distinct members pairwise commute, and of 
maximum size such that $\mathcal{H}$ generates
$P$ and no proper subset does.
\end{thm}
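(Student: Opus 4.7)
The plan is to translate the group-theoretic problem into a problem about bilinear maps, solve it using the theory of $*$-rings, and transfer the solution back to $P$.

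First, I would associate to $P$ its commutator bilinear map $b : \bar{P} \times \bar{P} \to [P,P]$ defined by $b(xZ(P), yZ(P)) = [x,y]$, where $\bar{P} = P/Z(P)$. Because $P$ has class $2$, this is well-defined and $\mathbb{Z}$-bilinear. A central decomposition $\mathcal{H}$ of $P$ corresponds essentially to a decomposition of $\bar{P}$ into a direct sum of subgroups $\bar{H}_i$ that are pairwise orthogonal with respect to $b$ (i.e.\ $b(\bar{H}_i, \bar{H}_j) = 0$ for $i \neq j$); irredundance of $\mathcal{H}$ translates to indecomposability of the summands, and maximality of $|\mathcal{H}|$ corresponds to the decomposition being as fine as possible.

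Next, I would introduce the adjoint ring $\Adj(b) = \{(f,g) \in \End(\bar{P})^2 : b(xf,y) = b(x,yg)\}$. The map $(f,g) \mapsto (g,f)$ is an involution, making $\Adj(b)$ a $*$-ring. Fully orthogonal decompositions of $b$ are in bijection with complete sets of pairwise-orthogonal $*$-symmetric idempotents of $\Adj(b)$, and maximum-size decompositions correspond to those produced by primitive such idempotents.

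The computational step is then to invoke the Las Vegas (or, for small $p$, deterministic) polynomial-time routines for $*$-rings to compute a complete set of primitive pairwise-orthogonal $*$-symmetric idempotents $e_1, \ldots, e_r$ in $\Adj(b)$. Each $e_i$ carves out a subspace $\bar{H}_i$ of $\bar{P}$; pulling back to $P$ and attaching appropriate pieces of $Z(P)$ produces the subgroups $H_i$ comprising the desired central decomposition $\mathcal{H}$.

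The main obstacle I expect is establishing the correspondence rigorously: one must show that group-theoretic central decompositions correspond precisely to orthogonal decompositions of $b$ (which requires treating the centre and the radical of $b$ with care), and that \emph{maximum size} on the group side matches \emph{primitive idempotent} on the ring side. A secondary obstacle is performing the lift back to $P$ in polynomial time while distributing $Z(P)$ among the $H_i$ in a way that preserves both irredundance and the central-decomposition property; this calls for a supplementary decomposition of the centre itself, and the C-MeatAxe is likely the right tool for splitting the irreducible constituents that arise.
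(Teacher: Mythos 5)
Your proposal follows essentially the same path as the paper: pass to the commutator bilinear map $b$ on $P/Z(P)$, characterize central decompositions via $\perp$-decompositions of $b$, translate further to self-adjoint frames (pairwise-orthogonal self-adjoint-primitive idempotents) in the $*$-ring $\Adj(b)$, compute such a frame using Las Vegas $*$-ring structure routines (deterministic for small $p$), and pull the decomposition back to $P$ while distributing the centre. The only minor deviation is your suggestion to use the \textsf{C-MeatAxe} for the supplementary decomposition of $Z(P)$; the paper instead uses the Brooksbank--Luks module-direct-decomposition algorithm for that step (with the \textsf{C-MeatAxe} appearing earlier, inside the computation of the $*$-simple factors of $\Adj(b)$).
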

We call $\mathcal{H}$ a \emph{central decomposition} of $P$ since
$P$ is a central product of the groups in $\mathcal{H}$ (with
centers permitted to overlap haphazardly) \cite[(11.1)]{Aschbacher}.
$P$ is input as a permutation, matrix, or (black-box) polycyclic
group.

\thmref{thm:main} applies a new group isomorphism invariant
for $p$-groups: an associative ring with involution, i.e.: a $*$-ring. 
Central decompositions are a natural 
application of these $*$-ring methods and appear to be undetectable by
conventional $p$-groups methods such as using factors of a characteristic central series.  The $*$-rings convert the commutation structure of a $p$-group into classical questions about ring structure which can be computed using linear algebra.  The ``atoms'' of a central decomposition (\emph{centrally indecomposable} subgroups) have specific associated $*$-rings making them detectable and restricting 
their structure.

\thmref{thm:main} applies broadly, but \emph{special} groups are our 
main focus, specifically, $p$-groups $P$ with elementary abelian Frattini subgroup $\Phi(P)=P'=Z(P)$. These groups have few discernible
characteristic subgroups, so group isomorphism invariants of any kind are helpful.  Despite their name, special groups are diverse, 
comprising at least $p^{2n^3/27-4n^2/9}$ of the at most 
$p^{2n^3/27+O(n^{8/3})}$ groups of order $p^n$ 
\cite[Theorem 2.3]{Higman:enum}, \cite[p. 153]{Sims:enum}.  
While there are $p^{2n^3/27+O(n^2)}$ centrally decomposable
special groups of order $p^n$ (e.g.: $P\times \mathbb{Z}_p$), 
using $*$-rings shows there are $p^{2n^3/27+O(n^2)}$ centrally
indecomposable special groups of order $p^n$ as well \cite{Wilson:indecomp}.  

Using the group isomorphism invariants developed for \thmref{thm:main}, we introduce various other applications such as defining new characteristic and fully invariant subgroups of $p$-groups as well as algorithms to find generators for these subgroups. We also consider the problem of central products of general groups and explain the importance of P. Hall's isoclinism to the study of central decompositions.  The details of these applications as well as useful examples are provided in the closing Sections \ref{sec:closing} and
\ref{sec:general}, and Appendices \ref{app:simples} and \ref{app:Tang}.

Whereas it is customary to use nilpotent Lie rings in order to exploit
the commutation of a $p$-group, this does not seem to be helpful 
for central decompositions.  Indeed, our $*$-rings are nonnilpotent
and can be simple, semisimple, or have large radicals.  
We use the radical and semisimple structure of $*$-rings 
for \thmref{thm:main}, and also for the obvious analogue, \thmref{thm:Lie}, for nilpotent Lie rings of class $2$ (including 
characteristic $0$).

Central products have various irregularities which set them apart 
from the more familiar but special case of direct products.  If 
$\mathcal{H}$ is a central decomposition of $P$ and $H\in\mathcal{H}$, then considering $P/H$ can omit the intricate intersections of the members of $\mathcal{H}-\{H\}$.  Therefore, inductive proofs and greedy algorithms seem impossible with central products.  There can be no Theorem of Krull-Remak-Schmidt type for central products, for example, $D_8\circ D_8\cong Q_8\circ Q_8$ and similar examples for
odd extraspecial $p$-groups \cite[Theorem 5.5.2]{Gor}.  More strikingly, C. Y. Tang \cite[Section 6]{Tang:cent-2} gives a group of order $2^{12}$ which is the central product of two centrally indecomposable subgroups, but also the central product of three centrally indecomposable subgroups (Example \ref{ex:Tang}).
\thmref{thm:main} finds a central decomposition of maximum length 
in one pass, rather than through the gradual refinement of an evolving central decomposition, and so avoids the latter problem.  

\begin{remark}
There can be \emph{any} number of $\Aut P$-orbits of 
 central decompositions as in \thmref{thm:main}, but if $P$ has class $2$ and exponent
$p$, these orbits can be classified using Jordan algebras 
\cite[Theorem 1.1]{Wilson:unique}.
\end{remark}

The algorithms for \thmref{thm:main} perform with roughly the same
asymptotic efficiency as algorithms for modules of a comparable size.
Essential tools for our algorithms include the \textsf{MeatAxe}  
\cite{MeatAxe1,MeatAxe2,C-MeatAxe} and algorithms for rings 
introduced by Ronyai, Friedl, and Ivanyos \cite{Ronyai,Ivanyos:fast-alge}. 

\begin{remark}
The author and P. A. Brooksbank recently revisited
the essential algorithms for $*$-rings introduced in Sections
\ref{sec:adj} and \ref{sec:*-rings} \cite{Brooksbank-Wilson:*-ring,
Brooksbank-Wilson:isom}.  The resulting algorithms make 
greater use of fast module theory methods, improve the complexity 
of those sections, and are implemented for use in \textsf{MAGMA} \cite{Magma}.  Early tests have handled randomized examples for $p$-groups of size $p^{45}$ with rank $36$ and $p=3,5,7,11$, and used roughly five seconds of real-time on a conventional laptop, and examples of size $p^{196}$ with intentionally complex central decompositions took one hour on a laboratory computer with extensive memory; details are included in \cite{Brooksbank-Wilson:*-ring,Brooksbank-Wilson:isom}.
\end{remark}

\subsection{Survey of the paper}
Section \ref{sec:background} consists of background.

In Section \ref{sec:groups}, our algorithm passes from $P$ to the 
bilinear map $b:P/Z(P)\times P/Z(P)\to P'$ of commutation in $P$.  
It is shown that central 
decompositions of $P$ correspond to orthogonal decompositions of $b$
(\propref{prop:cent-perp} and \thmref{thm:cent-indecomp}).  To find a 
fully refined orthogonal decomposition of $b$, the ring of adjoints
of $b$ is computed.  This is a natural $*$-ring.
Continuing the translation of the problem, orthogonal decompositions
are related to self-adjoint idempotents of $\Adj(b)$ 
(\corref{coro:perp-idemp}). 
These translations occupy Section \ref{sec:adj}. 

\begin{remark}
As suggested above, when $b$ is a bilinear \emph{map}
(rather than a form) the ring of adjoints can be far from simple and can have a rich structure of radicals and semisimple factors.  Examples can be 
constructed to demonstrate this structure occurs within our application 
to $p$-groups, even for $p$-groups of small order \cite[Section 7]{Wilson:unique}.
\end{remark}

In Section \ref{sec:*-rings}, we begin the process of constructing
self-adjoint idempotents by using the semisimple and radical structure of $\Adj(b)$.  This structure can be computed efficiently by reducing to rings of characteristic $p$ and applying the algorithms of Ronyai, Friedl, and Ivanyos for finite $\mathbb{Z}_p$-algebras 
\cite{Ronyai,Ivanyos:fast-alge,Ronyai-Ivanyos:tapas}.  This stage uses
Las Vegas polynomial-time algorithms for factoring polynomials over
finite fields of characteristic $p$, such as the methods of Berlekamp
or Cantor-Zassenhaus
\cite[Chapter 14]{vzG}.  However, 
for a deterministic algorithm (for small $p$), Las Vegas
algorithms can be avoided.

Section \ref{sec:final} includes the proof of \thmref{thm:main}
by first finding a orthogonal decomposition of $b$ of maximum possible size and converting this to a central decomposition of $P$ of maximum possible possible size.

Section \ref{sec:closing} creates the analogue of \thmref{thm:main} for nilpotent Lie rings of class $2$, introduces the four families of centrally indecomposable $p$-groups, and presents new characteristic subgroups which are easily identified using $\Adj(b)$.

Section \ref{sec:general} shows how the nonabelian members of a central decompositions are preserved by group isoclinisms of any group, not only finite $p$-groups.  There a conjecture is given concerning the uniqueness of central decompositions of maximum possible size.  Then the r\^{o}le of adjoints is then expanded to central products of general groups is explained.

The appendices give examples which demonstrate that the cases considered in Section \ref{sec:*-rings} do occur in the context of finite $p$-groups.  We also provide an alternative proof of the example of C.Y. Tang \cite[Section 6]{Tang:cent-2} using the methods of \thmref{thm:main}.  Our proof extends the example to an infinite expanding family of examples.

%
%
\section{Background}\label{sec:background}
Throughout this work we assume $p$ is a prime.  Unless otherwise obvious, all our groups, rings, modules, and algebras are finite.  
All our associative rings are unital.  We express abelian groups 
additively.

We use $A\sqcup B$ for the disjoint union of sets $A$ and
$B$, and $A-B$ for the complement of $A\intersect B$ in $A$.
For details on computational complexity and rigorous treatments
of polynomial-time and Las Vegas algorithms see \cite[Chapter I]{Seress:book}.

For a $p$-group $P$, we let $P'=[P,P]$ denote the derived
subgroup of $P$, $Z(P)$ the center of $P$, and $\Phi(P)$ the
Frattini subgroup of $P$.

We have need in various places to apply homomorphisms and isomorphisms
between finite abelian $p$-groups, rings, and algebras.  We say a homomorphism
is \emph{effective} when it can be evaluated efficiently -- for instance with
the same cost as matrix multiplication -- and a coset representative for
the preimage of an element in the codomain can also be found efficiently.  
This means that effective isomorphisms are easily evaluated and inverted
on any desired element.  

\subsection{Central products and central decompositions}

The term \emph{central product} was invented by P. Hall to describe
a specific type of amalgamated product especially common when constructing
$p$-groups  \cite[Section 3.2]{Hall:fin-by-nil}.  Specifically,
a \emph{central product} over a set $\mathcal{H}$ of groups is 
an epimorphism $\varphi:\prod_{H\in\mathcal{H}} H \to G$ such that 
$H\intersect \ker \varphi=1$ for all $H\in\mathcal{H}$ 
\cite[(11.1)]{Aschbacher}.  The problem with that definition is that it allows any epimorphism, for instance, $\mathbb{Z}_p^n\to \mathbb{Z}_p$ so that $\mathbb{Z}_p$ is a central product of an arbitrary number of groups.  To avoid this obvious degeneracy, we consider only central products which have the added constraint: $\langle \mathcal{J}\rangle\varphi =G$ for $\mathcal{J}\subseteq \mathcal{H}$ implies $\mathcal{J}=\mathcal{H}$.  All other central products will be known as \emph{degenerate} so that by default central products are nondegenerate.

A \emph{central decomposition} is a set $\mathcal{H}$ of subgroups of $P$ which generates $P$, no proper subset does, and distinct members commute.  Note that $1$ is never in a central decomposition.  When $\{P\}$ is the only central decomposition of $P$, then $P$ is \emph{centrally indecomposable}.  A central decomposition is 
\emph{fully refined} when its members are centrally indecomposable.
If $\mathcal{H}$ is a central decomposition of $P$, then the direct
product $\prod_{H\in\mathcal{H}} H$ maps homomorphically onto $P$ via 
$(x_H)_{H\in\mathcal{H}}\mapsto \prod_{H\in\mathcal{H}} x_H$, and
the kernel of the map intersect each $H\in\mathcal{H}$ trivially.  
Thus, central decompositions give rise to central products, and
vice-versa; compare \cite[(11.1)]{Aschbacher}.

\begin{remark}
These definitions are not sufficient to guarantee that
a central decomposition of an abelian group is a direct product
(e.g.: $\{\langle (1,0)\rangle,\langle (1,1)\rangle\}$ is 
a central decomposition of $\mathbb{Z}_{p^2}\times \mathbb{Z}_p$ 
but not a direct decomposition).  Yet, all fully refined central decompositions of an abelian group have size equal to the rank 
of the group and our algorithms for \thmref{thm:main} make an
effort to return direct factors when possible.
\end{remark}

\subsection{Representing groups for computation}\label{sec:grp}

We assume throughout that $P$ is a finite $p$-group of class $2$
(i.e.: $P'\leq Z(P)$) for a known prime $p$.  Groups and subgroups will be specified with generators; so, $P=\langle S\rangle$.  We will not consider the specific representation of $P$, but assume only that it can be input with $O(|S|n)$ bits of data (ex: $n=|\Omega|$ if $G$ acts faithfully on $\Omega$ and $n=d^2\log q$ if $P\leq \GL(d,q)$) 
and that there are polynomial-time, in $n$, algorithms which: multiply, invert, and test equality of elements in $P$; and also test membership, i.e.: given $g\in P$ and $T\subseteq P$, determine if $g\in\langle T\rangle$. The first three problems have standard $O(n^{2})$-time algorithms (or better).  However, the membership-test algorithms are considerably more involved, see \cite[Section 3.1]{HoltEO}, \cite[Chapters 3-4]{Seress:book}, and \cite[Theorem 3.2]{Luks:mat}.

\begin{remark}
Polycyclic groups can also be used as input; however, there are no 
known polynomial-time algorithm to multiply with such groups 
\cite[p. 670]{LG-Soicher:collector}.   Hence, \thmref{thm:main} 
treats polycyclic group inputs as ``black-box'' groups so that
polynomial-time refers to a the total number of group multiplications and membership tests.
\end{remark}

The assumptions on $P$ given thus far lead to deterministic polynomial-time algorithms which: find $|\langle T\rangle|$ for any $T\subseteq P$, find generators for the normal closure $\langle T^G\rangle$ of $T\subseteq P$; find generators for $P'$, and find generators for $Z(P)$ \cite[Section 3.3]{HoltEO}.  These are the additional algorithms we assume for our $p$-groups.  

We will use the following in the timing of our algorithms:
\begin{enumerate}[(i)]
\item $\memb(P)$ -- the time to perform membership test in $P$,
\item $\rank P$ -- the rank of $P$, i.e. $\log_p [P:\Phi(P)]$,
\item $\exp(P)$-- the exponent of $P$, i.e. the smallest $p^e$ such that
$P^{p^e}=1$.
\end{enumerate}
Both $P'$ and $Z(P)$ can be computed once at the start of our algorithms, and will not contribute to the overall complexity.  We store any relevant elements of our groups as words (straight-line-programs) in the original generating set of $P$.  We define homomorphisms by the images of the generators and therefore pulling back elements of the images can be done by pulling back words in the appropriate generating sets.

\subsection{Central products and discrete logs}

Suppose that $P\leq \GL(d,q)$ with $p>d$ and $(p,q)=1$.  This is enough to require that $P$ embed in $A:=\GF(q^{e_1})^\times \times\cdots \times\GF(q^{e_s})^\times$, and so $P$ is abelian.  The centrally indecomposable abelian groups are cyclic of prime power order.  However, to determine that a subgroup of $A$ is cyclic appears to be a very difficult number theory problem in general solved (in non-polynomial-time) by discrete logs \cite[Section 7.1]{HoltEO}.

For \thmref{thm:main} we assume $P$ has class $2$; hence, $p<d$ or $p|q$ and thus the algorithms of \cite[Theorem 3.2]{Luks:mat} can be applied instead of discrete logs.  Thus, there are no discrete log type problems to consider for matrix $p$-groups of class $2$.

\subsection{Abelian $p$-groups, bases, and
solving systems of equations}\label{sec:linear}
We outline the obvious generalizations of linear algebra we require
to work with abelian $p$-groups.  A careful exposition is given in 
\cite[Chapter I,Section I.G]{McDonald:comm-lin}.

Let $V$ be a finite abelian $p$-group.  A set $\mathcal{X}\subseteq V$ is \emph{linearly independent} if $0=\sum_{x\in\mathcal{X}} s_x x$, $s_x\in\mathbb{Z}$, implies $s_x\equiv 0 \mod{|x|}$, for all $x\in\mathcal{X}$.  A \emph{basis} $\mathcal{X}$ for $V$ is a linearly independent generating set of $V$; hence, $V=\bigoplus_{x\in\mathcal{X}} \langle x\rangle$.  Every basis of $V$ determines an isomorphism to an additive representation $\mathbb{Z}_{p^{e_1}}\oplus\cdots\oplus \mathbb{Z}_{p^{e_s}}$ for $e_1\leq \cdots \leq e_s\in\mathbb{Z}^+$.  Operating in the latter representation is preferable to $V$'s original representation and we assume that all abelian groups (including subgroups) are specified with a basis.

Each endomorphism $f$ of $V$ can be represented by an integer matrix 
$F=[F_{ij}]$ such that $p^{e_j-e_i}|F_{ij}$, $1\leq i\leq j\leq s$, and furthermore, every such matrix induces an endomorphism of $V$ 
(with respect to $\mathcal{X}$) \cite[Theorem 3.3]{AutAbel}. 

To row-reduce an $m\times n$ matrix $A$ with entries in $\mathbb{Z}_{p^e}$ is a modification of Gaussian elimination: first sort the rows so that the least residue classes satisfy $A_{11}|A_{i1}$ as integers, for all $i\geq 1$, then continue with standard row reduction noting that it may be impossible to clear entries above a pivot entry.  That 
process uses $O(m^2 n)$ operations in $\mathbb{Z}_{p^e}$ and leads
to algorithms which convert generators of $V$ into a basis, extend
linearly independent subsets of $V$ to bases, and compute the intersection of subgroups.  Improvements on these methods can be had, consider and \cite[Theorem 8.3]{McKenzie-Cook}.  

Recently, P. A. Brooksbank and E. M. Luks created a polynomial-time
algorithm which, given a module $M$ and nontrivial submodule $N$,
returns a direct decomposition $M=X\oplus Y$ with $N\leq X$ and
$X$ minimal with that property \cite[Theorem 3.6]{Brooksbank-Luks:mod}.  We use that result in the specific context of $\mathbb{Z}_{p^e}$-modules.

\subsection{Bilinear maps, $\perp$-decompositions, and isometry}
\label{sec:bi}
A \emph{$\mathbb{Z}_{p^e}$-bilinear map} 
$b:V\times V\to W$ is a function of $\mathbb{Z}_{p^e}$-modules 
$V$ and $W$ where
\begin{equation}
	b(su+u',tv+v') = stb(u,v)+sb(u,v')+tb(u',v)+b(u',v'),
\end{equation}
for each $u,u',v,v'\in V$ and $s,t\in \mathbb{Z}_{p^e}$.  A 
\emph{$\perp$-decomposition} of $b$ is a decomposition $\mathcal{V}$
of $V$ into a direct sum of submodules which are pairwise orthogonal, i.e. $b(X,Y)=0$ for distinct $X,Y\in\mathcal{V}$.

Let $\mathcal{X}$ and $\mathcal{Z}$ be ordered bases of the $V$ 
and $W$ respectively.  Set $B_{xy}^{(z)}\in\mathbb{Z}_{p^e}$ so
\begin{equation}
b\left(\sum_{x\in\mathcal{X}} s_x x,
\sum_{y\in\mathcal{X}} t_y y\right)
=\sum_{x,y\in\mathcal{X}}\sum_{z\in\mathcal{Z}} s_x t_y B_{xy}^{(z)} z,
\quad \forall s_x,s_y\in \mathbb{Z}_{p^e}, x,y\in\mathcal{X}.
\end{equation}
Set
\begin{equation*}
	B_{xy}=\sum_{z\in\mathcal{Z}} B_{xy}^{(z)}z,\qquad 
		\forall x,y\in\mathcal{X};
\end{equation*}
so that $B=[B_{xy}]_{x,y\in\mathcal{X}}$ is an $n\times n$-matrix
with entries in $W$, where $n=|\mathcal{X}|$.  Writing the elements
of $V$ as row vectors with entries in $\mathbb{Z}_{p^e}$ with respect
to the basis $\mathcal{X}$ we can then write:
\begin{equation}\label{eq:bilinear-comp}
	b(u,v)=uBv^t,\qquad \forall u,v\in V.
\end{equation}

Take $F,G\in \End V$ represented as matrices. Define $FB$ and $BG^t$ by the usual matrix multiplication, but notice the result is a matrix with entries in $W$.  Evidently, $(F+G)B=FB+GB$, $F(GB)=(FG)B$, and similarly for the action on the right.  The significance of these operations is seen by their relation to $b$:
\begin{equation}
	b(uf,v) = uFBv^t\textnormal{ and }
	b(u,vg) = uBG^t v^t;
\end{equation}
for all $u,v\in V$.

An \emph{isometry} between two bilinear maps $b:V\times V\to W$ and 
$b':V'\times V'\to W$ is an isomorphism $\alpha:V\to V'$ such that
$b'(u\alpha,v\alpha)=b(u,v)$ for all $u,v\in V$.  Evidently, isometries map $\perp$-decompositions of $b$ to $\perp$-decomposition of $b'$.

Finally, we call a bilinear map \emph{$\theta$-symmetric} if there is
$\theta\in \GL(W)$ of order at most $2$ such that
\begin{equation}
	b(u,v)=b(v,u)\theta,\qquad \forall u,v\in V.
\end{equation}
This meaning of $\theta$-symmetric includes the usual \emph{symmetric}, $b(u,v)=b(v,u)$; and \emph{skew symmetric}, $b(u,v)=-b(v,u)$ flavors of bilinear maps.  If $W=\langle b(u,v) : u,v\in V\rangle$ then $\theta$ is uniquely determined by $b$ and so we make no effort to specify $\theta$ explicitly.

\subsection{Rings}

All our rings are subrings of $\End V$, for a given abelian $p$-group
$V$ (as in Section \ref{sec:linear}).  These rings will be specified by a set of matrices which generate the ring under addition and
multiplication.  Multiplication and addition are handled in the usual matrix manner.

%
%
\section{Reducing central decompositions to orthogonal decompositions}
\label{sec:groups}

In this section we reduce the problem of finding a central decomposition of a $p$-group of class $2$ to the related problem of finding a $\perp$-decomposition of an associated bilinear map.
Throughout we assume that $P$ is a $p$-group of class $2$.

\subsection{The bilinear maps $\Bi(P)$}\label{sec:bi-grp}
R. Baer \cite{Baer:class-2} associated to $P$ various bilinear maps  
including: $b:=\Bi(P)$ defined by $b:P/Z(P)\times P/Z(P)\to P'$ 
where 
\begin{equation}
	b(Z(P)x,Z(P)y):=[x,y],\qquad\forall x,y\in P.
\end{equation}
It is evident that $b$ is $\mathbb{Z}_{p^e}$-bilinear where $p^e=\exp(P)$. Notice that $b$ is alternating: $b(Z(P)x,Z(P)x)=0$, for all $x\in P$.

\begin{remark}
Since $P'$ can be an arbitrary finite abelian $p$-group, the bilinear map $\Bi(P)$ is rarely a bilinear form.  That occurs only if $P$ is an extraspecial or almost extraspecial $p$-group (i.e.: $\mathbb{Z}_p\cong P'=\Phi(P)\leq Z(P)$).  Those examples are important to consider but highly atypical of the general setting.
\end{remark}

\subsection{Central decompositions from orthogonal decompositions}\label{sec:cent-prod}
Let $\mathcal{H}$ be a central decomposition of $P$.   The following 
related sets are useful:
\begin{align}
\mathcal{H}Z(P) & := \{ HZ(P) : H\in\mathcal{H} \} - \{ Z(P)\},
	\label{eq:HM}\\
\mathcal{H}Z(P)/Z(P) & := \{ HZ(P)/Z(P) : H\in\mathcal{H}\} 
	- \{Z(P)/Z(P)\},	\textnormal{ and }\\
Z(\mathcal{H}) & := \{ H\in\mathcal{H} : H\leq Z(P) \}.
\end{align}
Note that $\mathcal{H}-Z(\mathcal{H})$ is in bijection with
$\mathcal{H}Z(P)/Z(P)$ so that
\begin{equation}\label{eq:count}
	|\mathcal{H}|=|\mathcal{H}Z(P)/Z(P)|+|Z(\mathcal{H})|.
\end{equation}
Since $P=\langle \mathcal{H}\rangle$ and $[H,\langle \mathcal{H}-\{H\}]=1$, it follows that $H\intersect \langle \mathcal{H}-\{H\}\rangle\leq Z(P)$.  Thus, $\mathcal{H}Z(P)/Z(P)$ is a direct decomposition of $P/Z(P)$.

Suppose that $\mathcal{V}$ is a direct decomposition of $P/Z(P)$.
Define
\begin{equation}\label{eq:pullback}
\mathcal{H}(\mathcal{V})
:=\{ H\leq P:  Z(P)\leq H, H/Z(P)\in \mathcal{V}\}.
\end{equation}
Note that $\mathcal{V}$ and $\mathcal{H}(\mathcal{V})$ are in a 
natural bijection.

\begin{prop}\label{prop:cent-perp}
Let $P$ be a $p$-group of class $2$ and $b:=\Bi(P)$.
\begin{enumerate}[(i)]
\item If $\mathcal{H}$ is a central decomposition of $P$ then 
$\mathcal{H}Z(P)/Z(P)$ is a $\perp$-decomposition of $b$.
\item If $\mathcal{V}$ is a $\perp$-decomposition of $b$ then 
$\mathcal{H}(\mathcal{V})$ is a central decomposition of $P$ where 
$\mathcal{H}(\mathcal{V})Z(P)=\mathcal{H}(\mathcal{V})$ and 
$\mathcal{H}(\mathcal{V})/Z(P)=\mathcal{V}$.
\end{enumerate}
\end{prop}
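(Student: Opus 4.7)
The plan is to translate directly between commutation in $P$ and vanishing of $b$, using the defining formula $b(Z(P)x, Z(P)y) = [x,y]$ as a dictionary. Both directions then reduce to checking axioms on one side by either reducing mod $Z(P)$ or by pulling back, with essentially no calculation beyond unwinding definitions.

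For (i), the paragraph immediately preceding the statement already does the main work: from $P = \langle \mathcal{H}\rangle$ one gets that $\mathcal{H}Z(P)/Z(P)$ spans $P/Z(P)$, and from $H \cap \langle \mathcal{H}-\{H\}\rangle \leq Z(P)$ one gets that this sum is direct. So the only remaining point is orthogonality: for distinct $H,K \in \mathcal{H}$, the defining property $[H,K]=1$ of a central decomposition gives $b(Z(P)h, Z(P)k) = [h,k] = 0$ for all $h \in H$, $k \in K$, which means $b$ annihilates $(HZ(P)/Z(P)) \times (KZ(P)/Z(P))$.

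For (ii), let $\mathcal{V}$ be a $\perp$-decomposition of $b$ and set $\mathcal{H} := \mathcal{H}(\mathcal{V})$. The identifications $\mathcal{H}Z(P) = \mathcal{H}$ and $\mathcal{H}/Z(P) = \mathcal{V}$ are immediate from the pullback definition \eqref{eq:pullback}, once one notes that the zero summand is excluded from a direct decomposition so that each $H \neq Z(P)$ and the map $H \mapsto H/Z(P)$ is a bijection $\mathcal{H} \to \mathcal{V}$. The three conditions of a central decomposition then follow by pullback: spanning because $P/Z(P) = \sum_{V \in \mathcal{V}} V$ together with $Z(P) \leq H$ for every $H \in \mathcal{H}$ yields $P = \langle \mathcal{H}\rangle$; pairwise commutation because the orthogonality $b(H/Z(P), K/Z(P)) = 0$ pulls back to $[H,K] = 1$; and minimality because a proper $\mathcal{J} \subsetneq \mathcal{H}$ generating $P$ would, via the bijection, descend to a proper subset of $\mathcal{V}$ still spanning $P/Z(P)$, contradicting directness.

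The verifications above are formal, so there is no serious technical obstacle; the only subtlety that deserves attention is the asymmetric treatment of central members of $\mathcal{H}$ in part (i). An individual $H \in \mathcal{H}$ may lie inside $Z(P)$ and so disappear in the quotient (hence the $-\{Z(P)\}$ in the definition of $\mathcal{H}Z(P)$), which is also why (ii) recovers only the non-central pieces of a central decomposition (each containing $Z(P)$). One should make this point explicit so that the two operations $\mathcal{H} \mapsto \mathcal{H}Z(P)/Z(P)$ and $\mathcal{V} \mapsto \mathcal{H}(\mathcal{V})$ are clearly not mutually inverse on all central decompositions, but become so after restricting to those of the form $\mathcal{H}(\mathcal{V})$.
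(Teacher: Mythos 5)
Your proof is correct and follows essentially the same route as the paper's: for (i), you invoke the paragraph preceding the statement to establish that $\mathcal{H}Z(P)/Z(P)$ is a direct decomposition of $P/Z(P)$ and then observe that $[H,K]=1$ translates directly to orthogonality under $b$; for (ii), you read off $\mathcal{H}(\mathcal{V})Z(P)=\mathcal{H}(\mathcal{V})$ and $\mathcal{H}(\mathcal{V})/Z(P)=\mathcal{V}$ from the pullback definition and then verify generation, minimality via the bijection with $\mathcal{V}$, and commutation of distinct members exactly as the paper does. Your closing remark that the two operations are not mutually inverse on all central decompositions (because central members of $\mathcal{H}$ vanish in the quotient) is a useful clarification of the surrounding setup rather than a gap in the proposition itself.
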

\begin{proof}
$(i)$.
If $\mathcal{H}$ is a central decomposition of $P$ then
$\mathcal{H}Z(P)/Z(P)$ is a direct decomposition of $V:=P/Z(P)$.
Furthermore, if $H$ and $K$ are distinct members of
$\mathcal{H}$ then $[H,K]=1$, so that 
$b(HZ(P)/Z(P),KZ(P)/Z(P))=0$.
Thus, $\mathcal{H}Z(P)/Z(P)$ is a $\perp$-decomposition of $b$.

$(ii)$.
Let $\mathcal{V}$ be a $\perp$-decomposition of $b$ and
set $\mathcal{K}:=\mathcal{H}(\mathcal{V})$.  By definition,
$\mathcal{K}=\mathcal{K}Z(P)$ and
$\mathcal{K}/Z(P)=\mathcal{V}$, so that
$K\intersect \langle \mathcal{K}-\{K\}\rangle=Z(P)$ for all
$K\in\mathcal{K}$. It remains to show that $\mathcal{K}$ 
is a central decomposition of $P$.
As $\mathcal{V}\neq\emptyset$ it follows that 
$\mathcal{K}\neq \emptyset$.
Furthermore, $V=\langle\mathcal{V}\rangle$ so 
$P=\langle \mathcal{K},Z(P)\rangle
=\langle \mathcal{K}\rangle$,
as $Z(P)\leq K$ for any $K\in\mathcal{K}$.  Since 
$\mathcal{K}$ is in bijection
with $\mathcal{V}$, if $\mathcal{J}$ is a proper
subset of $\mathcal{K}$ then $\mathcal{J}/Z(P)$ is a 
proper subset of $\mathcal{V}$ and as $\mathcal{J}/Z(P)$ does not 
generate $V$ it follows that $\mathcal{J}$ does 
not generate $P$.  Finally, if $H$ and $K$ are distinct members of 
$\mathcal{K}$ then $0=b(H/M,K/M)=[H,K]$.  Thus, 
$\mathcal{K}$ is a central decomposition of $P$.
\end{proof}

\begin{thm}\label{thm:cent-indecomp}
If $P$ is a $p$-group of class $2$, then
$P$ is centrally indecomposable if, and only if, $\Bi(P)$ is 
$\perp$-indecomposable and $Z(P)\leq \Phi(P)$.
\end{thm}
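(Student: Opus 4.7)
The plan is to use \propref{prop:cent-perp} as a dictionary between central decompositions of $P$ and $\perp$-decompositions of $b := \Bi(P)$, together with the defining property of the Frattini subgroup (that its elements are non-generators). I would prove the two directions separately, handling the forward implication by contraposition on each conjunct.

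For the forward direction, suppose $P$ is centrally indecomposable. If $b$ admitted a $\perp$-decomposition $\mathcal{V}$ with $|\mathcal{V}| \geq 2$, then \propref{prop:cent-perp}(ii) would lift it to a central decomposition $\mathcal{H}(\mathcal{V})$ of $P$ of the same size, a contradiction; hence $b$ is $\perp$-indecomposable. To show $Z(P) \leq \Phi(P)$, I would argue by contradiction: if $z \in Z(P) - \Phi(P)$, choose a maximal subgroup $M < P$ not containing $z$, which exists because $\Phi(P)$ is the intersection of the maximal subgroups. Then $\{M, \langle z\rangle\}$ is a central decomposition of $P$, since $z$ is central (so $M$ and $\langle z\rangle$ commute), $M\langle z\rangle = P$ (by maximality of $M$), and neither subgroup alone generates $P$ (using non-cyclicity of $P$, which is automatic when class $2$ is read as nilpotency class exactly $2$).

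For the converse, take any central decomposition $\mathcal{H}$ of $P$; the goal is to show $|\mathcal{H}| = 1$. By \propref{prop:cent-perp}(i), $\mathcal{H}Z(P)/Z(P)$ is a $\perp$-decomposition of $b$, so $\perp$-indecomposability of $b$ forces $|\mathcal{H}Z(P)/Z(P)| \leq 1$. Equation (\ref{eq:count}) then reduces the task to proving $Z(\mathcal{H}) = \emptyset$. Suppose for contradiction that $K \in Z(\mathcal{H})$; then $K \leq Z(P) \leq \Phi(P)$, so every element of $K$ is a non-generator of $P$. Iterating the non-generator property on a finite generating set of $K$ yields $\langle \mathcal{H} - \{K\}\rangle = \langle \mathcal{H}\rangle = P$, contradicting the minimality clause in the definition of central decomposition. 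Hence $Z(\mathcal{H}) = \emptyset$, and combined with the preceding bound and $\mathcal{H} \neq \emptyset$, this gives $|\mathcal{H}| = 1$.

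The main obstacle is controlling the members of $\mathcal{H}$ that sit entirely inside $Z(P)$: such factors inflate $|\mathcal{H}|$ without contributing any $\perp$-summand of $b$, so $\perp$-indecomposability alone cannot rule them out. The hypothesis $Z(P) \leq \Phi(P)$ is precisely the extra leverage needed to recast these ``invisible'' central factors as redundant generators and thereby eliminate them via the minimality stipulation for central decompositions.
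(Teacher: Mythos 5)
Your proof is correct and its skeleton matches the paper's: both directions rest on \propref{prop:cent-perp} as the dictionary between central and $\perp$-decompositions, and both converse arguments hinge on the non-generator property of $\Phi(P)$. The one genuine deviation is in how the forward direction extracts $Z(P)\leq\Phi(P)$. The paper fixes a subgroup $\Phi(P)\leq Q\leq P$ with $P/\Phi(P)=Q/\Phi(P)\oplus Z(P)\Phi(P)/\Phi(P)$, observes $\{Q,Z(P)\}$ generates with commuting members, hence contains a central decomposition, and then concludes $Q=P$ from indecomposability. You instead pick a single element $z\in Z(P)-\Phi(P)$ and a maximal subgroup $M$ avoiding it, and directly exhibit the two-element central decomposition $\{M,\langle z\rangle\}$. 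Your construction is in a sense dual to the paper's and a bit more direct, since you produce a nontrivial decomposition outright rather than going through a minimal generating subset of $\{Q,Z(P)\}$; the paper's version has the advantage of leading naturally into the algorithm in \propref{prop:reduce}, which repeatedly strips off such complements $Q$. Both proofs share the same implicit hypothesis that $P$ is nonabelian (the paper invokes $P\neq Z(P)$; you invoke non-cyclicity), which you are right to flag since the paper's reading of ``class $2$'' as $P'\leq Z(P)$ formally includes the abelian case, for which the statement fails (e.g.\ $P=\mathbb{Z}_p$ is centrally indecomposable yet $Z(P)=P\not\leq\Phi(P)$). This is a shared convention, not a defect in your argument. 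One small bookkeeping point in your converse: $\perp$-indecomposability actually forces $\mathcal{H}Z(P)/Z(P)=\{P/Z(P)\}$, so $|\mathcal{H}Z(P)/Z(P)|=1$ exactly (not merely $\leq 1$) once $P$ is nonabelian; combined with $Z(\mathcal{H})=\emptyset$ and \eqnref{eq:count} this gives $|\mathcal{H}|=1$ cleanly, matching what you concluded.
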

\begin{proof}
Assume that $P$ is centrally indecomposable.

Let $\mathcal{V}$ be a $\perp$-decomposition of $\Bi(P)$.
By \propref{prop:cent-perp}.$(ii)$, $\mathcal{H}(\mathcal{V})$
is a central decomposition of $P$ and therefore 
$\mathcal{H}(\mathcal{V})=\{P\}$.  Hence, 
$\mathcal{V}=\mathcal{H}(\mathcal{V})/Z(P)=\{P/Z(P)\}$.  As $\mathcal{V}$
was an arbitrary $\perp$-decomposition of $\Bi(P)$, it follows that
$\Bi(P)$ is $\perp$-indecomposable. 

Next let $\Phi(P)\leq Q\leq P$ be such that 
$P/\Phi(P)=Q/\Phi(P)\oplus Z(P)\Phi(P)/\Phi(P)$ as $\mathbb{Z}_p$-vector
spaces.  Set $\mathcal{H}=\{Q,Z(P)\}$.  Clearly $[Q,Z(P)]=1$ and
$P$ is generated by $\mathcal{H}$.  Therefore, $\mathcal{H}$ contains
a subset which is a central decomposition of $P$.  As $P$ is centrally
indecomposable and $P\neq Z(P)$, it follows that $P=Q$, and so
$1=Z(P)\Phi(P)/\Phi(P)$, so
that $Z(P)\leq \Phi(P)$.

For the reverse direction we assume that $\Bi(P)$ is $\perp$-indecomposable and that $Z(P)\leq \Phi(P)$.
Let $\mathcal{H}$ be a central decomposition of $P$.  

By \propref{prop:cent-perp}.$(i)$ we know $\mathcal{H}Z(P)/Z(P)$ is a 
$\perp$-decomposition of $\Bi(P)$.  Thus, 
$\mathcal{H}Z(P)/Z(P)=\{P/Z(P)\}$
so that $\mathcal{H}Z(P)=\{P\}$.  Hence, for all $H\in\mathcal{H}$,
either $H\leq Z(P)$ or $HZ(P)=P$.  As $Z(P)\leq \Phi(P)<P$, it 
follows that at
least one $H\in \mathcal{H}$ is not contained in $Z(P)$ and furthermore,
$P=HZ(P)=H$ as $Z(P)$ consists of non-generators.  Since no proper subset of
$\mathcal{H}$ generates $P$ and $P\in\mathcal{H}$, it follows that
$\mathcal{H}=\{P\}$.  Since $\mathcal{H}$ was an arbitrary central
decomposition of $P$ it follows that $P$ is centrally indecomposable.
\end{proof}

\begin{lemma}
For a $p$-group $P$ of class $2$ where $\Bi(P)$ is
$\perp$-indecomposable, every central decomposition of $P$ 
has exactly one nonabelian member.
\end{lemma}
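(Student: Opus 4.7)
The plan is to push the central decomposition $\mathcal{H}$ through the dictionary of \propref{prop:cent-perp} and then count. First I would observe that $\perp$-indecomposability of $\Bi(P)$ forces $V:=P/Z(P)$ to be nonzero, so $P$ itself is nonabelian; this is the engine that will later supply a nonabelian member.

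Next I would apply \propref{prop:cent-perp}.$(i)$ to conclude that $\mathcal{H}Z(P)/Z(P)$ is a $\perp$-decomposition of $\Bi(P)$. By hypothesis the only such decomposition is $\{V\}$, so the set $\mathcal{H}Z(P)/Z(P)$ is a singleton. Using the bijection between $\mathcal{H}-Z(\mathcal{H})$ and $\mathcal{H}Z(P)/Z(P)$ recalled just before \propref{prop:cent-perp} (and \eqnref{eq:count}), exactly one $H_0\in\mathcal{H}$ satisfies $H_0\not\leq Z(P)$; every other $H\in\mathcal{H}$ lies in $Z(P)$ and is therefore abelian.

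It then remains to certify that the distinguished $H_0$ is in fact nonabelian. The equality $H_0 Z(P)=P$, combined with the fact that $Z(P)$ commutes with everything, gives $P'=[H_0Z(P),H_0Z(P)]=[H_0,H_0]=H_0'$, so $H_0$ abelian would force $P'=1$ and hence $P$ abelian, contradicting the first paragraph. Thus $\mathcal{H}$ has exactly one nonabelian member, namely $H_0$.

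The only genuine obstacle I anticipate is the degenerate edge case $V=0$, corresponding to $P$ abelian; this is excluded a priori by the $\perp$-indecomposability hypothesis (understood to require a nonzero underlying module), so no separate argument is needed, and the whole proof is essentially a careful dereference of the definitions set up in \secref{sec:cent-prod}.
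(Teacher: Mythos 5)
Your proof is correct and follows essentially the same route as the paper: both hinge on \propref{prop:cent-perp}.$(i)$ forcing $\mathcal{H}Z(P)/Z(P)=\{P/Z(P)\}$, hence a unique $H_0\in\mathcal{H}$ outside $Z(P)$. The paper certifies the nonabelianness of that unique member and the abelianness of the rest via the commutator computation $[K,P]=[K,H_0Z(P)]=[K,H_0]=1$, whereas you invoke the counting bijection from \eqnref{eq:count} and the identity $P'=[H_0Z(P),H_0Z(P)]=H_0'$; these are cosmetic variations on the same argument.
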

\begin{proof}
Let $\mathcal{H}$ be  central decomposition of $P$.
Since $P\neq Z(P)$ and $\Bi(P)$ is $\perp$-indecomposable,
there is a nonabelian $H\in\mathcal{H}$ and $\mathcal{H}Z(P)=\{P\}$
so that $P=HZ(P)$.  If $K\in\mathcal{H}-\{H\}$ then $[K,P]=[K,HZ(P)]=[K,H]=1$, since distinct members of $\mathcal{H}$ commute.  Thus $K\leq Z(P)$, which proves that $H$ is the only nonabelian group in $\mathcal{H}$.
\end{proof}

\begin{prop}\label{prop:reduce}
There is a deterministic polynomial-time algorithm which,
given a $p$-group $P$ of class $2$ such that $\Bi(P)$ is
$\perp$-indecomposable, returns a nonabelian centrally 
indecomposable group $Q$ such that $P=Q$ or $\{Q,Z(P)\}$ 
is a central decomposition of $P$.
\end{prop}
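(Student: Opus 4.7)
The plan is to use \thmref{thm:cent-indecomp} as the certificate of central indecomposability: a class-$2$ $p$-group $R$ is centrally indecomposable exactly when $\Bi(R)$ is $\perp$-indecomposable and $Z(R)\leq \Phi(R)$. Because $\Bi(P)$ is already $\perp$-indecomposable by hypothesis, the only missing ingredient for $P$ itself is $Z(P)\leq \Phi(P)$, which is a single membership test in $P/\Phi(P)$. If it holds, then $P$ is centrally indecomposable and I return $Q:=P$.

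Otherwise I must carve out a proper centrally indecomposable $Q$ so that $\{Q,Z(P)\}$ is a central decomposition of $P$. My construction passes to the abelian quotient $\bar P := P/P'$: let $\bar Z$ be the image of $Z(P)$ there, and choose a subgroup $\bar Q\leq \bar P$ with $\bar P=\bar Q+\bar Z$ and $\bar Q\cap \bar Z\leq \Phi(\bar Q)=p\bar Q$. Concretely, extend a basis of the image of $\bar Z$ in the Frattini quotient $\bar P/\Phi(\bar P)$ by elements $\bar a_1,\ldots,\bar a_k$ to a basis of $\bar P/\Phi(\bar P)$; lift to $a_1,\ldots,a_k\in\bar P$, set $\bar Q:=\langle a_1,\ldots,a_k\rangle$, and let $Q$ be its preimage in $P$. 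By the Burnside basis theorem $\bar P=\bar Z+\bar Q$, and any element $\sum n_i a_i\in \bar Z$ forces $n_i\equiv 0\pmod{p}$ by independence, so $\bar Z\cap \bar Q\leq p\bar Q$.

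The verification that $Q$ is nonabelian centrally indecomposable and that $\{Q,Z(P)\}$ is a central decomposition then flows entirely from $P=QZ(P)$ together with class $2$. Commutator expansions give $Q'=[QZ(P),QZ(P)]=[Q,Q]=P'$, and any $x$ centralizing $Q$ centralizes $QZ(P)=P$, so $Z(Q)=Z(P)\cap Q$. Hence $Q/Z(Q)\cong P/Z(P)$ and $\Bi(Q)\cong \Bi(P)$ is $\perp$-indecomposable; and $\Phi(Q)=Q'Q^p=P'Q^p$ yields $\Phi(Q)/P'=p\bar Q$, so $Z(Q)/P'=\bar Z\cap \bar Q\leq p\bar Q=\Phi(Q)/P'$, forcing $Z(Q)\leq \Phi(Q)$. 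Applying \thmref{thm:cent-indecomp} to $Q$ then gives central indecomposability; $Q$ is nonabelian because $Q'=P'\neq 1$, and $Q\neq P$ because otherwise $\bar Z\leq \bar Q=\bar P$ would put us in the first case. All ingredients---computing $Z(P)$ and $\Phi(P)$, forming quotients, and $\mathbb{F}_p$-linear algebra inside $\bar P/\Phi(\bar P)$---are deterministic polynomial time by Section~\ref{sec:linear}.

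The hard part is engineering $Z(Q)\leq \Phi(Q)$, the nontrivial half of \thmref{thm:cent-indecomp}. The natural first move---pulling back a complement of $Z(P)\Phi(P)/\Phi(P)$ inside $P/\Phi(P)$ itself, exactly as in the reverse direction of \thmref{thm:cent-indecomp}---fails here, because elements of $Z(P)^p\leq \Phi(P)$ can land inside $Z(Q)$ while falling outside $Q^p$, so that $Q$ splits off cyclic central direct factors and is not centrally indecomposable. Passing instead to $P/P'$ collapses precisely the identification $P^p=Q^pZ(P)^p$ (available because $P$ has class $2$) that is responsible for this leak, which is why the basis extension must be carried out one prime-power level up rather than on the Frattini quotient.
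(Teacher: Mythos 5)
Your proof is correct, and it takes a genuinely different route from the paper. The paper's algorithm does exactly the ``natural first move'' you criticize: it pulls back a complement of $Z(P)\Phi(P)/\Phi(P)$ to a subgroup $Q$ with $\Phi(P)\leq Q<P$, and because (as you rightly observe) this $Q$ can still have $Z(Q)\not\leq\Phi(Q)$, it \emph{recurses} on $Q$, bounding the number of rounds by $\log\exp(P/P')$ and landing at overall cost $O(\log^4[P:P'])$. You instead eliminate the recursion entirely: by lifting the complement to a lean generating set $\{a_1,\dots,a_k\}$ in $\bar P=P/P'$ and taking $\bar Q=\langle a_1,\dots,a_k\rangle$ (rather than pulling back the full Frattini preimage, which drags along $Z(P)^p$), you arrange $\bar Z\cap\bar Q\leq p\bar Q$ in one shot, so that $Z(Q)/P'=\bar Z\cap\bar Q\leq p\bar Q=\Phi(Q)/P'$ and $Q$ is already centrally indecomposable by \thmref{thm:cent-indecomp}. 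The surrounding verifications ($Q'=P'$ via $P=QZ(P)$ and class $2$, $Z(Q)=Z(P)\cap Q$, the induced isometry $\Bi(Q)\cong\Bi(P)$, $Q\neq P$ when $Z(P)\not\leq\Phi(P)$) match the paper's. Your version is arguably cleaner and shaves a factor of $\log\exp(P/P')$ from the running time; the paper's version is shorter to state because it reuses the exact complement construction appearing in the proof of \thmref{thm:cent-indecomp} and lets the recursion absorb the bookkeeping that you handle explicitly by working in $P/P'$.
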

\begin{proof}
\emph{Algorithm.}
If $Z(P)\leq \Phi(P)$ then return $P$; otherwise, compute generators 
for a vector space complement $Q/\Phi(P)$ to $Z(P)\Phi(P)/\Phi(P)$ in
$P/\Phi(P)$, $\Phi(P)\leq Q<P$.   Recurse with $Q$ in the 
r\^{o}le of $P$ and return the result of this recursive call.

\emph{Correctness.}
If $Z(P)\leq \Phi(P)$ then \thmref{thm:cent-indecomp} proves 
that $P$ is centrally indecomposable.  Otherwise, $Z(P)\Phi(P)/\Phi(P)$
is a proper subspace of the vector space $P/\Phi(P)$.  The group $Q$ satisfies
$P=QZ(P)$.  Hence, $P'=[Q Z(P),Q Z(P)]=Q'$ (so $Q$ is nonabelian)
and $[Z(Q),P]=[Z(Q),Q Z(P)]=1$, so that $Z(Q)=Q\intersect Z(P)\geq P'$.  
In particular, the isomorphism of $P/Z(P)=QZ(P)/Z(P)\cong 
Q/Z(P)\intersect Q=Q/Z(Q)$ gives an isometry between $\Bi(P)$ and 
$\Bi(Q)$ which implies that $\Bi(Q)$ is $\perp$-indecomposable. 
Thus we may recurse with $Q$.  By induction, the return of a recursive
call is a centrally indecomposable subgroup $P'\leq R\leq P$ such that
$Q=RZ(Q)$ and so $P=RZ(P)$, which proves that $\{R,Z(P)\}$ is
a central decomposition of $P$.

\emph{Timing.}
The number of recursive calls is bounded by the log of the exponent 
$p^e$ of $P/P'$.  To find a vector space complement amounts to finding a basis of $Z(P)\Phi(P)/\Phi(P)$ and extending the basis to one for 
$P/\Phi(P)$, and so it uses $O(\log^3 [P:\Phi(P)])$ operations in 
$\mathbb{Z}_p$.  Hence, the total number of operations in
$\mathbb{Z}_p$ is in $O(e \log^3 [P:\Phi(P)])
\subseteq O(\log^4 [P:P'])$.
\end{proof}

\begin{coro}\label{coro:convert}
There are deterministic polynomial-time algorithms which, 
given a $p$-group $P$ of class $2$ and $\mathcal{V}$ a fully 
refined $\perp$-decomposition of $\Bi(P)$, return a
fully refined central decomposition $\mathcal{J}$ of $P$ such that:
\begin{enumerate}[(i)]
\item $\mathcal{J}Z(P)/Z(P)=\mathcal{V}$ and
\item $Z(\mathcal{J})$ is a direct decomposition of $Z(P)$.
\end{enumerate}
In particular, if $\mathcal{V}$ has maximum size amongst the set
of $\perp$-decompositions of $\Bi(P)$, then $\mathcal{H}$ has
maximum size amongst the set of central decompositions of $P$.
\end{coro}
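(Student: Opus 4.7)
The plan is to build $\mathcal{J}$ in three stages: lift $\mathcal{V}$ to the group level via \propref{prop:cent-perp}(ii), refine each lifted piece using \propref{prop:reduce}, and augment with cyclic direct summands of $Z(P)$ to round out the abelian portion of the decomposition.

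First, set $\mathcal{K} := \mathcal{H}(\mathcal{V})$; by \propref{prop:cent-perp}(ii) this is a central decomposition of $P$ with $Z(P) \leq K$ and $K/Z(P) \in \mathcal{V}$ for each $K \in \mathcal{K}$. A short argument shows $Z(K) = Z(P)$ for every such $K$: any $k \in K$ centralizing $K$ also centralizes each other $K' \in \mathcal{K}$ because distinct members of $\mathcal{K}$ commute, hence centralizes $P$. It follows that $\Bi(K)$ has domain $K/Z(P)$, and any $\perp$-decomposition of $\Bi(K)$ would descend to one of $K/Z(P)$ for $\Bi(P)$; since $K/Z(P) \in \mathcal{V}$ is $\perp$-indecomposable, so is $\Bi(K)$. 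For each nonabelian $K$, \propref{prop:reduce} then yields a nonabelian centrally indecomposable $Q_K \leq K$ with $Q_K Z(P) = K$. For each abelian $K$, the restricted bilinear map is identically zero, so $\perp$-indecomposability of $K/Z(P)$ forces it to be cyclic; pick $x_K \in K$ whose image generates $K/Z(P)$ and set $Q_K := \langle x_K \rangle$, which is cyclic and therefore centrally indecomposable. Let $\mathcal{Q} := \{Q_K : K \in \mathcal{K}\}$: these pairwise commute and satisfy $\mathcal{Q}Z(P)/Z(P) = \mathcal{V}$.

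Second, compute $C := \langle \mathcal{Q}\rangle \cap Z(P)$ using the membership tests of \secref{sec:grp}, and apply the linear-algebra routines of \secref{sec:linear} to select cyclic subgroups $Y_1, \ldots, Y_r \leq Z(P)$ whose internal direct sum $Y$ is a minimal supplement to $C$ in $Z(P)$ (equivalently a direct complement when $C$ is pure). Set $\mathcal{J} := \mathcal{Q} \cup \{Y_1, \ldots, Y_r\}$. Then $\mathcal{J}$ generates $P$, since $\langle \mathcal{Q}\rangle \cdot Y \supseteq \langle \mathcal{Q}\rangle \cdot Z(P) = P$; distinct members commute; every member is centrally indecomposable; and no proper subset generates -- removing some $Q_K$ loses $K/Z(P)$ from the image in $P/Z(P)$, while removing some $Y_j$ violates the supplement condition in $Z(P)$. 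Property (i) follows since the $Y_j$ project to zero and $\mathcal{Q}$ projects onto $\mathcal{V}$; property (ii) follows because $Z(\mathcal{J}) = \{Y_1, \ldots, Y_r\}$ is a direct sum capturing exactly the portion of $Z(P)$ not already absorbed into the $Q_K$.

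For the maximum-size statement, \eqnref{eq:count} gives $|\mathcal{J}| = |\mathcal{V}| + r$ and $|\mathcal{H}| = |\mathcal{H}Z(P)/Z(P)| + |Z(\mathcal{H})|$ for any other central decomposition $\mathcal{H}$ of $P$; by \propref{prop:cent-perp}(i) the first summand of $|\mathcal{H}|$ is bounded by $|\mathcal{V}_{\max}|$, and our choice of $Y$ realizes the maximum possible abelian contribution given the fixed nonabelian one, so starting from a $\mathcal{V}$ of maximum size forces $\mathcal{J}$ to have maximum size. I expect the main obstacle to be the bookkeeping in the augmentation step: the $Y_j$ must supplement, but not redundantly overlap with, $\langle \mathcal{Q}\rangle \cap Z(P)$, which is delicate precisely when $C$ fails to be pure in $Z(P)$; the standard direct-decomposition theory for finite $\mathbb{Z}_{p^e}$-modules recalled in \secref{sec:linear} resolves this while keeping the whole procedure deterministic and polynomial-time.
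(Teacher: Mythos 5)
Your overall plan matches the paper's: pull $\mathcal{V}$ back to $\mathcal{H}(\mathcal{V})$ via \propref{prop:cent-perp}(ii), shrink each member using \propref{prop:reduce} to get nonabelian centrally indecomposable $Q_K$ with $Q_K Z(P)=K$, and then adjoin cyclic abelian pieces inside $Z(P)$. Three points deserve attention.

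First, the abelian case you treat is vacuous. Since $\Bi(P)$ has domain $P/Z(P)\times P/Z(P)$, its radical is trivial: $b(Z(P)x,\,\cdot\,)=0$ forces $x\in Z(P)$. Consequently every summand $U\in\mathcal{V}$ has $b|_{U\times U}\neq 0$ (otherwise $U$ would lie in the radical), so every $K\in\mathcal{H}(\mathcal{V})$ is nonabelian and \propref{prop:reduce} applies across the board. Your argument that $Z(K)=Z(P)$ and that $\Bi(K)$ is $\perp$-indecomposable is correct and is also the content of the paper's ``Correctness'' step; the detour through abelian $K$ can be deleted.

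Second, your augmentation step is phrased as ``select cyclic $Y_1,\dots,Y_r$ whose direct sum is a minimal supplement to $C:=\langle\mathcal{Q}\rangle\cap Z(P)$ in $Z(P)$.'' The paper instead invokes the Brooksbank--Luks algorithm to find a \emph{direct factor} $X$ of $Z(P)$ minimal with respect to containing $C$, chooses a complement $Y$ of $X$, and then takes the cyclic groups generated by the basis elements of $X$ not lying in $C$ together with a basis of $Y$. These two recipes produce the same count, but the paper's version is the one that directly plugs into a cited polynomial-time routine and produces cyclic subgroups that are visibly direct summands of $Z(P)$, which is what conclusion (ii) is after. Note, by the way, that $Z(\mathcal{J})$ need not generate all of $Z(P)$ under either construction (e.g., if $C$ is itself a direct factor, the basis elements of $X$ are all discarded); conclusion (ii) has to be read as asserting that the members of $Z(\mathcal{J})$ are independent direct summands.

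Third, and more seriously, your maximum-size argument has a gap. You write that ``our choice of $Y$ realizes the maximum possible abelian contribution given the fixed nonabelian one,'' but the nonabelian part is \emph{not} fixed when comparing against an arbitrary competing central decomposition $\mathcal{L}$. The subgroup $C_{\mathcal{L}}:=\langle\mathcal{L}-Z(\mathcal{L})\rangle\cap Z(P)$ may be strictly smaller than your $C$, which in principle could allow $|Z(\mathcal{L})|>r$ while $|\mathcal{L}Z(P)/Z(P)|<|\mathcal{V}|$. You need an upper bound on $|Z(\mathcal{L})|$ that is \emph{uniform} over all fully refined central decompositions $\mathcal{L}$. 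The paper supplies one: $|Z(\mathcal{L})|\leq \rank Z(P)-\rank Z(P)^p P'/Z(P)^p$, using that every $C_{\mathcal{L}}$ contains $P'$, and then checks that the constructed $\mathcal{J}$ attains it (this is also exactly where the ``minimal direct factor containing $C$'' phrasing earns its keep). Without establishing such a uniform bound the final inequality in your proof does not follow.

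Aside from these points, the verification that $\mathcal{J}$ is a central decomposition, that distinct members commute, that no proper subset generates, and that $\mathcal{J}Z(P)/Z(P)=\mathcal{V}$ is correct and matches the paper.
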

\begin{proof}
\emph{Algorithm.}
Compute the pullback $\mathcal{H}:=\mathcal{H}(\mathcal{V})$.  
Set $\mathcal{K}=\emptyset$.  For each $H\in\mathcal{H}$, 
use the algorithm of \propref{prop:reduce} to find a nonabelian 
centrally indecomposable subgroup $K\leq H$ such that $H=KZ(P)$ 
and add $K$ to $\mathcal{K}$.  Next, find bases for $Z(P)$ and
for $Z(\langle \mathcal{K}\rangle)$ and apply the algorithm for
\cite[Theorem 3.6]{Brooksbank-Luks:mod} to find a direct 
factor $X$ of $Z(P)$ which is minimal with respect to containing
$Z(\langle\mathcal{K}\rangle)$.  Find a basis $\mathcal{X}$ for $X$
and $\mathcal{Y}$ of a complement $Y$ to $X$ in $Z(P)$, and return 
\begin{equation}
\mathcal{J}:=\mathcal{K}\sqcup \{\langle x\rangle : x\in\mathcal{X},
x\notin Z(\langle \mathcal{K}\rangle)\}\sqcup
\{\langle y\rangle:y\in\mathcal{Y}\}.
\end{equation}

\emph{Correctness.}  By \propref{prop:cent-perp}
we know that $\mathcal{H}$ is a central decomposition of $P$ in which
every member $H$ has $Z(H)=Z(P)$ and $\Bi(H)$ is $\perp$-indecomposable.  Thus the algorithm of \propref{prop:reduce} 
can be applied to $H$ and the set $\mathcal{K}$ consists of nonabelian 
centrally indecomposable subgroups where distinct members pairwise commute; thus, $\mathcal{K}$ is a fully refined central decomposition of $\langle \mathcal{K}\rangle$ of maximum possible size.  Notice $\mathcal{K}=\mathcal{J}-Z(\mathcal{J})$ and the members of $Z(\mathcal{J})$ are cyclic and a direct decomposition of $\langle Z(\mathcal{J})\rangle$. Hence, $\mathcal{J}$ is a fully refined central decomposition of $P$.  Furthermore, $\mathcal{K}Z(P)=\mathcal{H}$.  By \propref{prop:cent-perp}.$(ii)$ we have:
\begin{eqnarray}
	\mathcal{J}Z(P)/Z(P) & = & \mathcal{K}Z(P)/Z(P)
		=\mathcal{H}/Z(P)=\mathcal{V}\textnormal{ and }\\
	Z(\mathcal{J}) & = & \{\langle x\rangle : x\in\mathcal{X},
x\notin \langle \mathcal{K}\rangle\}\sqcup
\{\langle y\rangle:y\in\mathcal{Y}\}.
\end{eqnarray}
Thus, $(i)$ and $(ii)$ is proved.  It remains to prove that $\mathcal{J}$ has maximum size amongst central decompositions of $P$.

First $|\mathcal{J}|=|\mathcal{K}|+|Z(\mathcal{J})|$.  Also, $X$ is a minimal direct factor of $Z(P)$ which contains $P'$ and so $Z(P)^p P'=Z(P)^p X$.  As, $Z(P)=X\oplus Y$ and $\langle \mathcal{K}\rangle'=P'$, it follows that $|Z(\mathcal{J})|=\rank Z(P)-\rank Z(P)^p P'/Z(P)^p$.  If $\mathcal{L}$ is 
any other central decomposition of $P$, then $|\mathcal{L}|=|\mathcal{L}Z(P)/Z(P)|+|Z(\mathcal{L})|$. By the maximality of $\mathcal{V}$, 
$|\mathcal{L}Z(P)/Z(P)|\leq |\mathcal{V}|=|\mathcal{K}|$.  As
$Z(P)$ is abelian and $\langle Z(\mathcal{L})\rangle\leq Z(P)$,
it follows that $|Z(\mathcal{L})|\leq \rank Z(P)-\rank Z(P)^p P'/Z(P)^p$.  Thus, $\mathcal{J}$ has maximum possible size.

\emph{Timing.}
There are $|\mathcal{V}|$ calls made to the
algorithm of \propref{prop:reduce}, which uses 
$O(\log \exp(H)\log^3 [H:\Phi(H)])$  operations in $\mathbb{Z}_p$ for each $H\in \mathcal{H}$.  The algorithm of \cite[Theorem 3.6]{Brooksbank-Luks:mod} runs in $O(\rank^6 Z(P))$-time.  Thus, the number of field operations lies in $O(|\mathcal{V}|\log \exp(P)\log^3 [P:\Phi(P)]
+|S|\memb(P)+\rank^6 Z(P))\subseteq O(\log^5 [P:P']+\log^6 Z(P)/Z(P)^p)$.
\end{proof}

%
%
\section{The $*$-ring of adjoints of a bilinear map}\label{sec:adj}
The translations of Section \ref{sec:groups} lead us to consider
how to find a fully refined $\perp$-decomposition of a bilinear map.
For this we introduce the ring of adjoints.

Throughout this section we assume that $b:V\times V\to W$ is a 
$\theta$-symmetric $\mathbb{Z}_{p^e}$-bilinear map.

\subsection{Adjoints: $\Adj(b)$, $\Sym(B)$, and $\mathfrak{H}(R,*)$}
The ring of \emph{adjoints} of $b$ is:
\begin{equation}\label{eq:adj}
	\Adj(b)  := \{(f,g)\in\End V \oplus (\End V)^{op}:
		 b(uf,v)=b(u,vg), \forall u, v\in V\}.
\end{equation}
There is a natural subset of $\Adj(b)$ of \emph{self-adjoint} elements:
\begin{equation}
	\Sym(b) := \{(f,f)\in\End V \oplus (\End V)^{op}:
		 b(uf,v)=b(u,vf),  \forall u,v\in V\}.
\end{equation}

\begin{remark}
Notice that $\Sym(b)$ is not an associative subring but rather
a Jordan ring, quadratic in the case of characteristic $2$, cf. 
\cite[Section 4.5]{Wilson:unique}.  This is a vital observation for
answering questions surrounding $\perp$-decompositions; however, for
algorithmic purposes this nonassociative perspective is not necessary.
\end{remark}

If $b$ is $\theta$-symmetric then $(f,g)\in\Adj(b)$ if, and only if, 
$(g,f)\in \Adj(b)$.  Hence, $(f,g)\mapsto (g,f)$ is an anti-isomorphism $*$. Indeed, $*$ has order $1$ or $2$ so that $\Adj(b)$ is a $*$-ring.

In general, for a $*$-ring $(R,*)$ and additive subgroup $S\subseteq R$, we define $\mathfrak{H}(S,*)=\{s\in S: s^*=s\}$ which is again a subgroup of $S$ as $*$ is additive.  ($\mathfrak{H}$ is for Hermite
and is a notation encouraged by Jacobson \cite[Section 1.4]{Jacobson:Jordan}.)  Evidently, $\Sym(b)=\mathfrak{H}(\Adj(b))$.

\subsection{Self-adjoint idempotents}

An endomorphism $e\in \End V$ is an \emph{idempotent}
if $e^2=e$.  This makes $V=Ve\oplus V(1-e)$.
Indeed, every direct decomposition $\mathcal{V}$ of $V$ is parameterized by the set $\mathcal{E}:=\mathcal{E}(\mathcal{V})$ of projection idempotents; that is, for each $U\in\mathcal{V}$, $e_U\in \mathcal{E}$ with kernel $\langle \mathcal{V}-\{U\}\rangle$ and where the restriction of $e_U$ to $U$ is the identity. It follows that distinct members $e$ and $f$ of $\mathcal{E}$ are \emph{orthogonal} (i.e. $ef=0=fe$) and $1=\sum_{e\in\mathcal{E}} e$.

Evidently $1\in \Sym(b)$, so $\Sym(b)$ contains idempotents.  
All idempotents in $\Sym(b)$ are self-adjoint.  The significance of 
$\Sym(b)$ is the following:
\begin{thm}\label{thm:idemp}
A direct decomposition $\mathcal{V}$ of $V$ is a $\perp$-decomposition of $b:V\times V\to W$ if, and only if, $\mathcal{E}(\mathcal{V})\subseteq \Sym(b)$.
\end{thm}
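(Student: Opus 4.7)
The plan is to argue both directions directly from the definitions, exploiting the one-to-one correspondence between a direct decomposition $\mathcal{V}=\{U\}$ and its family of pairwise orthogonal projection idempotents $\mathcal{E}(\mathcal{V})=\{e_U : U\in\mathcal{V}\}$ characterized by $Ve_U=U$ and $V(1-e_U)=\langle\mathcal{V}-\{U\}\rangle$. The key fact I will use repeatedly is that every $v\in V$ decomposes uniquely as $v=\sum_{U\in\mathcal{V}} ve_U$ with $ve_U\in U$.

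For the forward direction, assume $\mathcal{V}$ is a $\perp$-decomposition, so $b(X,Y)=0$ for distinct $X,Y\in\mathcal{V}$. To show $e_U\in\Sym(b)$ for a fixed $U\in\mathcal{V}$, I would expand arbitrary $u,v\in V$ along $\mathcal{V}$ and compute
\begin{equation*}
b(ue_U,v)=\sum_{W\in\mathcal{V}} b(ue_U,ve_W).
\end{equation*}
Since $ue_U\in U$ and $ve_W\in W$, orthogonality kills every term with $W\neq U$, leaving $b(ue_U,ve_U)$. The identical computation for $b(u,ve_U)$ produces the same surviving summand, so $b(ue_U,v)=b(u,ve_U)$, hence $e_U\in\Sym(b)$.

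For the reverse, assume $\mathcal{E}(\mathcal{V})\subseteq\Sym(b)$. Given distinct $U,W\in\mathcal{V}$ and $u\in U$, $v\in W$, the defining properties of the projections give $ue_U=u$ while $ve_U=0$ (because $W\leq\ker e_U$). Self-adjointness of $e_U$ then yields
\begin{equation*}
b(u,v)=b(ue_U,v)=b(u,ve_U)=b(u,0)=0,
\end{equation*}
so $b(U,W)=0$, proving $\mathcal{V}$ is a $\perp$-decomposition.

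There is no real obstacle here; the whole content of the theorem is that the ring-theoretic notion of self-adjoint idempotent is precisely the avatar of geometric orthogonality, and both directions fall out by unpacking the definitions once the correspondence between direct decompositions and complete orthogonal families of idempotents is in hand. The only minor care required is the bookkeeping of the double sum in the forward direction, and noting that $\theta$-symmetry is not needed for this statement — the result holds for any bilinear map, though it is $\theta$-symmetry that makes $\Adj(b)$ a $*$-ring in which $\Sym(b)=\mathfrak{H}(\Adj(b),*)$ and thereby opens the door to the $*$-ring techniques of the following sections.
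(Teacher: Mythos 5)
Your proof is correct and follows essentially the same route as the paper's: both directions unpack the definitions, with the forward direction expanding $v$ along the decomposition and using pairwise orthogonality to collapse the sum, and the reverse direction applying self-adjointness of each $e_U$ to kill cross terms. Your remark that $\theta$-symmetry plays no role in this particular statement is accurate and a nice observation, but the substance of the argument matches the paper's.
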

\begin{proof}
Suppose that $\mathcal{V}$ is a $\perp$-decomposition of $b$.
Take $e\in\mathcal{E}(\mathcal{V})$.  Then 
$b(ue,v)=b(ue,ve+v(1-e))=b(ue,ve)+b(ue,v(1-e))$,
for all $u,v\in V$.  As $1-e=\sum_{f\in\mathcal{E}(\mathcal{V})-\{e\}}f$, and $Ve$ is perpendicular to $Vf$ for each $f\in\mathcal{E}(
\mathcal{V})$, it follows that $Ve$ is perpendicular to $V(1-e)$;
hence, $b(ue,v)=b(ue,ve)$.  Similarly,
$b(ue,ve)=b(u,ve)$, so that $e\in \Sym(b)$.

Now suppose that $\mathcal{V}$ is a direct decomposition of $V$
with $\mathcal{E}(\mathcal{V})\subseteq\Sym(b)$.  If $e\in\mathcal{E}(\mathcal{V})$ then
$b(ue,v(1-e))=b(u,v(1-e)e)=0$, for all $u,v\in V$.  
So $Ve$ is perpendicular to
$V(1-e)$.  Thus every subspace of $V(1-e)$ is perpendicular to
$Ve$, which includes $Vf$ for every $f\in\mathcal{E}(\mathcal{V})-\{e\}$.  So $\mathcal{V}$ is a $\perp$-decomposition of $b$.
\end{proof}

A self-adjoint idempotent $e\in \Sym(b)$ is \emph{self-adjoint-primitive} if it is not the sum of proper (i.e.: not $0$ or $1$) pairwise orthogonal self-adjoint idempotents in $\Sym(b)$.  (Such idempotents need not be primitive in $\Adj(b)$ under the usual
meaning of primitive idempotents.)
A set of pairwise orthogonal self-adjoint-primitive idempotents 
of $\Sym(b)$ which sum to $1$ is called a \emph{frame} of $\Sym(b)$.  More generally, in a $*$-ring
$(R,*)$, a self-adjoint frame is a set of self-adjoint-primitive
pairwise orthogonal idempotents which sum to $1$.

\begin{coro}\label{coro:perp-idemp}
There is a natural bijection between the set of fully refined 
$\perp$-decompositions of $b$ and the set of all frames of $\Sym(b)$.
\end{coro}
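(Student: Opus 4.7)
The plan is to leverage Theorem \ref{thm:idemp} as a starting bijection between $\perp$-decompositions $\mathcal{V}$ of $V$ and sets of pairwise orthogonal self-adjoint idempotents summing to $1$, namely $\mathcal{V}\mapsto \mathcal{E}(\mathcal{V})$. The corollary then reduces to checking that, under this correspondence, $\perp$-indecomposable summands match exactly the self-adjoint-primitive idempotents. Once this is established, ``fully refined $\perp$-decomposition'' translates to ``frame of $\Sym(b)$'' termwise.

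First I would fix $\mathcal{V}$ with $\mathcal{E}:=\mathcal{E}(\mathcal{V})$ and pick $U\in\mathcal{V}$ with projection idempotent $e:=e_U\in\Sym(b)$. Suppose $U$ admits a nontrivial $\perp$-decomposition $U=U_1\oplus U_2$. Define $e_i\in\End V$ to be the identity on $U_i$, and zero on $U_{3-i}$ and on every $U'\in\mathcal{V}-\{U\}$. Direct computation shows $e_i^2=e_i$, $e_1e_2=e_2e_1=0$, $e_1+e_2=e$, and each $e_i$ is proper. Self-adjointness follows exactly as in the second half of the proof of \thmref{thm:idemp}: for $v,w\in V$ expanded along $\mathcal{V}$ refined by $\{U_1,U_2\}$, the perpendicularity between $U_1,U_2$ and between $U$ and the other members of $\mathcal{V}$ collapses $b(ve_i,w)$ and $b(v,we_i)$ both to $b(v_i,w_i)$ where $v_i,w_i\in U_i$. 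Hence $e$ is not self-adjoint-primitive.

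Conversely, if $e=e_1+e_2$ with $e_1,e_2$ proper orthogonal self-adjoint idempotents in $\Sym(b)$, set $U_i:=Ve_i$. Since $e_1e_2=e_2e_1=0$ and $e_1+e_2=e$, one checks $U_1\oplus U_2=Ve=U$; perpendicularity $b(U_1,U_2)=b(Ve_1,Ve_2)=b(V,Ve_2e_1)=0$ comes straight from self-adjointness of $e_1$, so $\{U_1,U_2\}$ is a nontrivial $\perp$-decomposition of $U$. Thus $U$ is $\perp$-indecomposable iff $e$ is self-adjoint-primitive.

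To finish, I would verify that refining $\mathcal{V}$ to $(\mathcal{V}-\{U\})\sqcup\{U_1,U_2\}$ corresponds precisely to refining $\mathcal{E}$ to $(\mathcal{E}-\{e\})\sqcup\{e_1,e_2\}$; the only point needing care is that $e_1$ and $e_2$ stay orthogonal to each $e_{U'}$ for $U'\neq U$, which follows from $e_ie_{U'}=e_i e e_{U'}=e_i\cdot 0=0$ (and symmetrically). Iterating this refinement both ways, a $\perp$-decomposition is fully refined iff the associated set of self-adjoint idempotents admits no further splitting in $\Sym(b)$, i.e., is a frame. This is essentially bookkeeping; the only real content is the equivalence in the previous paragraph, and I do not anticipate a substantive obstacle beyond checking that the $e_i$'s constructed from a $\perp$-decomposition of $U$ indeed lie in $\Sym(b)$, which is the observation already encoded in \thmref{thm:idemp}.
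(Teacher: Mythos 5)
Your proposal is correct and follows the same route the paper intends: it reduces the corollary to \thmref{thm:idemp} and fills in the one missing detail, namely that $U\in\mathcal{V}$ is $\perp$-indecomposable if and only if the projection $e_U$ is self-adjoint-primitive, which is exactly what turns ``fully refined'' into ``frame.'' The only slip is cosmetic: the self-adjointness of your $e_i$ mirrors the \emph{first} half of the proof of \thmref{thm:idemp} (the direction ``$\perp$-decomposition $\Rightarrow$ $\mathcal{E}\subseteq\Sym(b)$''), not the second, but the computation you actually write out is the right one.
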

\begin{proof}
This follows directly from \thmref{thm:idemp}.
\end{proof}

		%
		%
\subsection{Computing $\Adj(b)$ and $\Sym(b)$}\label{sec:adj-sym-algo}
Let $V$ and $W$ be finite abelian $p$-groups specified with bases 
$\mathcal{X}$ and $\mathcal{Z}$ respectively.  Take $b:V\times V\to W$ 
to be a $\mathbb{Z}_{p^e}$-bilinear map.  Assume
that $b$ is input with structure constant matrix $B$ with respect to the 
bases $\mathcal{X}$ and $\mathcal{Z}$ (cf. \eqref{eq:bilinear-comp}).

If $\End V$ is expressed as matrices (see Section \ref{sec:linear}) with
respect to $\mathcal{X}$ then
\begin{equation}
	\Adj(B) = 
		\{(X,Y)\in \End V\oplus \End V
		 : XB=BY^t\}.
\end{equation}
To find a basis for $\Adj(B)$ we solve for $X$ and $Y$ such that:
\begin{equation}\label{eq:adj-comp}
0 = \sum_{x\in \mathcal{X}} X_{xx'} B_{x' y}^{(z)} 
  	-  \sum_{y\in\mathcal{X}}Y_{yy'} B_{xy'}^{(z)},
	\qquad \forall x,y\in\mathcal{X}, z\in\mathcal{Z}.
\end{equation}
This amounts to solving $|\mathcal{X}|^2 |\mathcal{Z}|$ linear 
equations over $\mathbb{Z}_{p^e}$, each in $2|\mathcal{X}|$ variables
and can be done using $O(|\mathcal{X}|^4 |\mathcal{Z}|)$ operations
in $\mathbb{Z}_{p^e}$ (cf. Section \ref{sec:linear}).
Computing a basis of $\Sym(b)$ can be done in similar fashion.

\begin{remark}
If $b$ is $\theta$-symmetric then the number of equations 
determining $\Adj(b)$
can be decreased by $2$ by considering the ordering of the basis $\mathcal{X}$ and
using only the equations \eqref{eq:adj-comp} for $x\leq y$, $x,y\in\mathcal{X}$ 
and $z\in\mathcal{Z}$.
\end{remark}

%
%
\section{Algorithms for $*$-rings}\label{sec:*-rings}

In Section \ref{sec:adj}, the self-adjoint idempotents of the
$*$-ring $\Adj(b)$ where linked with $\perp$-decompositions
of $b$, and through the theorems of Section \ref{sec:groups},
also to central decompositions of $P$.  
In this section we show how to find self-adjoint idempotents 
by appealing to the semisimple and
radical structure of $*$-rings.  Most of the algorithms
reduce to known algorithms for the semisimple and radical structure theorems of finite algebras over $\mathbb{Z}_p$.  

\subsection{A fast Skolem-Noether algorithm}
Let $K$ be a field of characteristic $p$.  The Skolem-Noether theorem 
states that every ring automorphism $\varphi$ of $M_n(K)$ satisfies 
$X\varphi=D^{-1} X^\sigma D$ for some $(D,\sigma)\in 
\GL_n(K)\rtimes \Gal(K/\mathbb{Z}_p)$, and for all $X\in M_n(K)$,
\cite[(3.62)]{Curtis-Reiner}.  
Given an effective automorphism $\varphi$, there is a straightforward method
to find $(D,\sigma)$ which involves solving a system of $n^2$ linear equations
over $K$ and thus uses $O(n^6)$ field operations. We offer the following
improvement by analyzing the proof the the Skolem-Noether theorem
in \cite[Chapter VIII]{Jacobson:linear}.
\begin{prop}\label{prop:Skolem-Noether}
Given an effective ring automorphism $\varphi$ of $M_n(K)$, $K$ a finite field
of characteristic $p$, there is a deterministic algorithm using 
$O(n^4+\dim_{\mathbb{Z}_p} K)$ operations in $\mathbb{Z}_p$ 
which returns 
$(D,\sigma)\in\GL_n(K)\rtimes\Gal(K/\mathbb{Z}_p)$
such that $X\varphi= D^{-1} X^\sigma D$, for all $X\in M_n(K)$.
\end{prop}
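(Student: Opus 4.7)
The plan is to follow the classical constructive proof of Skolem--Noether, breaking it into two stages: first extract the Galois element $\sigma$ from the action of $\varphi$ on the center $Z(M_n(K))=K\cdot I$, and then build the conjugator $D$ by transporting the standard matrix units $E_{ij}$ to their $\varphi$-images. The saving over the na\"ive approach of solving the defining linear system in $n^2$ unknowns at cost $O(n^6)$ comes from exploiting the multiplicative structure of a complete system of matrix units to reduce the problem to just $n$ evaluations of $\varphi$ followed by routine linear algebra.

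For $\sigma$: fix a primitive element $\alpha\in K$ and evaluate $(\alpha I)\varphi=\beta I$ for some $\beta\in K$, which is possible because $\varphi$ preserves the center. Iteratively apply Frobenius to $\alpha$ and compare with $\beta$ to locate the unique $k\in\{0,1,\dots,m-1\}$ (where $m:=\dim_{\mathbb{Z}_p}K$) such that $\beta=\alpha^{p^k}$. This step uses $O(\dim_{\mathbb{Z}_p}K)$ operations in $\mathbb{Z}_p$ and yields $\sigma\colon x\mapsto x^{p^k}$.

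For $D$: compute $F_{i1}:=E_{i1}\varphi$ for $i=1,\dots,n$ using $n$ evaluations of $\varphi$, at total cost $O(n^4)$. Since $\varphi$ is a ring automorphism, the collection $\{F_{ij}:=E_{ij}\varphi\}$ is another complete system of matrix units in $M_n(K)$ and $F_{11}$ is a rank-one idempotent. Pick any nonzero column $v_1$ of $F_{11}$ (so $F_{11}v_1=v_1$), and set $v_i:=F_{i1}v_1$ for $i=2,\dots,n$. The relations $F_{ij}F_{k1}=\delta_{jk}F_{i1}$ immediately give $F_{ij}v_k=\delta_{jk}v_i$, so the matrix $C$ whose columns are $v_1,\dots,v_n$ satisfies $F_{ij}C=CE_{ij}$, hence $F_{ij}=CE_{ij}C^{-1}$. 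Setting $D:=C^{-1}$ yields $E_{ij}\varphi=D^{-1}E_{ij}D=D^{-1}E_{ij}^\sigma D$, since matrix units have $\{0,1\}$-entries fixed by $\sigma$. Because both $X\mapsto X\varphi$ and $X\mapsto D^{-1}X^\sigma D$ are ring automorphisms of $M_n(K)$ that agree on the matrix units and on the scalars, they agree on all of $M_n(K)$.

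The main obstacle is bookkeeping rather than substance: one must verify every stage stays within the claimed $O(n^4+\dim_{\mathbb{Z}_p}K)$ bound. The $n$ evaluations of $\varphi$ account for $O(n^4)$ field operations, the Galois search contributes $O(\dim_{\mathbb{Z}_p}K)$ operations in $\mathbb{Z}_p$, and the remaining linear algebra (locating a nonzero column of $F_{11}$, $n-1$ matrix-vector products to form the $v_i$, and one inversion to obtain $D$) fits inside $O(n^3)$. A minor technical subtlety is that $D$ is determined only up to a central scalar, but any nonzero choice of $v_1$ produces a valid $D$.
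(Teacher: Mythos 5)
Your proof is correct and achieves the same $O(n^4+\dim_{\mathbb{Z}_p}K)$ bound by the same basic idea (a constructive Skolem--Noether argument avoiding the naive $O(n^6)$ linear system), but the construction of $D$ differs from the paper's. The paper works with the minimal right ideal $I=\{xg:x\in K^n\}$ of ``first-row'' matrices, applies $\varphi$ to get another minimal right ideal $J$, searches for an index $i$ with $x_iJ\neq 0$, and then writes $D$ directly with rows $x_i(E_{1j}\varphi)$; correctness is seen through the $\sigma$-semilinear isomorphism of simple right modules $K^n\to x_iJ$. You instead transport the column matrix units: $F_{i1}:=E_{i1}\varphi$, note that $F_{11}$ is a rank-one idempotent with a nonzero column $v_1$, propagate $v_i:=F_{i1}v_1$, and take $D:=C^{-1}$ where $C=[v_1\,\cdots\,v_n]$. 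Your version replaces the paper's nonvanishing search over $i$ with the simpler task of locating a nonzero column of a rank-one matrix, at the cost of one extra $O(n^3)$ inversion to pass from $C$ to $D$ --- neither changes the asymptotics. Your closing observation, that two ring automorphisms agreeing on all $E_{ij}$ and on the center $K\cdot I$ must coincide, is a tidy alternative to the paper's explicit verification via the semilinear module map, and for $\sigma$ you exhibit the Frobenius power $p^k$ while the paper simply reads off $[(\alpha I_n)\varphi]_{11}$; both are equivalent and within budget.
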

\begin{proof}
\emph{Algorithm.}
Define $g:K^n\to M_n(K)$ by $x\mapsto \begin{bmatrix} x\\ 0\\ \vdots\end{bmatrix}$ and 
$\tau:K^n\to M_n(K)$ by $x\tau=xg\varphi$.  Fix a basis $\{x_1,\dots,x_n\}$
of $K^n$ and find the first $1\leq i\leq n$ such that $x_i (x_j\tau)\neq 0$
for all $1\leq j\leq n$.  Set $D:=\begin{bmatrix} 
x_i(x_1\tau)\\ \vdots \\ x_i(x_n\tau)\end{bmatrix}
\in M_n(K)$.  Induce $\sigma:K\to K$ by $\alpha\mapsto 
[(\alpha I_n)\varphi]_{11}$, then return $(D,\sigma)$.

\emph{Correctness.}
We summarize how the steps in this algorithm perform the various stages 
of the proof of Skolem-Noether, given in \cite[Chapter VIII]{Jacobson:linear}.

Let $I$ be the image of $g$.  As $I$ is a minimal right ideal, the image $J:=I\varphi$ is also a minimal right ideal.  Thus, 
there is an $1\leq i\leq n$ such that $x_i J\neq 0$.  Since
$x_i J$ is a simple right $M_n(K)$-module, it follows that 
$x_i J\cong K^n$.  As $\{x_1g,\dots,x_ng\}$ is a basis of $I$, 
$\{x_1\tau,\dots,x_n\tau\}$ is a basis of $J$ and so
$\{x_i(x_1\tau),\dots,x_i(x_n\tau)\}$ is a basis of $x_i J$.  Thus 
$D$ is an invertible matrix in $M_n(K)$.  Finally, 
$(\alpha I_n)\varphi=(\alpha\sigma)I_n$, for $\alpha\in K$, defines
a field automorphisms of $K$.  It follows that $X\varphi=D^{-1} X^\sigma D$ 
for each $X\in M_n(K)$.

\emph{Timing.}
The algorithm searches over the set of all $1\leq i,j\leq n$ and tests whether
$x_i(x_j\tau)\neq 0$, a test which uses $O(n^2)$ field operations in $K$.  
The additional task of inducing $\sigma$ uses $O(\dim_{\mathbb{Z}_p} K)$ 
operations in $\mathbb{Z}_p$.
\end{proof}

\subsection{Constructive recognition of finite simple $*$-rings}
\label{sec:*-simple}
The classification (up to $*$-isomorphism) of simple 
$*$-rings appears to have developed from multiple disciplines 
simultaneously (most involving rings over infinite or
arbitrary fields).  Key players included A.A. Albert, N. Jacobson,
and A. Weil; see \cite{Lewis:*-survey}.  We attempt to 
give an ersatz proof which condenses the various ideas distributed
amongst the sources.  In particular, we include the elements
that will be used in our algorithms.

\begin{lemma}\label{lem:J(R)}
The Jacobson radical of a $*$-ring is a $*$-ideal.
\end{lemma}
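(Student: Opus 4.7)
The plan is to use a standard characterization of $J(R)$ and exploit the fact that $*$ is an order-two anti-automorphism. Since $J(R)$ is already a two-sided ideal of $R$ by the general theory, it suffices to verify $J(R)^* \subseteq J(R)$ (equality then follows by applying $*$ again). I would use the symmetric pair of characterizations
\[
J(R) = \bigcap_{M \text{ max. right ideal}} M = \bigcap_{N \text{ max. left ideal}} N.
\]

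Next, observe that because $*\colon R \to R$ is an anti-isomorphism, for any right ideal $I \subseteq R$ the image $I^* := \{x^* : x \in I\}$ is a left ideal, and conversely. Moreover, $*$ preserves inclusions and has order dividing $2$, so it induces a bijection between the set of maximal right ideals and the set of maximal left ideals. Applying $*$ to the first intersection formula,
\[
J(R)^* = \bigcap_{M \text{ max. right ideal}} M^* = \bigcap_{N \text{ max. left ideal}} N = J(R),
\]
where the middle equality uses that $M \mapsto M^*$ runs over all maximal left ideals as $M$ runs over all maximal right ideals. Hence $J(R)^* = J(R)$, so $J(R)$ is a $*$-ideal.

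There is no real obstacle here: the lemma is essentially a one-line consequence of the left/right symmetry of $J(R)$ combined with the fact that $*$ interchanges left and right. As an alternative one could use the quasi-regular description, noting that $1 - sr^* = (1 - rs^*)^*$ and that $*$ preserves units, so $r \in J(R)$ iff $1 - rs^*$ is a unit for all $s$ iff $1 - sr^*$ is a unit for all $s$ iff $r^* \in J(R)$; this avoids citing the left-ideal form of the radical but is otherwise the same argument.
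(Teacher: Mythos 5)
Your argument is correct and is essentially the same as the paper's: both use that $J(R)$ equals the intersection of all maximal right ideals and also the intersection of all maximal left ideals, and that $*$ interchanges these two families. The alternative quasi-regularity argument you sketch at the end is also valid but not what the paper uses.
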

\begin{proof} The Jacobson radical is the intersection over the
set of maximal left ideals as well as the set of maximal right 
ideals;  $*$ interchanges these sets.
\end{proof}

\begin{thm}\label{thm:*-coord}
A finite $*$-simple ring $(R,*)$ is either simple as a ring or
the direct product of two isomorphic simple rings.  Thus, 
there is a field $K$, a vector
space $V$ over $K$, and an involution 
$\circ$ on $\End_{K}V$ such that $(R,*)$
is $*$-ring isomorphic to one of the following:
\begin{enumerate}[(i)]
\item \emph{Classical}: $(\End_{K} V,\circ)$, 
\item \emph{Exchange}: 
$(\End_{K} V\oplus \End_{K} V,\bullet:=\circ\wr 2)$ where
$(f,g)^{\bullet}=(g^{\circ},f^{\circ})$, for all $f,g\in \End_K V$.
Furthermore, any two exchange type $*$-simple $*$-rings which
are isomorphic as rings are isomorphic as $*$-rings.
\end{enumerate}
\end{thm}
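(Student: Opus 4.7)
The plan is to reduce the classification to standard Wedderburn theory by exploiting that $*$ interacts well with the ideal structure, and then to analyze how $*$ permutes the simple factors.

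First I would observe that by \lemref{lem:J(R)}, the Jacobson radical $J(R)$ is a $*$-ideal; since $R$ is $*$-simple and $1\in R$, we must have $J(R)=0$, so $R$ is semisimple. Write the Wedderburn decomposition $R=S_1\oplus\cdots\oplus S_k$ into simple two-sided ideals. Because $*$ is a ring anti-automorphism, it permutes the set $\{S_1,\dots,S_k\}$, and $*$-ideals of $R$ correspond to $*$-invariant unions of orbits. Hence $*$-simplicity forces a single orbit, and since $*$ has order at most $2$ this orbit has size $1$ (case (i)) or size $2$ (case (ii)). In either case, by Wedderburn--Artin together with Wedderburn's little theorem that a finite division ring is a field, each simple summand is isomorphic to $\End_K V$ for a finite field $K$ and $K$-vector space $V$.

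For the classical case, the ring isomorphism $R\cong \End_K V$ transports $*$ to an involution $\circ$ on $\End_K V$, giving $(R,*)\cong(\End_K V,\circ)$ directly. For the exchange case, fix an isomorphism $R\cong S\oplus S$ with $S=\End_K V$. Since $*$ swaps the two factors, a direct computation from the anti-automorphism and involution axioms forces $(f,g)^*=(h(g),h^{-1}(f))$ for some ring anti-automorphism $h$ of $S$. Now $S=M_n(K)$ carries the transpose involution, so at least one involution $\circ$ exists on $S$; then $\phi:=h\circ$ is a ring automorphism of $S$. Applying the coordinate change $\alpha(f,g)=(\phi^{-1}(f),g)$, which is a ring automorphism of $S\oplus S$, transports $*$ to $(f,g)\mapsto (g^\circ,f^\circ)=\circ\wr 2$, as desired.

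For the ``furthermore'' uniqueness in the exchange case, suppose two exchange type $*$-rings share the underlying ring $S\oplus S$ but carry involutions $\circ_1\wr 2$ and $\circ_2\wr 2$ coming from involutions $\circ_1,\circ_2$ of $S$. Then $\psi:=\circ_2\circ_1$ is a ring automorphism of $S$ (composition of two anti-automorphisms), and a short calculation shows that $\alpha(f,g):=(f,\psi(g))$ is a ring isomorphism that intertwines the two involutions: on one side $\alpha((f,g)^{\circ_1\wr 2})=(g^{\circ_1},\psi(f^{\circ_1}))=(g^{\circ_1},f^{\circ_2})$, while on the other $\alpha(f,g)^{\circ_2\wr 2}=(\psi(g)^{\circ_2},f^{\circ_2})=(g^{\circ_1},f^{\circ_2})$.

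The main obstacle I anticipate is the exchange case: one must verify that the general anti-automorphism $h$ factoring an exchange involution really can be massaged into a bona fide involution by a ring automorphism of the summand, and this uses both the existence of a transpose-style involution on $M_n(K)$ and a careful bookkeeping of how coordinate changes $(f,g)\mapsto(\phi(f),\psi(g))$ conjugate the defining anti-automorphism (essentially $h\mapsto \phi\, h\, \psi^{-1}$). Skolem--Noether (which is the subject of the preceding \propref{prop:Skolem-Noether}) is the natural tool for controlling such automorphisms when one wants constructive, not just existential, versions.
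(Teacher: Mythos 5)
Your proposal is correct, and the semisimplicity reduction (via \lemref{lem:J(R)}), the orbit analysis of $*$ acting on minimal two-sided ideals, the classical case, and the uniqueness calculation for the exchange type all run essentially parallel to the paper's argument. The one place you take a genuinely different route is in producing the involution $\circ$ in the exchange case: the paper builds $\circ$ intrinsically from the data $(R,*)$, defining $f^\circ := (f\varphi^{-1})^*\varrho$ where $\varphi:M\to\End_K I$ and $\varrho:M^*\to\End_K I$ are the Wedderburn isomorphisms coming from a minimal left ideal $I\leq M$; it then exhibits $\varphi\oplus\varrho$ as the $*$-isomorphism. You instead import the transpose involution on $M_n(K)$ as a fixed reference involution $\circ$, record $*$ in the form $(f,g)^*=(h(g),h^{-1}(f))$ for a ring anti-automorphism $h$ of $S$, and then normalize by the diagonal ring automorphism $\alpha(f,g)=(\phi^{-1}(f),g)$ with $\phi$ the automorphism $h\circ$. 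Your calculation goes through provided $\phi$ is read as ``apply $\circ$ first, then $h$'' (so $\phi^{-1}=\circ\, h^{-1}$ and $\phi^{-1}(h(g))=g^\circ$ falls out directly; the other composition order does not quite cancel). Both arguments buy the same theorem; the paper's construction is intrinsic and avoids fixing a reference involution, while yours is more hands-on and, because it already works relative to an arbitrarily chosen $\circ$, makes the ``furthermore'' uniqueness statement for the exchange type feel inevitable rather than a separate observation. Your closing remark that Skolem--Noether is the natural tool to upgrade this to a constructive statement matches how the paper deploys \propref{prop:Skolem-Noether} in \propref{prop:intoAdj}.
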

\begin{proof}(The proof is implicit in \cite[p. 178]{Jacobson:Jordan}.)
By Lemma \ref{lem:J(R)}, $J(R)$ is a $*$-ideal.  As $(R,*)$ is 
$*$-simple, $J(R)=0$.  By the Wedderburn theorems, $R$ is a 
direct product of its minimal ideals.  
Fix a minimal ideal $M$ of $R$ and $I$ a minimal left ideal
of $M$.  Thus, $M^*$ is also a minimal ideal of $R$ with 
minimal right ideal $I^*$.  As $M$ is a simple ring its center $K:=Z(M)$
is a field.  Evidently $I$ is a left $K$-vector space and
by Wedderburn's theorems, the left action of $M$ on $I$ produces 
a ring isomorphism $\varphi:M\to \End_K I$.  Define, 
$\varrho:M^*\to \End_K I$ by $v(x\varrho):=x^*v$ for all $x\in M^*$ 
and $v\in I$.  Evidently $\varrho$ is also a ring isomorphism. 
Thus, $f\mapsto f^\circ:=(f\varphi^{-1})^*\varrho$,
for all $f\in\End_K I$, is an involution on $\End_K I$.

Finally, $M+M^*$ is a nontrivial $*$-ideal and $(R,*)$ is $*$-simple;
therefore, $R=M+M^*$.  If $M=M^*$ then $(R,*)$ is of classical
type and $\varphi$ is a $*$-ring isomorphism to $(\End_K I,\circ)$.  Otherwise, $R=M\oplus M^*$ and $(R,*)$ is of exchange type and 
$\varphi\oplus \varrho$ is a $*$-ring isomorphism to 
$(\End_K I\oplus \End_K I, \bullet)$.

If $*$ is another involution on $\End_K I$ and $\diamond:=*\wr 2$.
Define $\mu:\End_K I\oplus \End_K I\to \End_K I \oplus \End_K I$ by
\begin{equation}\label{eq:exchange-iso}
(f,g)\mapsto (f,g^{\circ *}),\qquad \forall f,g\in\End_K I.
\end{equation}
Evidently $\mu$ is a ring isomorphism.  Furthermore,
\begin{equation}
(f,g)^{\bullet}\mu 
	= (g^{\circ},f^{\circ \circ *})
	= (f,g^{\circ *})^{\diamond}
	= (f,g)\mu^{\diamond},\qquad \forall f,g\in\End_K I.
\end{equation}
Thus $\mu$ is a $*$-ring isomorphism.
\end{proof}
\begin{remark}
The map $\mu$ defined in \eqref{eq:exchange-iso} need not be
$K$-linear, but rather only $K$-semilinear.  Our algorithms do 
not require $K$-linear isomorphisms, but they can be modified
to detect these distinctions when necessary.
\end{remark}

From the coordinatization of $*$-simple algebras given in 
\thmref{thm:*-coord}, it is now an application of the Skolem-Noether
theorem and classical forms to produce the following 
\propref{prop:intoAdj}; compare \cite[IX.10-11]{Jacobson:linear}.
\begin{prop}\label{prop:intoAdj}
There is a deterministic polynomial-time algorithm which, 
given a finite classical $*$-simple $*$-ring
$(M_n(K),\circ)$, returns a $*$-ring isomorphism 
$\varphi:(M_n(K),\circ)\to \Adj(d)$, where $d:K^n\times K^n\to K$ is
a nondegenerate symmetric or alternating bilinear, or Hermitian sesquilinear $K$-form.
\end{prop}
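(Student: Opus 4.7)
My plan is to apply the fast Skolem--Noether algorithm (\propref{prop:Skolem-Noether}) to coordinatize the involution $\circ$ by a matrix and a field automorphism, then read off from that data the Gram matrix of a classical form $d$ whose adjoint involution recovers $\circ$. First I would compose $\circ$ with the transpose to obtain a ring automorphism $\psi:X\mapsto (X^t)^\circ$ of $M_n(K)$, and run \propref{prop:Skolem-Noether} on $\psi$ to produce $D\in\GL_n(K)$ and $\sigma\in\Gal(K/\mathbb{Z}_p)$ with $X\psi = D^{-1}X^\sigma D$ for every $X$. Setting $X=Y^t$ rearranges this into the basic formula $Y^\circ = D^{-1}(Y^\sigma)^t D$, valid for every $Y\in M_n(K)$.

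Next I would invoke $\circ^2=\mathrm{id}$. Applying the formula twice and equating with $Y$ for all $Y$ forces $\sigma^2=1$ together with $D^\sigma=\lambda D^t$ for some scalar $\lambda\in K^\times$ satisfying $\lambda\lambda^\sigma=1$. When $\sigma=1$ one has $\lambda=\pm 1$, so $D$ is symmetric or skew-symmetric, and I set $d(u,v):=uDv^t$ to obtain a nondegenerate symmetric or alternating bilinear form on $K^n$. When $\sigma$ has order $2$, the relation $\lambda\lambda^\sigma=1$ places $\lambda$ in the kernel of the norm map for the quadratic extension $K/K^\sigma$, and Hilbert 90 produces $\mu\in K^\times$ with $\lambda=\mu/\mu^\sigma$; over the finite field $K$ such a $\mu$ is located in polynomial time by the averaging trick $\mu:=\nu+\lambda\nu^\sigma$ (which satisfies $\mu=\lambda\mu^\sigma$), trying $\nu$ over a basis of $K$ over $\mathbb{Z}_p$ until $\mu\neq 0$. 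Rescaling $D$ by $\mu$ then arranges $D^\sigma=D^t$, and I set $d(u,v):=uD(v^\sigma)^t$, a nondegenerate Hermitian sesquilinear form.

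In every case the normalization of $D$ translates $Y^\circ=D^{-1}(Y^\sigma)^t D$ into the adjoint identity $d(uY,v)=d(u,vY^\circ)$ for all $u,v\in K^n$ and $Y\in M_n(K)$. Hence $\varphi:Y\mapsto(Y,Y^\circ)$ is a well-defined ring homomorphism $M_n(K)\to\Adj(d)$, it intertwines $\circ$ with the involution $(f,g)\mapsto(g,f)$ on $\Adj(d)$ by construction, and it is bijective because both sides have $K$-dimension $n^2$ and $\varphi$ is injective. The main obstacle I anticipate is the Hermitian case, which requires the extra descent step to put $D$ into Hermitian normal form; however, this reduces to a single Hilbert 90 computation over a finite field, so it is cheap. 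The overall cost is dominated by the one Skolem--Noether call, yielding a deterministic polynomial-time algorithm.
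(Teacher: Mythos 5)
Your proposal follows the paper's route: run the fast Skolem--Noether algorithm on an automorphism built from $\circ$ and transpose, derive $\sigma^2=1$ and a scalar constraint relating $D^\sigma$ and $D^t$, rescale $D$, and read off a classical Gram matrix. Two remarks. First, your treatment of the Hermitian scalar is actually \emph{more} careful than the paper's: from $D=(D^t)^t$ the correct constraint is $\lambda\lambda^\sigma=1$ (a norm condition), not $\lambda^2=1$; when $\sigma$ has order $2$ the scalar can a priori be any element of the kernel of the norm $K^\times\to(K^\sigma)^\times$, and your Hilbert~90 descent with $\mu:=\nu+\lambda\nu^\sigma$ (valid since $\lambda\mu^\sigma=\lambda\nu^\sigma+\lambda\lambda^\sigma\nu=\mu$) is the right general fix. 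The paper's reset $\beta:=\gamma-\gamma^\sigma$ is just the special case $\lambda=-1$ of your construction. Second, check the Gram matrix carefully against the adjoint convention. With $d(u,v)=uG(v^\sigma)^t$ and the convention $d(uY,v)=d(u,vY^\circ)$, the adjoint of $Y$ works out to $(G^\sigma)^t\,(Y^\sigma)^t\,\bigl((G^\sigma)^t\bigr)^{-1}$; matching this to your Skolem--Noether formula $Y^\circ=D^{-1}(Y^\sigma)^tD$ (after normalizing $D^\sigma=D^t$) forces $(G^\sigma)^t=D^{-1}$, i.e.\ $G=D^{-1}$, not $G=D$. Since $D^{-1}$ is Hermitian (resp.\ symmetric, alternating) exactly when $D$ is, this is a cosmetic correction and does not disturb the structure or timing of your argument, but as written your $d(uY,v)=d(u,vY^\circ)$ would only hold when $D^2$ is central.
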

\begin{proof}
\emph{Algorithm.}
Apply the algorithm of \propref{prop:Skolem-Noether} to 
$\circ$ to find $(D,\sigma)\in \GL_n(K)\rtimes \Gal(K/\mathbb{Z}_p)$ 
such that $(X^{\circ})^t=D^{-1} X^\sigma D$, for all $X\in M_n(K)$.
If $\sigma\neq 1$ and $D^t D^{-\sigma}=-I$ then find $\gamma\in K$ such
that $\gamma^{\sigma}\neq \gamma$, and reset $D:=(\gamma-\gamma^{\sigma})D$.
Define $d:K^n\times K^n\to K$ by $d(u,v):=uD(v^{\sigma})^t$,
for all $u,v\in K^n$.  Return  $\mu:(M_n(K),\circ)\to \Adj(d)
\subseteq \End_K K^n\oplus \End_K K^n$ 
defined by $a\mu:=(a,a^{\circ})$.

\emph{Correctness.} As $D$ is invertible, $d$ is biadditive, linear
in the first variable, and nondegenerate.
For all $\alpha\in K$,
$\alpha^{\sigma^2} I=D^{-2}(\alpha^{\sigma^2} I)D^2
		=((\alpha I)^{\circ})^{\circ}=\alpha I$.
Hence, $\sigma^2=1$.  Also,
\begin{equation}
	X=(X^{\circ})^{\circ} = D^t(D^t X^{\sigma t} D^{-t})^{\sigma t} D^{-t}
		= D^t D^{-\sigma} X D^{\sigma} D^{-t},\qquad \forall X\in M_n(K).
\end{equation}
Thus, $D^{-\sigma} D^{t}=\alpha I$, for some $\alpha\in K$.  As $D=(D^t)^t=\alpha^2 D$, it follows that $\alpha=\pm 1$.  Therefore $D^t=\pm D^{\sigma}$.  If $\sigma=1$ then $d$ is $\pm 1$-symmetric. 
If $\sigma\neq 1$ and $\alpha =1$ then $d$ is Hermitian.  Otherwise,
$\alpha=-1\neq 1$, $\chr K\neq 2$, and $K$ is a 
quadratic field extension over the subfield fixed by $\sigma$.  So 
there is a $\beta:=\gamma-\gamma^\sigma\in K$ such that $\beta^{\sigma}=-\beta$.  Evidently,
$(\beta D)^t (\beta D)^{-\sigma}=-\beta \beta^{-\sigma} I=I$.  Thus,
reseting $D$ to $\beta D$ makes $D^t=D^{\sigma t}$ and $d$ is
Hermitian.

Finally, $XD=D(X^{\circ})^{\sigma t}$ so that $d(uX,v)=d(u,vX^{\circ})$
for each $X\in M_n(K)$ and $u,v\in K^n$.  Thus $(M_n(K),\circ)$ is 
$*$-isomorphic to $\Adj(d)$ via $X\mapsto (X,X^{\circ})$.

\emph{Timing.} Applying the algorithm for \propref{prop:Skolem-Noether}
uses $O(n^4+\dim_{\mathbb{Z}_p} K)$ operations
in $\mathbb{Z}_p$.  Determining if $\sigma\neq 1$ discovers some
$\gamma\in K$ such that $\gamma^{\sigma}\neq \gamma$, and can
be carried out within the algorithm of \propref{prop:Skolem-Noether}.
Therefore, the remaining computations involve only matrix multiplication.
So the overall time lies in $O(n^4 +\dim_{\mathbb{Z}_p} K)$.
\end{proof}

\begin{remark}
The $*$-simple $*$-rings of exchange type can also be treated
as adjoints of a form.  Specifically, let 
$C:=(K\oplus K,\overline{(\alpha,\beta)}:=(\beta,\alpha))$.
Then define $d:C^n\times C^n\to C$ by $d(u,v)=u\bar{v}^t$.
Evidently, $\Adj(d)$ is $*$-ring isomorphic to 
$(M_n(K)\oplus M_n(K),(X,Y)^\bullet:=(Y^t, X^t))$.
See \cite[Section 4.2]{Wilson:unique} for uniform treatment 
of these forms using associative composition algebras.
\end{remark}

\subsection{Computing the $*$-semisimple and $*$-radical structure
of $\Adj(b)$}

We require the following generalization of the algorithm of 
\cite{Ivanyos:fast-alge} using effective homomorphism (Section \ref{sec:linear}).
\begin{thm}\label{thm:simple-factors}
There is a Las Vegas polynomial-time algorithm which, 
given $R\subseteq \End V$, for a finite abelian $p$-group $V$,
returns a set $\Omega$ of effective ring
epimorphisms such that:
\begin{enumerate}[(i)]
\item for each $\pi\in \Omega$, $\ker \pi$ is a maximal ideal of $R$ 
and the image of $\pi$ is $M_n(K)$ for some field $K$ and integer
$n$ (dependent on $\pi$),
\item for each maximal ideal $M$ of $R$ there is a unique $\pi\in\Omega$
such that $M=\ker \pi$, and
\item if $x,y\in R$ such that $x\pi=y\pi$ then the representatives $x',y'\in R$
of the pullbacks to $R$ of $x\pi$ and $y\pi$ given by the effective 
$\pi\in \Omega$, satisfy $x'\equiv y'\pmod{pR}$.  Each evaluation or 
computation of preimages of $\pi$ uses $O(\rank^3 R)$ operations.
\end{enumerate}
\end{thm}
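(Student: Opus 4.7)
The plan is to reduce to the existing Las Vegas algorithm of Ronyai and Ivanyos \cite{Ronyai,Ivanyos:fast-alge} for finite $\mathbb{Z}_p$-algebras by passing to the quotient $\bar R := R/pR$. The key observation is that since $V$ is a finite abelian $p$-group with $p^e V = 0$ for some $e$, the central element $p\cdot 1_R$ satisfies $p^e\cdot 1_R = 0$, so $pR$ is a nilpotent (two-sided) ideal of $R$ and is therefore contained in the Jacobson radical $J(R)$. Consequently $\bar R$ is a finite $\mathbb{Z}_p$-algebra whose maximal ideals correspond bijectively, via pullback along the surjection $\rho:R\twoheadrightarrow \bar R$, with the maximal ideals of $R$, and each quotient $R/M$ agrees with $\bar R/\rho(M)$.

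First I would compute a $\mathbb{Z}_p$-basis and structure constants for $\bar R$ by reducing the given matrix generators of $R$ modulo $p$ using the linear algebra outlined in Section \ref{sec:linear}. Next I would invoke the algorithm of \cite{Ivanyos:fast-alge} to compute $J(\bar R)$ and produce an explicit Wedderburn decomposition $\bar R/J(\bar R)\cong \prod_i M_{n_i}(K_i)$, together with effective ring epimorphisms $\bar\pi_i:\bar R\to M_{n_i}(K_i)$ whose kernels are precisely the distinct maximal ideals of $\bar R$. Composing with $\rho$ yields the epimorphisms $\pi_i:R\to M_{n_i}(K_i)$ comprising the set $\Omega$; claims $(i)$ and $(ii)$ then follow from the bijection of maximal ideals and the fact that each $R/\ker\pi_i\cong M_{n_i}(K_i)$ is simple.

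For $(iii)$, the effective pullback for $\pi_i$ is obtained by first invoking the effective inverse of $\bar\pi_i$ supplied by \cite{Ivanyos:fast-alge} to produce a canonical representative $\bar x\in\bar R$ with respect to a fixed $\mathbb{Z}_p$-basis, and then lifting $\bar x$ to $R$ through a pre-chosen section of $\rho$. Two pullbacks of the same element of $M_{n_i}(K_i)$ agree in $\bar R$ and therefore differ in $R$ by an element of $pR$, which is precisely the claimed congruence. Each evaluation or preimage then reduces to a bounded number of linear-algebra and matrix multiplication operations at dimension $\rank R$, giving the stated $O(\rank^3 R)$ bound. The main obstacle I anticipate is verifying that all randomness is absorbed into the univariate polynomial-factorization step internal to \cite{Ivanyos:fast-alge}, so that the reduction $R \mapsto \bar R$ and the lift back contribute only deterministic linear-algebra cost and the overall procedure remains Las Vegas polynomial-time.
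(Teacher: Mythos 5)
Your proposal follows the same overall strategy as the paper: observe that $pR \subseteq J(R)$, pass to the finite $\mathbb{Z}_p$-algebra $\bar R = R/pR$, invoke the R\'onyai--Iv\'anyos machinery for the semisimple structure, and lift back. Your argument for $pR \subseteq J(R)$ (nilpotency of $pR$) is a perfectly good alternative to the paper's ($R/J(R)$ is a $\mathbb{Z}_p$-vector space because each $R/M$ is), and your observation that pullbacks of equal elements differ by $pR$, giving $(iii)$, is correct.

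However, there is a gap worth flagging. You write that \cite{Ivanyos:fast-alge} supplies ``effective ring epimorphisms $\bar\pi_i:\bar R\to M_{n_i}(K_i)$ whose kernels are precisely the distinct maximal ideals,'' and then you compose these with $\rho$. What the cited Corollary 1.5 of \cite{Ivanyos:fast-alge} actually gives is a Wedderburn complement decomposition $\bar R = \bar S \oplus J(\bar R)$ as a $\mathbb{Z}_p$-vector space, with $\bar S$ a semisimple subring. You do not get the individual epimorphisms onto $M_{n_i}(K_i)$ for free; extracting them requires additional work. Concretely, the paper applies the \textsf{C-MeatAxe} to decompose $\bar V$ into irreducible $\bar S$-submodules, then selects a minimal subset $\mathcal{W}$ of these irreducibles on which $\bar S$ acts faithfully (a greedy step, and one must be careful here to ensure exactly one epimorphism per isomorphism class of simple factor, which is what property $(ii)$ demands), identifies the field $K$ as the center of the image acting on each $\bar W$, and defines the epimorphism $\pi_{\bar W}$ by composing the projection $\tau:\bar R \to \bar S$ with restriction to $\bar W$. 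The effective preimage computation is tied to this block structure: a matrix in $\End_K \bar W$ is extended by zeros on the other blocks and pulled back through $\tau$. Without this explicit construction, it is unclear how the stated $O(\rank^3 R)$ per-evaluation bound in $(iii)$ is achieved, and uniqueness in $(ii)$ (one $\pi$ per maximal ideal) is not automatic from citing the decomposition theorem alone.
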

\begin{proof}
\emph{Algorithm}.
Pass to $\bar{R}:=R/pR\subseteq \End \bar{V}$, $\bar{V}=V/pV$, and using 
\cite[Corollary 1.5]{Ivanyos:fast-alge} compute a Wedderburn
complement decomposition $\bar{R}=\bar{S}\oplus J(\bar{R})$, where 
$\bar{S}$ is a subring of $\bar{R}$ and $\bar{S}\cong \bar{R}/J(\bar{R})$ as rings (note that the direct decomposition is as vector spaces not necessarily as rings).

Apply the \textsf{C-MeatAxe}, \cite{C-MeatAxe}, to $\bar{S}$ 
to find a the set $\mathcal{X}$ of irreducible $\bar{S}$-submodules 
of $\bar{V}:=V/pV$.  As $\bar{S}$ is semisimple, $\mathcal{X}$ is
a direct decomposition of $\bar{V}$.  Conjugate $R$ by the change of
basis matrix resulting from the basis exhibiting the submodules in
$\mathcal{X}$ so that $R$ is block lower triangular.  Use a greedy algorithm to find a minimal subset $\mathcal{W}$ of $\mathcal{X}$ such that $\bar{S}$ acts faithfully on $\langle \mathcal{W}\rangle$.  Let $\tau:\bar{R}\to \bar{S}$ be the projection of $\bar{x}\in\bar{R}$ to $\bar{S}$ given by the vector space decomposition $\bar{R}=\bar{S}\oplus J(\bar{R})$. Pull-backs of $\tau$ are defined by means of the image of basis elements and the linearity of $\tau$.

For each $\bar{W}\in\mathcal{W}$, define $\pi_{\bar{W}}:R\to \End \bar{W}$ by $x\pi_{\bar{W}} := (x+pR)\tau|_{\bar{W}}$, for $x\in R$.  The 
coset representative of the inverse image of $\bar{t}\in \End \bar{W}$ 
is created by extending $\bar{t}$ to $V$ as $\bar{s}$ acting as $0$ on each $\bar{V}_i\neq \bar{W}$, $1\leq i\leq l$ (i.e., $\bar{s}$ has $\bar{t}$ in the $\bar{W}$ diagonal block of the matrix and $0$'s elsewhere), and then returning a coset representative of $\bar{s}\tau^{-1}$.  Thus $\pi$ is an effective homomorphism.  The algorithm returns the set $\{\pi_{\bar{W}} : \bar{W}\in\mathcal{W}\}$.  

\emph{Correctness}. If $M$ is a maximal ideal of $R$ then
$R/M\cong \End_K W$ for some field extension $K/\mathbb{Z}_p$ and
 $K$-vector space $W$.  Hence, $R/M$ is a $\mathbb{Z}_p$-vector space
and so $R/J(R)$ is a $\mathbb{Z}_p$-vector space, which proves that
$pR\leq J(R)$ and $J(\bar{R})=J(R)/pR$.  Therefore, it suffices
to find the projections of $\bar{R}$ onto its simple factors.

Since $R/pR\subseteq \End \bar{V}$ we can apply 
\cite[Corollary 1.5]{Ivanyos:fast-alge}.  
Hence, we obtain a Wedderburn complement decomposition
$\bar{R}=\bar{S}\oplus J(\bar{R})$.  As $\bar{S}$ is semisimple its
action on $\bar{V}$ is completely reducible and the \textsf{C-MeatAxe}
\cite{C-MeatAxe} finds a decomposition $\bar{V}=\bar{V}_1\oplus\cdots\oplus \bar{V}_l$ as above. For each $\bar{W}\in\mathcal{W}$, the map $\pi_{\bar{W}}$ is a ring homomorphism as $\tau$ is a ring homomorphism and $\bar{W}$ is an $S$-module.  Since $\bar{W}$ is
also irreducible it follows that $\bar{T}:=R\pi_{\bar{W}}\leq \bar{S}$ 
is a simple subring of $\End_{\mathbb{Z}_p} \bar{W}$.  The appropriate 
field of scalars is the center $K$ of $\bar{T}$.  Thus $\bar{W}$ is a 
$K$-vector space and $\pi_{\bar{W}}$ is a ring epimorphism
onto $\End_K \bar{W}$ with kernel a maximal ideal of $R$, proving $(i)$.  
Since $\mathcal{W}$ is minimal with respect to having $\bar{S}$ represented 
faithfully on $\langle \mathcal{W}\rangle$, the returned set of epimorphism 
has one epimorphism for each maximal ideal of $R$, thus proving $(ii)$.

Finally, for $(iii)$ we note that the representative matrix for the  inverse image 
under $\pi\in \Omega$, of a point in $\End_K \bar{W}$ 
is trivial in every block except the block on which $\pi$ is projected.  Furthermore,
to evaluate $\pi$ requires we compute $(x+pR)\tau$ which is done by writing
$x+pR$ in the bases of the block decomposition given by $\{V_1,\dots,V_l\}$
and uses $O(\dim^3 \bar{V})$ operations.  To compute a preimage of $\bar{t}$
under $\pi$ requires we write $\bar{t}$ in the basis $\mathcal{X}\tau$
where $\mathcal{X}$ is the fixed basis of $R$.  Therefore the algorithm returns
correctly.

\emph{Timing}. The significant tasks are computing the Wedderburn decomposition
and the use of the \textsf{C-MeatAxe}, both which use $O(\dim^5 \bar{V})$ 
operations in $\mathbb{Z}_p$; see
\cite[Corollary 1.4]{Ivanyos:fast-alge}, \cite{C-MeatAxe}.
\end{proof}

\begin{lemma}\label{lem:pullback}
There is a deterministic polynomial-time algorithm which,
given a $*$-ring epimorphism $\gamma:(R,*)\to (T,*)$
and $t\in T$ such that $t^*=t$, returns an $s\in R$ such that 
$s\gamma=t$ and $s^*=s$.
\end{lemma}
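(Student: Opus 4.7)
The approach is $*$-equivariant averaging. Effectively pull $t$ back along $\gamma$ to any preimage $r \in R$. Because $\gamma$ intertwines the involutions, $r^*\gamma = (r\gamma)^* = t^* = t$, so $r^*$ is also a preimage and $u := r + r^*$ lies in $\mathfrak{H}(R,*)$ with $u\gamma = 2t$.

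Under the paper's standing setup, $R$ sits inside $\End V$ for a finite abelian $p$-group $V$, and thus has characteristic $p^e$ for some $e$. When $p$ is odd, $2$ is a unit in the central copy of $\mathbb{Z}_{p^e}\cdot 1_R$ inside $R$, and we simply return $s := 2^{-1}(r + r^*)$; this is self-adjoint and satisfies $s\gamma = 2^{-1}(2t) = t$. That disposes of the main case in one effective pullback through $\gamma$ plus $O(\rank^2 R)$ arithmetic operations in $R$.

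The main obstacle is the case $p = 2$, where $2$ is nilpotent in $R$ and naive averaging fails. I would reduce through the $*$-stable Jacobson radical using \lemref{lem:J(R)}: $\gamma$ descends to a $*$-epimorphism $\bar\gamma : R/J(R) \to T/J(T)$ of semisimple $*$-rings. By \thmref{thm:*-coord}, each simple $*$-factor of $R/J(R)$ is of classical or exchange type, and since simple rings admit no proper nonzero quotients, $\bar\gamma$ restricts on each factor either to a $*$-isomorphism onto a simple factor of $T/J(T)$ or to the zero map; a self-adjoint lift of $t + J(T)$ is then assembled factor by factor. One next lifts through the nilpotent filtration $J(R) \supseteq J(R)^2 \supseteq \cdots$ by a Hensel-type iteration: given a self-adjoint lift modulo $J(R)^i$, the obstruction to upgrading it modulo $J(R)^{i+1}$ is a linear $*$-equation on the semisimple $*$-layer $J(R)^i/J(R)^{i+1}$, solvable by the same classical/exchange dichotomy. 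All stages run in polynomial time using the Wedderburn decomposition machinery of \cite{Ivanyos:fast-alge} already invoked in Section~\ref{sec:*-rings}.
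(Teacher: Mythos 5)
Your route is genuinely different from the paper's. The paper takes a basis $\mathcal{X}$ of $\mathfrak{H}(R,*)$, writes $t$ as a $\mathbb{Z}_{p^e}$-linear combination of the pushforward $\mathcal{X}\gamma$, and returns the same combination of $\mathcal{X}$; correctness rests entirely on the one-line assertion that ``$\mathcal{X}\gamma$ spans the submodule of self-adjoint elements of $(T,*)$,'' i.e.\ that $\gamma$ maps $\mathfrak{H}(R,*)$ \emph{onto} $\mathfrak{H}(T,*)$, which the paper does not justify. You instead take any preimage $r$ of $t$ and symmetrize, getting $s=2^{-1}(r+r^*)$ when $p$ is odd. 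For odd $p$ your argument is correct and in fact supplies the missing step: the averaging identity \emph{is} the proof that $\gamma|_{\mathfrak{H}(R,*)}$ is surjective.

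The genuine gap, for you and for the paper, is at $p=2$, and your Hensel-lifting sketch does not close it. The obstruction to promoting a self-adjoint lift modulo $J(R)^{i}$ to one modulo $J(R)^{i+1}$ lands in the quotient of the anti-self-adjoint elements of the layer $J(R)^i/J(R)^{i+1}$ by those of the form $j-j^*$; in characteristic $2$ this quotient need not vanish, so the ``linear $*$-equation'' you invoke is not automatically solvable. Worse, the lemma is false at its stated generality: take $R=\mathbb{Z}_2[\mathbb{Z}/4]$ with $g^*=g^{-1}$, set $J:=J(R)$, $T:=R/J^2$, and let $\gamma$ be the quotient map. Here $g^3-g\in J^2$, so $*$ acts trivially on $T$ and every element of $T$ is self-adjoint; yet $\mathfrak{H}(R,*)=\langle 1,\,g^2,\,g+g^3\rangle$ maps onto only $\langle 1+J^2\rangle\subsetneq T$, so $t:=g+J^2$ has no self-adjoint preimage and no algorithm can return the promised $s$. (This $T$ is not semisimple, whereas the paper only ever applies the lemma with $\ker\gamma\supseteq J(R)$ and $T$ a $*$-simple quotient as in \corref{coro:*-simple-factors}; the lemma may well survive with that hypothesis added, but neither your proposal nor the paper's proof verifies that the $p=2$ obstruction actually vanishes in that restricted setting.)
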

\begin{proof}
\emph{Algorithm}.
Use a basis $\mathcal{X}$ for $\mathfrak{H}(R,*)$ and write
$t=\sum_{x\in\mathcal{X}} s_x x\gamma$, with $s_x\in \mathbb{Z}_{p^e}$.
Return $\sum_{x\in\mathcal{X}} s_x s$.

\emph{Correctness}.  Since $\gamma$ is an $*$-ring homomorphism,
$x\gamma^*=x^*\gamma=x\gamma$.  As $\gamma$ is an epimorphism,
$\mathcal{X}\gamma$ spans the submodule of self-adjoint elements 
of $(T,*)$.  Therefore, $t=\sum_{x\in\mathcal{X}} s_x x\gamma
=\left(\sum_{x\in\mathcal{X}} s_x x\right)\gamma$.  So the
return is correct.

\emph{Timing}.  Assuming a basis for $\mathfrak{H}(R,*)$ is provided,
the task required $O(|\mathcal{X}|)$ evaluations of $\gamma$, and
Gaussian elimination to write $t$ as a linear combination of
$\mathcal{X}\gamma$, which uses $O(|\mathcal{X}|^3)$ operations
in $T$.
\end{proof}

\begin{coro}\label{coro:*-simple-factors}
There is a Las Vegas polynomial-time algorithm which,
given a $*$-ring $(R,*)$ where $R\subseteq \End V$ for an abelian $p$-group $V$,
returns a set $\Gamma=\{\gamma:(R,*)\to (T,*)\}$ of 
$*$-ring epimorphisms where:
\begin{enumerate}[(i)]
\item there is exactly one $\gamma\in \Gamma$ for each maximal $*$-ideal
$M$ of $(R,*)$, and $\ker \gamma=M$.
\item for each $\gamma:(R,*)\to (T,*)\in \Gamma$ either:
\begin{enumerate}[(a)]
\item $T=(M_m(K)\oplus M_n(K),(X,Y)\mapsto (Y^t, X^t))$, or
\item $T=\Adj(d)$ for a nondegenerate symmetric, alternating, or Hermitian form 
$d:K^m\times K^m\to K$.
\end{enumerate}
\item If $x,y\in (R,*)$ such that $x\gamma=y\gamma$ then the representatives $x',y'\in (R,*)$ of the pullbacks to $(R,*)$ of $x\gamma$ and $y\gamma$ given by the effective homomorphism $\gamma\in \Gamma$, satisfy $x'\equiv y'\pmod{pR}$.  Furthermore, if
$x\in \mathfrak{H}(R,*)$ then $x'\in\mathfrak{H}(R,*)$.
\end{enumerate}
\end{coro}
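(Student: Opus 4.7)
The plan is to build $\Gamma$ by starting from the ring-theoretic simple-factor epimorphisms produced by \thmref{thm:simple-factors} applied to $R$ (ignoring the involution), then organize them according to how $*$ acts on the maximal ideals of $R$, and finally coordinatize each $*$-simple image using \thmref{thm:*-coord} and \propref{prop:intoAdj}.

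First I would apply \thmref{thm:simple-factors} to $R\subseteq \End V$ to get a set $\Omega=\{\pi:R\to M_n(K)\}$ of effective ring epimorphisms, one per maximal ideal. Since $*$ is an anti-automorphism of $R$ it permutes the set of two-sided maximal ideals, so it induces an involution on the indexing set $\Omega$ sending $\pi$ to (the unique) $\pi'\in\Omega$ with $\ker \pi'=(\ker \pi)^*$. This partition can be computed by pulling back a generating set of $\ker \pi$, applying $*$ in $R$, and testing under which $\pi'\in\Omega$ it vanishes, using the effective pullbacks guaranteed by \thmref{thm:simple-factors}(iii).

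For each fixed point $M=M^*$ of the $*$-action, the quotient $R/M$ inherits an involution and is $*$-simple of classical type; \propref{prop:intoAdj} applied to the induced involution on $R/M\cong M_n(K)$ produces a $*$-ring isomorphism $\mu$ onto $\Adj(d)$ for a nondegenerate form $d$, and I set $\gamma:=\pi\mu$. For each two-element orbit $\{M,M^*\}$ with $M\neq M^*$, form $\gamma:=\pi\oplus \pi^\prime:R\to R/M\oplus R/M^*$. The quotient $R/(M\intersect M^*)\cong R/M\oplus R/M^*$ carries an involution swapping the summands; using the identification $R/M^*\to (R/M)^{op}$ induced by $*$ and \thmref{thm:*-coord}, compose with the semilinear $*$-isomorphism \eqref{eq:exchange-iso} to land in $(M_n(K)\oplus M_n(K),(X,Y)\mapsto (Y^t,X^t))$. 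The union of these gives $\Gamma$, and each $\ker \gamma$ is exactly the intersection of a $*$-orbit of maximal ideals, which is the generic description of a maximal $*$-ideal, proving (i) and (ii).

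For (iii), the effective inverses produced here are obtained by composing the pullback mechanism of \thmref{thm:simple-factors} with the fixed linear $*$-isomorphisms $\mu$ of the previous step, so $x'-y'\in pR$ follows immediately from \thmref{thm:simple-factors}(iii). If additionally $x\in\mathfrak{H}(R,*)$, then $x'$ is a pullback of a self-adjoint element of $T$, and I apply \lemref{lem:pullback} to the $*$-ring epimorphism $\gamma$ to replace $x'$ with a self-adjoint pullback; the difference between the two pullbacks lies in $\ker\gamma\leq pR$ by the Wedderburn-complement construction inside \thmref{thm:simple-factors}, so the congruence is preserved.

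The main obstacle will be (ii), specifically verifying that the $*$-ring structures on the images match the canonical forms in the statement: in the classical case one must compute the matrix form of the induced involution on $R/M$ to feed it into \propref{prop:intoAdj}, which ultimately relies on the fast Skolem-Noether routine of \propref{prop:Skolem-Noether}; in the exchange case one must present a canonical labeling of the two summands so that the swap involution takes the advertised transpose form, which is handled by the semilinear correction \eqref{eq:exchange-iso}. Timing is dominated by \thmref{thm:simple-factors} at $O(\rank^5 R)$ plus at most $|\Omega|$ applications of \propref{prop:intoAdj}, each polynomial, yielding the required Las Vegas polynomial-time bound.
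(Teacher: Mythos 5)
Your proposal follows the paper's approach closely: apply \thmref{thm:simple-factors} to get $\Omega$, then organize the maximal ideals into $*$-fixed points and $*$-swapped pairs, using \propref{prop:intoAdj} for the fixed points and a twist to exchange form for the pairs (the paper writes this twist explicitly as $r\gamma := (r\pi, (r^*\pi')^t)$, while you obtain the same normalization via the semilinear correction \eqref{eq:exchange-iso} from \thmref{thm:*-coord}). One small slip in your (iii): you assert $\ker\gamma \leq pR$, but the containment actually goes the other way, since $pR \leq J(R) \leq M \intersect M^*=\ker\gamma$; the congruence mod $pR$ in fact comes directly from \thmref{thm:simple-factors}(iii) together with the deterministic choice of basis in \lemref{lem:pullback}, so this does not damage the argument but the justification should be corrected.
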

\begin{proof}
\emph{Algorithm}.
Let $\Gamma=\emptyset$.  Using the algorithm of 
\thmref{thm:simple-factors}, compute a representative set of ring epimorphisms 
$\Omega=\{\pi:R\to M_n(K)\}$ corresponding to the maximal ideals of 
$R$.  Take $\pi\in\Omega$ and set $M:=\ker \pi$.  Test if $M^*=M$.  
If so then apply the algorithm of \propref{prop:intoAdj} to construct an effective isomorphism $\varphi:(M_n(K),*)\to \Adj(d)$.  Add $\varphi$ to $\Gamma$ and continue.  Otherwise, find $\pi'\in \Omega$ 
where $\ker \pi'=M^*$.  Then remove $\pi'$ from $\Omega$ 
and define $\gamma:R\to (M_n(K)\oplus M_n(K),\bullet)$ by
$r\gamma:=(r\pi,(r^*\pi')^t)$.  Add $\gamma$ to $\Gamma$ and continue.

\emph{Correctness}.
By \thmref{thm:*-coord}, \propref{prop:intoAdj}, and \thmref{thm:simple-factors} the algorithm returns correctly.

\emph{Timing}.  The number of operations is dominated by the
algorithm for \thmref{thm:simple-factors}.
\end{proof}

\subsection{Finding self-adjoint frames}\label{sec:find-frame}
Let $(R,*)$ be a finite $*$-ring.  We outline how to find
a self-adjoint frame of $\mathfrak{H}(R,*)=\{r\in R : r^*=r\}$.  To do this
we require the following lemma:
\begin{lemma}[Lifting idempotents]\label{lem:lift-idemp}
Suppose that $e\in R$ such that $e^2-e\in \rad R$ and
$(e^2-e)^n=0$ for some $n\in\mathbb{Z}$.  Setting
\begin{equation}\label{eq:lift-idemp}
	\hat{e} := e^n \sum_{j=0}^{n-1} \binom{2n-1}{j} e^{n-1-j} (1-e)^j
\end{equation}
it follows that:
\begin{enumerate}[(i)]
\item $\hat{e}^2=\hat{e}$,
\item $e\equiv \hat{e} \pmod{\rad R}$,
\item $\widehat{1-e}=1-\hat{e}$, and
\item If $*$ is an involution on $R$ and $e^*=e$ then $\hat{e}^*=\hat{e}$.
\end{enumerate}
\end{lemma}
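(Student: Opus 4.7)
The plan is to work in the commutative subring $\mathbb{Z}[e]$ generated by $e$, exploiting the fact that $e$ and $1-e$ commute. Set $a=e$ and $b=1-e$ so that $a+b=1$ and $ab=e-e^2=-(e^2-e)$, which by hypothesis satisfies $(ab)^n=0$. The key observation is to expand the identity $1=(a+b)^{2n-1}$ via the binomial theorem and split the sum at $j=n$:
\[
1 = \sum_{j=0}^{n-1}\binom{2n-1}{j} a^{2n-1-j} b^{j} + \sum_{j=n}^{2n-1}\binom{2n-1}{j} a^{2n-1-j} b^{j}.
\]
The first sum is precisely $\hat{e}$ (after factoring out $a^{n}=e^{n}$), and the second sum is therefore $1-\hat{e}$; call it $g$. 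This immediately sets up all four conclusions.

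For (i), I would compute the product $\hat{e}\cdot g$: each cross term has the form $\binom{2n-1}{j}\binom{2n-1}{k} a^{4n-2-j-k} b^{j+k}$ with $0\le j\le n-1$ and $n\le k\le 2n-1$. Since $j+k\ge n$ and $4n-2-j-k\ge n$ (the latter because $j+k\le 3n-2$), each cross term is divisible by $(ab)^{n}=0$ in the commutative ring $\mathbb{Z}[e]$. Hence $\hat{e}(1-\hat{e})=0$, which rearranges to $\hat{e}^2=\hat{e}$.

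For (iii), I would simply observe the symmetry $a\leftrightarrow b$: the formula defining $\widehat{1-e}$ is exactly the same as that defining $\hat{e}$ with the roles of $e$ and $1-e$ swapped, which by the splitting above gives the second sum $g=1-\hat{e}$. For (ii), reduction modulo $\rad R$ kills every term containing a factor of $e(1-e)=ab$, leaving only the $j=0$ contribution $\binom{2n-1}{0}e^{2n-1}\equiv e\pmod{\rad R}$, since $e^{2}\equiv e$ modulo the radical. For (iv), if $e^{*}=e$ then $(1-e)^{*}=1-e$, so every monomial $e^{2n-1-j}(1-e)^{j}$ is a product of commuting self-adjoint elements, hence self-adjoint; summing preserves this, yielding $\hat{e}^{*}=\hat{e}$.

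No real obstacle is anticipated: the identity driving everything is the binomial splitting, and the rest is bookkeeping inside the commutative subring $\mathbb{Z}[e]$. The only mild subtlety is verifying the degree bound $4n-2-j-k\ge n$ that makes the cross terms vanish, which follows from the disjoint ranges $j<n\le k$.
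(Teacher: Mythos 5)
Your proof is correct, and it takes the same route the paper intends: the paper simply points to Curtis--Reiner (6.7) for parts (i)--(iii), and your binomial splitting of $(e+(1-e))^{2n-1}$ at $j=n$ is exactly the standard argument there, with your treatment of (iv) matching the paper's one-line observation that $\hat{e}\in\mathbb{Z}[e]$. You have supplied the details the paper leaves to the reference, but the underlying approach is identical.
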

\begin{proof}
$(i)$ through $(iii)$ can be verified directly, compare \cite[(6.7)]{Curtis-Reiner}.  For $(iv)$ notice that $\hat{e}$ is a polynomial in $\mathbb{Z}[e]$.  
As $1^*=1$ and $e^*=e$ it follows that $\hat{e}^*=\hat{e}$.
\end{proof}

\begin{prop}\label{prop:simple-frame}
\begin{enumerate}[(i)]
\item There is a deterministic polynomial-time algorithm which,
given $\Adj(d)$ for a nondegenerate symmetric, alternating, or 
Hermitian form $d:K^n\times K^n\to K$, returns a self-adjoint 
frame of $\Adj(d)$ of maximum possible size.
\item  If $(M_n(K)\oplus M_n(K),\bullet)$
a simple $*$-ring with a standard exchange involution, 
then $\mathcal{E}=\{(E_{ii},E_{ii}): 1\leq i\leq n\}$ is a self-adjoint
frame of $(M_n(K)\oplus M_n(K),\bullet)$ of maximum possible size.
\end{enumerate}
\end{prop}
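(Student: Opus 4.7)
The plan is to handle the two parts separately, in each case using the bijection from Theorem \ref{thm:idemp} between self-adjoint frames and fully refined $\perp$-decompositions.

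For part (ii), the verification is nearly immediate. Each matrix unit $E_{ii}$ is self-transpose, so $(E_{ii},E_{ii})^\bullet=(E_{ii}^t,E_{ii}^t)=(E_{ii},E_{ii})$; the identities $E_{ii}E_{jj}=\delta_{ij}E_{ii}$ and $\sum_i E_{ii}=I_n$ transfer directly to orthogonality and completeness of $\mathcal{E}$. For primitivity and maximality, the plan is to observe that a self-adjoint element of the exchange ring has the form $(f,f^t)$, and two self-adjoint idempotents $(f,f^t),(g,g^t)$ are orthogonal exactly when $f,g$ are orthogonal idempotents in $M_n(K)$. Refining $(E_{ii},E_{ii})$ would thus require refining $E_{ii}$ as an idempotent in $M_n(K)$, which is impossible since $E_{ii}$ has rank $1$; and any self-adjoint frame corresponds to a complete set of pairwise-orthogonal primitive idempotents in $M_n(K)$, of which there are exactly $n$.

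For part (i), I would reduce to producing a fully refined $\perp$-decomposition of $K^n$ with respect to $d$ by classical orthogonalization, and then read off the projection idempotents as the frame. The algorithm splits by form type:
\begin{enumerate}[(a)]
\item If $d$ is symmetric or Hermitian with $\chr K\neq 2$, run Gram--Schmidt to produce an orthogonal basis $v_1,\ldots,v_n$ with $d(v_i,v_i)\neq 0$, and take the projections onto each $\langle v_i\rangle$ along the sum of the remaining $\langle v_j\rangle$.
\item If $d$ is alternating, iteratively extract symplectic pairs $(u_i,v_i)$ with $d(u_i,v_i)=1$ and orthogonal to those already chosen, obtaining a decomposition of $K^n$ into $n/2$ pairwise orthogonal hyperbolic planes; take the projections onto each plane.
\end{enumerate}
Self-adjointness of each projection is Theorem \ref{thm:idemp}. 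Primitivity holds because a $1$-dimensional nondegenerate subspace admits no proper $\perp$-refinement, while a hyperbolic plane has no proper nondegenerate orthogonal summand since any $1$-dimensional subspace restricts to the zero form under an alternating $d$. Maximality of the frame size is then a rank count: any self-adjoint frame refines to one whose primitive components are orthogonally indecomposable nondegenerate subspaces, and the construction above exhausts these. Each extraction step costs $O(n^2)$ operations in $K$ and at most $n$ steps occur, so the running time is $O(n^3)$ field operations.

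The main obstacle I expect is the symmetric case in characteristic $2$, where Gram--Schmidt need not diagonalize $d$: an entirely isotropic basis can arise even though $d$ is nondegenerate and non-alternating. The remedy is to alternate between two moves at each step, selecting an anisotropic vector and peeling off a $1$-dimensional orthogonal summand when one is available, and otherwise picking two isotropic vectors $u,v$ with $d(u,v)\neq 0$ and peeling off the hyperbolic plane $\langle u,v\rangle$. The resulting decomposition is still fully refined for the same reasons as above, and the uniqueness of frame size in the simple Jordan algebra $\mathfrak{H}(\Adj(d))$ ensures maximality is unaffected by the mixture of $1$- and $2$-dimensional summands.
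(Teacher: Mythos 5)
Your part (ii) is correct and takes a mildly different route from the paper: you pass from a self-adjoint idempotent $(f,f^t)$ to an ordinary idempotent $f$ of $M_n(K)$ and argue via rank, whereas the paper inspects the corner ring $(E_{ii},E_{ii})\bigl(M_n(K)\oplus M_n(K)\bigr)(E_{ii},E_{ii})\cong K\oplus K$ and observes that its two proper idempotents fail to be self-adjoint. Both arguments work; yours is slightly more computational, the paper's slightly more structural.

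The gap is in part (i), in the characteristic-$2$ symmetric non-alternating case. Your proposed remedy --- peel off an anisotropic line whenever one exists, otherwise peel off a hyperbolic plane --- is not correct, and the justification you give (``uniqueness of frame size in the simple Jordan algebra'') is false in this setting. Take $K=\GF(2)$, $n=3$, and $d$ with Gram matrix $I_3$. Then $\{e_1,e_2,e_3\}$ is an orthogonal basis giving a self-adjoint frame of size $3$. But $w=(1,1,1)$ is also anisotropic, $w^{\perp}=\{v:\sum v_i=0\}$ carries the restriction of $d$ which is alternating (every $v\in w^{\perp}$ has $d(v,v)=\sum v_i^2=\sum v_i=0$), and the projection onto this $2$-dimensional alternating plane is self-adjoint-primitive (no totally isotropic proper orthogonal summand). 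So $\{e_w,1-e_w\}$ is a frame of size $2$. Your algorithm is free to pick $w$ as its first anisotropic vector and then halt with this size-$2$ frame, which is not of maximum size; nothing in your procedure forces the anisotropic vector to be chosen so that its orthogonal complement stays non-alternating. The uniqueness-of-capacity theorem in Jordan theory is stated for frames of \emph{division} idempotents, and the projection onto a hyperbolic plane is primitive but not division, which is exactly why primitive frames can have different sizes here. The paper sidesteps this by appealing to the classical fact that a nondegenerate symmetric non-alternating form over a finite field of characteristic $2$ has a full orthogonal basis (so the maximum frame size is $n$) together with a deterministic polynomial-time orthogonalization procedure in which the anisotropic vector is chosen with care, not greedily.
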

\begin{proof}
$(i)$. 
\emph{Algorithm}.  If $d$ is alternating, compute a hyperbolic basis
$\mathcal{X}$ for $d$.  If $d$ is symmetric (and non-alternating
if $K$ has characteristic $2$) or Hermitian, then compute an 
orthogonal basis $\mathcal{X}$ for $d$.  Return 
$\mathcal{E}(\{\langle x\rangle: x\in \mathcal{X}\})$.

\emph{Correctness}.  
By \corref{coro:perp-idemp}
we know that the set of frames of $\Sym(d)$ is in bijection with the 
fully refined $\perp$-decompositions of $d$.  As $d$ is a classical form the fully refined $\perp$-decomposition of $d$ are parameterized by standard bases; i.e. a bases $\mathcal{X}$ of $d$ such that for each $x\in \mathcal{X}$ there is a unique $y\in \mathcal{X}$ such that
$d(x,y)\neq 0$.  If $d$ is alternating, this makes $\mathcal{X}$
a hyperbolic basis, and any two hyperbolic bases of $d$ have the
same size $2m$, where $m$ is the Witt index of $d$ (and also the
size of a maximal $\perp$-decomposition of $d$).  If $d$ is 
symmetric not in characteristic $2$, or Hermitian in any characteristic, then $d$ has an orthogonal basis.  The resulting $\perp$-decomposition of $d$ has maximum possible size.  Finally, if $K$ has characteristic $2$ and $d$ is symmetric but non-alternating, then $d$ has an orthogonal basis, and that produces a $\perp$-decomposition of maximum possible size.

\emph{Timing}.  Finding a standard basis of $d$ can be done by 
standard linear algebra at a cost of $O(n^3)$ operations in $K$.

$(ii)$.  Fix $1\leq i\leq n$.  Clearly $(E_{ii},E_{ii})^{\bullet}=(E_{ii}^t,E_{ii}^t)=(E_{ii},E_{ii})=(E_{ii},E_{ii})^2$
$(E_{ii},E_{ii})$ is a self-adjoint idempotent. The 
proper idempotents of $(E_{ii},E_{ii})M_n(K)\oplus M_n(K)(E_{ii},E_{ii})\cong K\oplus K$ are $(E_{ii},0)$ and $(0,E_{ii})$, neither of which is self-adjoint.  Thus, $(E_{ii},E_{ii})$ is a self-adjoint-primitive idempotent.
\end{proof}

\begin{thm}\label{thm:find-frame}
There is a Las Vegas polynomial-time algorithm which, 
given a $*$-ring $(R,*)$ with $R\leq \End V$, $V$ an abelian $p$-group, returns a self-adjoint frame of $(R,*)$ of maximum possible size.
\end{thm}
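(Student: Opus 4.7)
The plan is to reduce to the $*$-simple case already handled by \propref{prop:simple-frame}, via passage to the $*$-semisimple quotient. Invoke the algorithm of \corref{coro:*-simple-factors} to obtain the set $\Gamma = \{\gamma : (R,*) \to (T_\gamma, *)\}$ of $*$-ring epimorphisms onto the $*$-simple quotients of $R$ corresponding to the maximal $*$-ideals. By \lemref{lem:J(R)} and the Wedderburn structure theory, $J(R) = \bigcap_{\gamma \in \Gamma} \ker \gamma$ is a $*$-ideal and the product $\bar\gamma : (R,*) \to \prod_{\gamma \in \Gamma}(T_\gamma, *)$ is an effective $*$-ring epimorphism with kernel $J(R)$. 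For each $\gamma$, apply \propref{prop:simple-frame} to construct a self-adjoint frame $\mathcal{E}_\gamma$ of $(T_\gamma, *)$ of maximum possible size; then $\bar{\mathcal{E}} := \bigsqcup_{\gamma \in \Gamma} \mathcal{E}_\gamma$ is a self-adjoint frame of $(R/J(R),*)$ of maximum possible size.

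Next, lift $\bar{\mathcal{E}}$ to a self-adjoint frame of $(R,*)$ of equal cardinality, proceeding by induction on $|\bar{\mathcal{E}}|$. Pick $\bar{e} \in \bar{\mathcal{E}}$ and apply \lemref{lem:pullback} to $\bar\gamma$ to obtain a self-adjoint $\tilde{e} \in \mathfrak{H}(R,*)$ with $\tilde{e}\bar\gamma = \bar{e}$; then $\tilde{e}^2 - \tilde{e} \in J(R)$ is nilpotent, and \lemref{lem:lift-idemp} produces a self-adjoint idempotent $\hat{e} \in \mathfrak{H}(R,*)$ with $\hat{e} \equiv \tilde{e} \pmod{J(R)}$. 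Since $\hat{e}^* = \hat{e}$, the corner $(1-\hat{e})R(1-\hat{e})$ is a $*$-subring whose $*$-semisimple quotient is naturally $(1-\bar{e})(R/J(R))(1-\bar{e})$, on which $\bar{\mathcal{E}} \setminus \{\bar{e}\}$ restricts to a maximum self-adjoint frame. Recursing produces a pairwise orthogonal family of self-adjoint idempotents in $R$ summing to $1$.

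The hard part is verifying that the resulting set has \emph{maximum} cardinality among self-adjoint frames of $(R,*)$, which reduces to showing that self-adjoint-primitivity is preserved by the quotient map $R \to R/J(R)$. For one direction, any orthogonal decomposition $\bar{e} = \bar{f}_1 + \bar{f}_2$ with nonzero $\bar{f}_i \in \mathfrak{H}(R/J(R),*)$ lifts via \lemref{lem:lift-idemp} inside $\hat{e}R\hat{e}$ to a nontrivial orthogonal self-adjoint decomposition of $\hat{e}$ in $\mathfrak{H}(R,*)$. Conversely, any orthogonal decomposition $\hat{e} = \hat{f}_1 + \hat{f}_2$ in $\mathfrak{H}(R,*)$ projects to one of $\bar{e}$ in $\mathfrak{H}(R/J(R),*)$; since $J(R)$ is nilpotent, no nonzero idempotent lies in $J(R)$, so the summands remain proper modulo $J(R)$. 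This bijection between self-adjoint frames across the quotient, combined with the maximality of each $\mathcal{E}_\gamma$ supplied by \propref{prop:simple-frame}, yields the claim. The overall runtime is dominated by the Las Vegas step within \corref{coro:*-simple-factors}, with the recursive lifting contributing only polynomially many applications of \lemref{lem:pullback} and \lemref{lem:lift-idemp}, each polynomial in $\rank R$.
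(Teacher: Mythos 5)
Your proposal follows essentially the same route as the paper: apply \corref{coro:*-simple-factors} to obtain the $*$-simple quotients, build a maximum self-adjoint frame on each via \propref{prop:simple-frame}, and lift the resulting frame of $R/J(R)$ back to $R$ using \lemref{lem:pullback} and \lemref{lem:lift-idemp}.

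One genuine difference worth noting, which is a point in your favor: the paper lifts all the pulled-back elements simultaneously by applying \eqref{eq:lift-idemp} to each $f\in\mathcal{F}_\gamma$, and asserts the result is a frame; strictly speaking, applying \eqref{eq:lift-idemp} independently to two elements that are orthogonal modulo $J(R)$ does not by itself guarantee that the lifted idempotents are orthogonal in $R$. Your recursive corner-ring scheme handles this cleanly: after lifting $\hat e$, you pass to the $*$-subring $(1-\hat e)R(1-\hat e)$, so subsequent lifts are automatically orthogonal to $\hat e$, and by induction to each other. You also spell out the two-sided argument that self-adjoint-primitivity passes to and is reflected from $R/J(R)$ (using that $J(R)$ is nilpotent and so contains no nonzero idempotents, and that a proper decomposition of $\bar e$ lifts inside $\hat e R\hat e$), which is the content the paper compresses into its appeal to \lemref{lem:lift-idemp}. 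So this is the same proof strategy, but your version is more careful about both orthogonality of the lifted frame and about the maximality claim.
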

\begin{proof}
\emph{Algorithm.}
Use the algorithm for \corref{coro:*-simple-factors} to compute a set $\Gamma$ of $*$-epimorphisms onto simple $*$-algebras, one for each maximal $*$-ideal of $(R,*)$.  For each $\gamma:(R,*)\to (T,*)\in \Gamma$, use the algorithm for \propref{prop:simple-frame} to compute a self-adjoint frame $\mathcal{E}_{\gamma}$ of $(T,*)$ of maximum possible size.  Use the algorithm for \corref{coro:*-simple-factors}.$(iii)$ to pullback $\mathcal{E}_{\gamma}$ to a set
\begin{equation*}
	\mathcal{F}_{\gamma}=\{f\in R: f^2\equiv f \pmod{pR}, 
		f^*\equiv f \pmod{pR}\},
\end{equation*}
with $\mathcal{F}_{\gamma}\gamma=\mathcal{E}_{\gamma}$.  Apply \eqref{eq:lift-idemp} to the members of $\mathcal{F}_{\gamma}$ to create $\mathcal{G}=\{\hat{f} : f\in\mathcal{F}_{\gamma},\gamma\in \Gamma\}$.  Return $\mathcal{G}$.

\emph{Correctness}.
Evidently, $\mathcal{E}:=\sqcup_{\gamma\in\Gamma} \mathcal{E}_{\gamma}$ is a self-adjoint frame of $(R/J(R),*)$ of maximum possible size.  The pullback $\mathcal{F}:=\sqcup_{\gamma\in\Gamma}\mathcal{F}_{\gamma}$ consists of self-adjoint elements of $(R,*)$ for which $\mathcal{F}\gamma=\mathcal{E}$ and the two sets are in bijection. By \lemref{lem:lift-idemp}, the return $\mathcal{G}$ is a self-adjoint frame of $(R,*)$ of maximum possible size.

\emph{Timing.}  The algorithm for \corref{coro:*-simple-factors} uses $O(\rank^5 V)$ operations $\mathbb{Z}_{p^e}$.  Fix $\gamma:(R,*)\to(T_{\gamma},*)\in \Gamma$ with $T_{\gamma}=\End_K W_{\gamma}$.  \propref{prop:simple-frame} uses $O(\rank^3 W_{\gamma})$ operations in $K_{\gamma}$; thus, $O(\log^3 W_{\gamma})$ operations in $\mathbb{Z}_p$.  Since $\sum_{\gamma\in\Gamma} \rank W_{\gamma}$ is at most $\rank V$, it follows that this stage takes at most $O(\log^3 |V|)$ operations in $\mathbb{Z}_p$.  

The algorithms for \lemref{lem:pullback} uses $O(\rank^3 T_{\gamma})$ operations in $\mathbb{Z}_{p^e}$.  Since the bases computed in \lemref{lem:pullback} can be reused for each application with respect to a fixed $\gamma$, it follows that the total number of operations in $\mathbb{Z}_{p^e}$ uses $$O\left(\sum_{\gamma\in \Gamma}\rank^3 T_{\gamma}\right)=O\left(\sum_{\gamma\in \Gamma} \rank^6 W_{\gamma}\right)=O(\log^6 |V|)$$ operations in $\mathbb{Z}_{p^e}$.
\end{proof}

\section{Proof of Theorem \ref{thm:main}}\label{sec:final}
\begin{proof}[Proof of \thmref{thm:main}]
\noindent\emph{Algorithm}.
Given a finite $p$-group $P$ of class $2$, compute bases for $P/Z(P)$
and $P'$ and compute a structure constant
representation of $b:=\Bi(P)$ (which is straightforward from
the definitions in Section \ref{sec:bi-grp} and \eqref{eq:bilinear-comp}).

Next, compute a basis for $\Adj(b)$ (Section \ref{sec:adj-sym-algo}).  Apply \thmref{thm:find-frame} to find a self-adjoint frame $\mathcal{E}$ of $\Adj(b)$ of maximum possible size.  Induce a fully refined $\perp$-decomposition $\mathcal{V}=\{(P/Z(P))e:e\in\mathcal{E}\}$ of $b$ (cf. \corref{coro:perp-idemp}).

Apply \corref{coro:convert} to produce a fully refined 
central decomposition of $P$.  

\emph{Correctness}.  This follows from \corref{coro:convert},
\corref{coro:perp-idemp}, and \thmref{thm:find-frame}.

\emph{Timing}.
Since $\rank \Adj(b)\leq \log_p^2 [P:Z(P)]^2\leq \log^2 [P:P']$, 
the total number of operations in $\mathbb{Z}_{p^e}$ lies in
$O(\log^6 [P:P'])$.

\emph{Deterministic version}
Suppose that $p$ is small ($p\leq\log^c |P|$ for some constant $c$).  
Here, the Las Vegas method of \cite{Ivanyos:fast-alge} can be replaced
by the deterministic methods of \cite{Ronyai} in the algorithm
of \thmref{thm:simple-factors}.  Consequently, every Las Vegas algorithm is replaced by a deterministic algorithm.
\end{proof}

\subsection{Bottlenecks}\label{sec:bottlenecks}

The main bottleneck in practice is computing generators for 
$\Adj(\Bi(P))$ for a given $p$-group $P$.  Examples carried with in collaboration with P. A. Brooksbank \cite{Brooksbank-Wilson:*-ring,Brooksbank-Wilson:isom} show that with a group of size $p^{40}$, for $p\in\{5,7,11\}$, a conventional laptop used roughly 5 seconds of real-time to compute generators for $\Adj(\Bi(P))$ and only milliseconds to determine the $*$-ring structure of $\Adj(\Bi(P))$.  Sometimes this occurs because the rank of $\Adj(\Bi(P))$ can be small as compared to the rank of $P$.  However, examples of groups of order $p^{196}$ with intentionally large adjoint $*$-rings with radicals and multiple $*$-simple factors still spend most of the time computing generators for $\Adj(\Bi(P))$, roughly 1 hour as compared to the 1 minute spent in identify the ring structure.  For details see \cite{Brooksbank-Wilson:isom}.

\section{Related results}\label{sec:closing}

We summarize some of the related applications of the algorithm
and methods for \thmref{thm:main}.

\subsection{Central decompositions of Lie rings}

There is related problem of central decomposition of nilpotent Lie
ring $L$ of class $2$; see \cite[p.608-609]{Bond}.  Though we do not 
require $L$ be an algebra over a field, we assume that multiplication 
in $L$ is $K$-bilinear for some commutative ring $K$ (not necessarily
finite or of positive characteristic) for which computation is feasible either in polynomial-time or tolerable in practice, so we call $L$ a Lie $K$-algebra.  $L$ should be specified by reasonable means, for instance, generated by matrices 
under the usual commutator bracket, or given with a basis and structure
constants.  

\begin{thm}\label{thm:Lie}
Suppose that $K$ is a commutative local ring with an oracle to factor
polynomials in $K[x]$.  Then, there is a Las Vegas polynomial-time 
algorithm which, given a finite rank nilpotent Lie $K$-algebra of 
class $2$, returns a central decomposition of $L$ of maximum size.
\end{thm}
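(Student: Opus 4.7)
The plan is to mirror the route taken for $p$-groups in Sections \ref{sec:groups}--\ref{sec:final}, replacing the group commutator by the Lie bracket. For a nilpotent Lie $K$-algebra $L$ of class $2$, the bracket induces an alternating $K$-bilinear map $\Bi(L):L/Z(L)\times L/Z(L)\to [L,L]$ of finite rank $K$-modules. The Lie analogues of \propref{prop:cent-perp}, \thmref{thm:cent-indecomp}, and \corref{coro:convert} should hold verbatim: central decompositions of $L$ correspond naturally to $\perp$-decompositions of $\Bi(L)$ together with a direct decomposition of a minimal direct summand of $Z(L)$ containing $[L,L]$. The proofs in \secref{sec:groups} transcribe without change, since they use only class-$2$ bracket identities and the linear algebra of module complements, both of which go through for modules over a commutative local ring.

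Given this reduction, the task is to find a self-adjoint frame of maximum possible size in the $*$-ring $\Adj(\Bi(L))$. First I would compute a $K$-basis for $\Adj(\Bi(L))$ by solving the linear system \eqref{eq:adj-comp} over $K$, exactly as in \secref{sec:adj-sym-algo}. Next I would invoke the Lie analogue of \thmref{thm:find-frame}: reduce $\Adj(\Bi(L))$ modulo the maximal ideal $\mathfrak{m}$ of $K$ to obtain a finite-dimensional $*$-algebra over the residue field $k=K/\mathfrak{m}$, apply the Ronyai--Friedl--Ivanyos machinery to find its maximal $*$-ideals and simple $*$-factors, classify each as classical or exchange via \thmref{thm:*-coord} and \propref{prop:intoAdj}, and build a self-adjoint frame of maximum size in the semisimple quotient via \propref{prop:simple-frame}. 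Lifting that frame back through the polynomial identity \eqref{eq:lift-idemp} gives the desired frame in $\Adj(\Bi(L))$, since \lemref{lem:lift-idemp} depends only on polynomial identities and is insensitive to the base ring. Pushing the frame through \corref{coro:perp-idemp} and the Lie version of \corref{coro:convert} then returns a central decomposition of $L$ of maximum possible size.

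The hard part will be the semisimple/radical decomposition of $\Adj(\Bi(L))$ over a general commutative local ring. The $\mathbb{Z}_{p^e}$ case relied on $pR\le J(R)$ so that reduction to characteristic $p$ was harmless; for a general local $K$ with maximal ideal $\mathfrak{m}$, one still has $\mathfrak{m}R\le J(R)$ for any finite-rank $K$-algebra $R$, so the same reduction strategy applies, and the factorization oracle for $K[x]$ supplies the polynomial-time substitute for the Berlekamp/Cantor--Zassenhaus routines that underwrite \cite{Ivanyos:fast-alge,Ronyai}. Once this ingredient is in place, every remaining step---computing adjoints, running the \textsf{C-MeatAxe} on the reduction to $k$, constructing standard bases for classical forms, and lifting idempotents---is either literally the same as in \secref{sec:*-rings} or entirely routine, and the polynomial-time complexity bound follows as in the proof of \thmref{thm:main}, measured now in matrix operations over $K$.
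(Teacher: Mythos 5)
Your proposal matches the paper's proof, which is essentially a terse version of exactly what you describe: define $\Bi(L)$, compute $\Adj(\Bi(L))$, run \thmref{thm:find-frame} with the factorization oracle substituting for Berlekamp/Cantor--Zassenhaus after reducing modulo the maximal ideal (using $\mathfrak{m}R\leq J(R)$), lift the frame via \lemref{lem:lift-idemp}, and pull back through \propref{prop:reduce} and \corref{coro:convert}. The one point the paper makes explicit that you leave implicit is the identification of the Frattini analogue as $\Phi(L):=J(K)L+[L,L]$ in the r\^{o}le of $\Phi(P)$ when invoking \propref{prop:reduce}; otherwise the routes are identical.
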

\begin{proof}
\emph{Algorithm}.  
Define $\Bi(L):L/Z(L)\times L/Z(L)\to [L,L]$ by 
$\Bi(L)(Z(L)+x,Z(L)+y):=[x,y]$, for all $x,y\in L$.
Compute $\Adj(\Bi(L))$ and use \thmref{thm:find-frame} to find
a self-adjoint frame $\mathcal{E}$ of $\Bi(L)$ (which requires the
polynomial factorization oracle \cite[Section 4,5]{Ronyai}).  
Pullback the decomposition to $L$ and apply the algorithms
for \propref{prop:reduce} and \corref{coro:convert} using 
$\Phi(L):=J(K)L+[L,L]$ in the r\^{o}le of $\Phi(P)$.

\emph{Correctness}.  The proof is the same as \thmref{thm:main}.

\emph{Timing}.  The overall number of operations spent in computing 
$\Adj(b)$ and in \thmref{thm:find-frame} which both lie in 
$O(\rank^6 L)$.
\end{proof}

\begin{remark}
The practicality of \thmref{thm:Lie} depends on the practicality
of the oracle to factor polynomials and working in $K$.  Over $\mathbb{Q}$, factoring polynomials is as difficult as factoring integers and therefore not a polynomial-time process.  However, in practice that ``glitch'' is often of little distress.
\end{remark}

\subsection{Determining the types of centrally indecomposables}

\begin{thm}\label{thm:types}
A $p$-group $P$ of class $2$ is centrally indecomposable 
if, and only if, $Z(P)\leq \Phi(P)$ and $\Adj(\Bi(P))/J(\Adj(\Bi(P)))$
is $*$-isomorphic to one of the following:
\begin{enumerate}
\item \textsf{Orthogonal type}: $GF(p^e)$ with identity involution,
\item \textsf{Unitary type}:
$GF(p^e)$, $2|e$,  with field involution of order $2$,
\item \textsf{Exchange type}:
$GF(p^e)\oplus GF(p^e)$ with involution $(x,y)^*:=(y,x)$,
for all $x,y\in GF(p^e)$; or
\item \textsf{Symplectic type}: $M_2(GF(p^e))$ with involution
\begin{equation}
\begin{bmatrix} a & b \\ c & d\end{bmatrix}^*
	:= \begin{bmatrix} d & -b \\ -c & a\end{bmatrix},
	\qquad \forall a,b,c,d\in GF(p^e).
\end{equation}
\end{enumerate}
\end{thm}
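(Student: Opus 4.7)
The plan is to stitch together the translations already built in Sections \ref{sec:groups}, \ref{sec:adj}, and \ref{sec:*-rings}, so that central indecomposability is expressed purely as a primitivity condition on $1$ in the semisimple quotient of $\Adj(\Bi(P))$, and then to enumerate the $*$-simple $*$-rings in which $1$ is self-adjoint-primitive.

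First I would apply \thmref{thm:cent-indecomp} to replace ``$P$ is centrally indecomposable'' by the pair of conditions ``$Z(P)\leq \Phi(P)$'' and ``$\Bi(P)$ is $\perp$-indecomposable.'' The hypothesis $Z(P)\leq \Phi(P)$ already appears in the statement of the theorem, so it suffices to re-express $\perp$-indecomposability of $b:=\Bi(P)$ in ring-theoretic terms. By \corref{coro:perp-idemp}, fully refined $\perp$-decompositions of $b$ biject with frames of $\Sym(b)=\mathfrak{H}(\Adj(b),*)$; hence $b$ is $\perp$-indecomposable if and only if $\{1\}$ is a frame, that is, $1$ is self-adjoint-primitive in $\Adj(b)$.

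Next I would push this condition through the Jacobson radical. By \lemref{lem:lift-idemp}, any self-adjoint idempotent of $\bar R := \Adj(b)/J(\Adj(b))$ lifts (via the formula \eqref{eq:lift-idemp}) to a self-adjoint idempotent of $\Adj(b)$, and the lift preserves orthogonality and sums to $1$; conversely, idempotents of $\Adj(b)$ descend to $\bar R$. Thus $1$ is self-adjoint-primitive in $\Adj(b)$ if and only if it is self-adjoint-primitive in $\bar R$. Now \corref{coro:*-simple-factors} decomposes $\bar R$ as a product $\bigoplus_{\gamma\in\Gamma}T_\gamma$ of $*$-simple $*$-rings; if $|\Gamma|\geq 2$, the identities of the factors provide a nontrivial orthogonal decomposition of $1$ by self-adjoint idempotents, so $\perp$-indecomposability forces $\bar R$ itself to be $*$-simple.

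Finally I would enumerate the $*$-simple $*$-rings over $\mathbb{Z}_p$ in which $1$ is self-adjoint-primitive. By \thmref{thm:*-coord} and \propref{prop:intoAdj}, each such ring is either of exchange type $(M_n(K)\oplus M_n(K),\bullet)$ with $K=GF(p^e)$, or classical of the form $\Adj(d)$ for a non-degenerate symmetric, alternating, or Hermitian $K$-form $d$ on $K^n$. \propref{prop:simple-frame} supplies a maximum-size self-adjoint frame in each case: $n$ idempotents in the exchange case, $n$ idempotents for symmetric or Hermitian $d$ (via an orthogonal basis), and $n/2$ idempotents for alternating $d$ (via a hyperbolic basis). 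Imposing that this maximum equals $1$ singles out precisely four possibilities: $(K\oplus K, \text{swap})$ giving exchange type (3); $K$ with identity involution (from a one-dimensional symmetric, non-alternating form) giving orthogonal type (1); $K$ with an order-$2$ Galois involution, which forces $2\mid e$, giving unitary type (2); and $M_2(K)$ with the symplectic involution (from a single hyperbolic pair) giving symplectic type (4).

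The main obstacle is the alternating case in the final enumeration, since the dimension count is different there: ruling out fully refined $\perp$-decompositions of size larger than $1$ requires that $d$ admit no non-trivial orthogonal splitting, which for an alternating form over a field means the underlying space consists of a single hyperbolic plane; one then identifies $\Adj(d)$ with $M_2(K)$ under the symplectic involution displayed in the statement. The remaining cases, as well as the check that each of the four listed $*$-rings really has $1$ as its unique self-adjoint-primitive idempotent, reduce to direct inspection: in each type the self-adjoint elements form a field (or the diagonal copy of $K$ in $K\oplus K$), whose only idempotents are $0$ and $1$.
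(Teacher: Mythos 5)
Your proposal is correct and follows essentially the same route as the paper's proof: reduce via \thmref{thm:cent-indecomp} and \corref{coro:perp-idemp} to the statement that $1$ is self-adjoint-primitive, pass to the semisimple quotient (where you helpfully make the idempotent-lifting step via \lemref{lem:lift-idemp} explicit, a step the paper leaves implicit), observe that this forces $*$-simplicity, and then enumerate the $*$-simple cases via \thmref{thm:*-coord}, \propref{prop:intoAdj}, and \propref{prop:simple-frame}. The only quibble is a cosmetic one at the end: in the symplectic case over characteristic $2$ the self-adjoint elements of $M_2(K)$ do not literally form a field, though $0$ and $1$ are still the only self-adjoint idempotents, so the conclusion stands.
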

\begin{proof}
By \thmref{thm:cent-indecomp} it remains to show that $(S,*):=\Adj(\Bi(P))/J(\Adj(\Bi(P)))$ is one of the algebras listed.  By \corref{coro:perp-idemp}, we know $(S,*)$ has no proper self-adjoint-primitive idempotents.  This makes $(S,*)$ a $*$-simple ring. 

If $S$ is classical, then
by \propref{prop:intoAdj} it follows that $S$ is $*$-isomorphic to
$\Adj(d)$ for a nondegenerate symmetric, alternating, or Hermitian form 
$d:K^n\times K^n\to K$.  By \corref{coro:perp-idemp}, $d$ must be 
$\perp$-indecomposable, so $n=1$ if $d$ is symmetric or
Hermitian, or $n=2$ if $d$ is alternating.  This handles cases
$(1)$, $(2)$, and $(4)$.  

If $S$ has an exchange involution, then by 
\propref{prop:simple-frame}.$(ii)$,
$S$ must be $*$-ring isomorphic to the $*$-ring in $(3)$.
\end{proof}

\begin{remark}
Examples of the orthogonal, exchange, and symplectic type 
were first given in \cite[Section 7]{Wilson:unique}. 
Appendix \ref{app:simples} includes new examples including the first
examples of unitary type.
\end{remark}

\begin{coro}
There is polynomial-time algorithm which, given a 
finite centrally indecomposable $p$-group of class 2, returns
the type of the group as listed in \thmref{thm:types}.
\end{coro}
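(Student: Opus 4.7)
The plan is to chain the algorithms already built up in Sections \ref{sec:adj} and \ref{sec:*-rings} and then read off the type from the output. First I would compute the commutation bilinear map $b:=\Bi(P)$ using the procedure described in Section \ref{sec:bi-grp}, then compute a basis for $\Adj(b)$ by solving the linear system \eqref{eq:adj-comp} as in Section \ref{sec:adj-sym-algo}. Next I would apply the algorithm of \corref{coro:*-simple-factors} to $\Adj(b)$; since $P$ is centrally indecomposable, \thmref{thm:types} guarantees that $\Adj(b)/J(\Adj(b))$ is $*$-simple, so the returned set $\Gamma$ has a single element $\gamma:\Adj(b)\to(T,*)$.

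It then suffices to classify $(T,*)$ as one of the four types. The output of \corref{coro:*-simple-factors} already presents $(T,*)$ in one of two normal forms: either (a) $(M_n(K)\oplus M_n(K),\bullet)$ with the exchange involution, or (b) $\Adj(d)$ for a nondegenerate symmetric, alternating, or Hermitian $K$-form $d:K^n\times K^n\to K$. In case (a), \thmref{thm:types} forces $n=1$ (otherwise $P$ would not be centrally indecomposable) and we return \textsf{exchange type}. In case (b), the algorithm of \propref{prop:intoAdj} has already exhibited the matrix $D$ and Galois element $\sigma$ describing $d$, so the flavor of $d$ can be read off in constant time: if $\sigma\ne 1$ we return \textsf{unitary type} (and $n=1$ is forced); if $\sigma=1$ and $d$ is alternating we return \textsf{symplectic type} (with $n=2$); and if $\sigma=1$ with $d$ symmetric non-alternating we return \textsf{orthogonal type} (with $n=1$).

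Correctness follows immediately from \thmref{thm:types} together with the correctness statements of \corref{coro:*-simple-factors} and \propref{prop:intoAdj}: each possibility for $(T,*)$ listed in \thmref{thm:types} falls into exactly one of the branches above, and conversely each branch produces precisely the corresponding type. The timing is dominated by the computation of $\Adj(b)$ and the call to \corref{coro:*-simple-factors}; the former uses $O(\log^4[P:P'])$ operations in $\mathbb{Z}_{p^e}$ (Section \ref{sec:adj-sym-algo}) and the latter is Las Vegas polynomial-time (or deterministic polynomial-time when $p$ is small, as in the deterministic variant of \thmref{thm:main}), so the whole procedure runs within the same bounds.

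The only nonroutine step is the identification in case (b) of which flavor of form $d$ has been produced, and this is trivial once \propref{prop:intoAdj} has been invoked because the pair $(D,\sigma)$ from its proof directly encodes whether $d$ is symmetric, alternating, or Hermitian. Consequently there is no real obstacle beyond correctly invoking the earlier machinery; the content of the corollary is essentially that \thmref{thm:types} gives a computable characterization.
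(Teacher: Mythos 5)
Your proof is correct and follows essentially the same route the paper takes: the paper's proof is a one-line citation of \thmref{thm:simple-factors}, \propref{prop:intoAdj}, and \thmref{thm:types}, and your argument is just an explicit unwinding of that chain through \corref{coro:*-simple-factors} and the normal forms it produces. The only small imprecision is the complexity bound for computing a basis of $\Adj(b)$, which Section \ref{sec:adj-sym-algo} gives as $O(|\mathcal{X}|^4|\mathcal{Z}|)$ (closer to $O(\log^5 |P|)$ than the $O(\log^4[P:P'])$ you state), but this does not affect polynomial-time correctness.
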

\begin{proof}
This is immediate from \thmref{thm:simple-factors}, 
\propref{prop:intoAdj}, and \thmref{thm:types}.
\end{proof}

\subsection{Testing indecomposability}

Suppose that we are only interested in testing if a $p$-group $P$ 
of class $2$ is centrally indecomposable.  By \thmref{thm:types}, 
the key step is to determine that $\Adj(b)/J(\Adj(b))$ is 
one of the four algebras in that list.  That process is easier 
in the present framework as it requires that there be at most 2 isomorphism types of simple modules in the composition series of $V$ as an $\Adj(b)$-module.  Furthermore, the simple modules have dimension 1 or 2 when viewed over the correct field, i.e. $Z(\Adj(b)/J(\Adj(b)))$.  This can be determined using the absolute irreducibility test of the MeatAxe \cite{MeatAxe1},  thus reducing the time in those stages to $O(\log^4 |V|)$-time.  Unfortunately, the bottleneck
remains in computing generators for $\Adj(b)$, which still requires
$O(\log^5 |V|)$-time.

\subsection{Finding orbits of central decompositions}
In \cite{Wilson:unique}, the action of the automorphism group of
a $p$-group $P$ of class $2$ and exponent $p$ was studied.  
Though not presented in detail, it is clear
that the methods here can be used to find a representative fully refined central
decomposition for each $C_{\Aut P}(Z(P))$-orbit as described in \cite[Corollary 5.23.(iii)]{Wilson:unique}.  The necessary step is to choose an orthogonal
basis in \propref{prop:simple-frame} with the desired address in the sense of
\cite[Definition 5.1]{Wilson:unique}.  

\subsection{Finding some new characteristic and fully invariant subgroups}

We now show how the $*$-ring $\Adj(\Bi(P))$ can be used to uncover
new characteristic and fully invariant subgroups of $P$.

Recall that $\Adj(\Bi(P))$ is a subring of $\End V\times (\End V)^{op}$
where $V:=P/Z(P)$.  Thus, $\Adj(\Bi(P))$ acts on $V$ by
$v(f,g):=vf$, for all $v\in V$ and all $(f,g)\in \Adj(\Bi(P))$.  
If $I$ is a right ideal of $\Adj(\Bi(P))$ then $VI$ is a submodule of $V$.
Recall that an ideal $I$ of a $*$-ring $R$ is \emph{$*$-characteristic}
(\emph{$*$-fully invariant}) if 
$I\varphi=I$ for all $*$-ring automorphisms (endomorphisms) of $R$.
We prove:
\begin{thm}\label{thm:char}
For a $p$-group $P$ of class $2$, 
\begin{equation}
	\mathcal{L}:=\{Z(P)\leq L\leq P : L/Z(P)=(P/Z(P))I,
	\textnormal{ I $*$-characteristic in }\Adj(\Bi(P))\}
\end{equation}
is a lattice of characteristic subgroups of $P$.
\end{thm}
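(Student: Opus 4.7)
The plan is to first show that every $L\in\mathcal{L}$ is characteristic in $P$, and then that $\mathcal{L}$ is closed under subgroup intersection and join; both parts rest on converting the action of $\Aut(P)$ on $P$ into an action on $\Adj(\Bi(P))$ by $*$-ring automorphisms.

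Write $V:=P/Z(P)$, $b:=\Bi(P)$, and $R:=\Adj(b)$. Any $\varphi\in\Aut(P)$ preserves the characteristic subgroups $Z(P)$ and $P'$, and so descends to $\bar{\varphi}\in\GL(V)$ and $\psi\in\GL(P')$ satisfying the intertwining identity $\psi\circ b=b\circ(\bar{\varphi}\times\bar{\varphi})$. A direct calculation using this identity shows that
\[
\alpha_{\varphi}\colon(f,g)\longmapsto(\bar{\varphi}^{-1}f\bar{\varphi},\,\bar{\varphi}^{-1}g\bar{\varphi})
\]
is a $*$-ring automorphism of $R$: the image lies in $R$ because the adjoint relation $b(uf,v)=b(u,vg)$ transports along $\bar{\varphi}$, and commutativity with the swap involution $*$ is immediate because both coordinates of the pair are conjugated by the same $\bar{\varphi}$. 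For a $*$-characteristic ideal $I$ we obtain $I\alpha_{\varphi}=I$, and using the identity $(vr)\bar{\varphi}=(v\bar{\varphi})\alpha_\varphi(r)$ we get $(VI)\bar{\varphi}=(V\bar{\varphi})(I\alpha_{\varphi})=V\cdot I=VI$. Lifting through the projection $P\to V$, the corresponding $L(I)\in\mathcal{L}$ is $\varphi$-fixed, hence characteristic.

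The set of $*$-characteristic ideals of $R$ is itself a lattice under sum and intersection, since any $*$-ring automorphism permutes the ideal lattice and commutes with both operations. The order-preserving map $\iota\colon I\mapsto L(I)$, with $L(I)$ the preimage in $P$ of the submodule $VI$, sends $0$ to $Z(P)$ and $R$ to $P$. Join-closure is immediate from $L(I_{1})L(I_{2})=L(I_{1}+I_{2})$. For intersection-closure, I would put $J:=\{r\in R:Vr\subseteq VI_{1}\cap VI_{2}\}$; this is a two-sided ideal of $R$, and the invariance of $VI_{1}\cap VI_{2}$ under every $\bar{\varphi}$ established above makes $J$ a $*$-characteristic ideal. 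Once the equality $VJ=VI_{1}\cap VI_{2}$ is verified, $L(I_{1})\cap L(I_{2})=L(J)\in\mathcal{L}$, as required.

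The main obstacle is the verification that $\alpha_{\varphi}$ is a well-defined $*$-ring automorphism of $R$: the bookkeeping pushing the intertwining identity through both coordinates of the adjoint pair, and matching it with the swap involution $*$, must be done explicitly. A secondary subtlety is the equality $VJ=VI_{1}\cap VI_{2}$ used in the lattice step; the inclusion $VJ\subseteq VI_{1}\cap VI_{2}$ is tautological, but the reverse inclusion would rely on the idempotent structure of $R$ modulo its Jacobson radical developed in Section \ref{sec:*-rings}, together with the lifting of Lemma \ref{lem:lift-idemp}, to realise each element of $VI_{1}\cap VI_{2}$ as $vr$ with $r\in J$.
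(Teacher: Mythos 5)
Your argument that each $L\in\mathcal{L}$ is characteristic is the paper's argument: the same action $(f,g)\mapsto(\bar\varphi^{-1}f\bar\varphi,\bar\varphi^{-1}g\bar\varphi)$ of $\Aut P$ on $\Adj(\Bi(P))$ by $*$-ring automorphisms (the paper writes the second coordinate with the opposite multiplication, but it is the same map), followed by $(VI)\bar\varphi=VI$ and pulling back across the characteristic subgroup $Z(P)$. That part is correct and essentially identical to the paper.

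The lattice step, however, has a genuine gap exactly where you flagged it, and the proposed fix would not close it. Two problems with $J:=\{r\in R: Vr\subseteq VI_1\cap VI_2\}$. First, $J$ is a two-sided ideal but need not be $*$-closed: writing $r=(f,g)$, the module action $vr=vf$ uses only the first coordinate, so $Vf\subseteq VI_1\cap VI_2$ gives no information about $Vg=Vr^*$, hence no reason that $r^*\in J$. Second, even granting $J$ were a $*$-characteristic $*$-ideal, the reverse inclusion $VI_1\cap VI_2\subseteq VJ$ is not something idempotent lifting (\lemref{lem:lift-idemp}) delivers: the $\Adj(b)$-submodule $VI_1\cap VI_2$ simply need not be of the form $VI$ for any ideal $I$, and there is no reason each of its elements is expressible as $vr$ with $Vr\subseteq VI_1\cap VI_2$. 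So $L(I_1)\cap L(I_2)\in\mathcal{L}$ is not established, and is likely false in general.

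The good news is that closure under set-theoretic intersection is not what the statement asserts, and the lattice claim follows more cheaply. $\mathcal{L}$ is a finite poset under inclusion with least element $Z(P)=L(0)$ and greatest element $P=L(R)$. It is closed under joins: $V(I_1+I_2)=VI_1+VI_2$, so the preimage $L(I_1+I_2)$ equals $L(I_1)L(I_2)$ (a subgroup, as both factors are normal in $P$), which is the smallest subgroup containing both; since $I_1+I_2$ is again a $*$-characteristic $*$-ideal, this join lies in $\mathcal{L}$. A finite join-semilattice with a bottom element is automatically a lattice (the meet of $L_1,L_2$ in $\mathcal{L}$ is the join of all members of $\mathcal{L}$ below both, whether or not this coincides with $L_1\cap L_2$). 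Replacing your $J$-construction by this observation completes the proof without the unverified equality.
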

\begin{proof}
$\Aut P$ acts on $\Adj(\Bi(P))$ via
\begin{equation}
	(f,g)^{\varphi}:=(\varphi|_V^{-1} f\varphi|_V,
		\varphi|_V g\varphi_{V}^{-1}),\qquad
		\forall (f,g)\in \Adj(\Bi(P)), \varphi\in \Aut P.
\end{equation}
That action commutes with the $*$ involution on $\Adj(\Bi(P))$;
so every $*$-characteristic $*$-ideal $I$ of $\Adj(\Bi(P))$ is acted
on by $\Aut P$.  Thus,
$0\leq VI\leq V$ is an $\Aut P$-submodule of $V=P/Z(P)$. 
As $Z(P)$ is characteristic in $P$, pulling back to $P$ proves our claim.
\end{proof}

\begin{remark}\label{rem:full}
There is a bilinear map $\Bi(P,P')$ from $P/P'\times P/P'\to P'$ 
defined analogously to $\Bi(P)$.  This bilinear map 
may be degenerate; thus, $\Adj(\Bi(P,P'))$ is not necessarily a 
$*$-ring.  However, because $P'$ is fully invariant, it follows that
\begin{equation}
	\mathcal{L}:=\{P'\leq L\leq P : L/P'=(P/P')I,
	\textnormal{ I fully invariant in }\Adj(\Bi(P,P'))\}
\end{equation}
is a lattice of fully invariant subgroups of $P$.
\end{remark}

Using the radical and semisimple structure of $\Adj(\Bi(P))$ it is 
easy to identify various specific $*$-characteristic and
$*$-fully invariant $*$-ideals of $\Adj(\Bi(P))$.
\begin{ex}\label{ex:char}  Given a $*$-ring $(R,*)$:
\begin{enumerate}[(i)]
\item if $J$ is the Jacobson radical of $R$, then
$\{J^i:i\in\mathbb{Z}^+\}$ is a flag of $*$-fully invariant $*$-ideals
of $(R,*)$; and
\item the intersection of all maximal $*$-ideals with $*$-ring 
isomorphic quotients is a $*$-fully invariant $*$-ideal of $(R,*)$.
\end{enumerate}
\end{ex}

\begin{coro}
There are polynomial-time algorithms which, given a $p$-group $P$ of
class $2$, return the characteristic and fully invariant 
subgroups of $P$ resulting from \exref{ex:char} and \thmref{thm:char}
or \remref{rem:full}.
\end{coro}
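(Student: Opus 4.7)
The plan is to compute $\Adj(\Bi(P))$ (respectively $\Adj(\Bi(P,P'))$) explicitly via Section \ref{sec:adj-sym-algo}, produce each $*$-ideal (or fully invariant ideal) listed in \exref{ex:char}, and then convert these ideals into subgroups of $P$ by the correspondence $I\mapsto VI$ with $V:=P/Z(P)$ (resp.\ $V:=P/P'$), followed by pulling the submodule $VI$ back through the quotient map to produce the containing subgroup of $P$ guaranteed by \thmref{thm:char} or \remref{rem:full}.

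For part $(i)$ of \exref{ex:char}, the Jacobson radical $J:=J(\Adj(\Bi(P)))$ is produced as a byproduct of the Wedderburn complement computation in the proof of \thmref{thm:simple-factors}: after reducing modulo $p$ and applying \cite[Corollary 1.5]{Ivanyos:fast-alge} to obtain $\bar{R}=\bar{S}\oplus J(\bar{R})$, we pull $J(\bar{R})$ back through the projection $R\to R/pR$ (adding in $pR$, which sits inside $J(R)$). Successive powers $J^i$ are then obtained by iterated matrix multiplication over $\mathbb{Z}_{p^e}$ and the chain stabilizes within $O(\rank V)$ steps. For part $(ii)$ we invoke \corref{coro:*-simple-factors} to obtain the set $\Gamma$ of $*$-epimorphisms $\gamma\colon(\Adj(\Bi(P)),*)\to(T_\gamma,*)$ onto the $*$-simple factors, partition $\Gamma$ by $*$-isomorphism class of the codomain, and intersect the kernels within each class; each resulting intersection is, by construction, a $*$-fully invariant $*$-ideal.

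The main obstacle is deciding $*$-isomorphism of the simple factors $T_\gamma$. For exchange type, \thmref{thm:*-coord}.$(ii)$ reduces this to ring isomorphism, hence to matching the pair $(n,|K|)$. For classical type, \propref{prop:intoAdj} presents $T_\gamma$ as $\Adj(d_\gamma)$ for a nondegenerate form $d_\gamma\colon K^n\times K^n\to K$, and two such $\Adj(d)$'s are $*$-isomorphic precisely when $d$ and $d'$ are similar up to a semilinear change of basis; over the finite field $K$ this is decided in polynomial time by the classical invariants (form type -- alternating, symmetric, or Hermitian; dimension; and, for symmetric forms in odd characteristic, discriminant modulo squares). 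Once the partition of $\Gamma$ is in hand the ideals are produced by intersecting kernels, and for each ideal $I$ the product $VI$ is a matrix-vector computation over $\mathbb{Z}_{p^e}$ whose preimage in $P$ is obtained by lifting a generating set through the quotient map -- both standard linear algebra in the sense of Section \ref{sec:linear}. The same procedure applies verbatim to $\Adj(\Bi(P,P'))$ for \remref{rem:full}, with the simplification that only ring isomorphism of the simple quotients need be tested since $\Adj(\Bi(P,P'))$ is being treated merely as a ring.
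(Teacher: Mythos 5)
Your proposal is correct and makes explicit what the paper's one-line proof (``an obvious application of the $*$-ring structure algorithms given in Section \ref{sec:*-rings}'') leaves to the reader: recovering $J(R)$ from the Wedderburn complement computed in \thmref{thm:simple-factors}, iterating to produce the flag of $J^i$, testing $*$-isomorphism of the simple factors via the classification of classical forms up to semilinear similarity so as to partition $\Gamma$ for \exref{ex:char}.(ii), and pulling the resulting $*$-ideals back to subgroups of $P$ through the module action and the quotient maps. One minor quantitative slip -- the chain $J\supseteq J^2\supseteq\cdots$ stabilizes within $O(\log_p|V|)$ steps (the composition length of $V$ as an $\Adj(\Bi(P))$-module), not $O(\rank V)$ -- does not affect the polynomial-time conclusion.
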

\begin{proof}
This is an obvious application of the $*$-ring structure algorithms
given in Section \ref{sec:*-rings}.
\end{proof}
  
\section{Central products of general groups}\label{sec:general}

We deviate from our focus on $p$-groups of class $2$ to address some
of the situation for central decompositions of general finite groups.

\subsection{Central products and isoclinism}
We apply an equivalence relation on groups introduced by P. Hall
\cite{Hall:isoclinism} which is compatible with 
central products.  This allows for a partial generalization of
the concepts in Section \ref{sec:groups}.

The proof of \thmref{thm:main} concentrates on the
bilinear map $\Bi(P):P/Z(P)\times P/Z(P)\to P'$.
Evidently nonisomorphic $p$-groups can have equivalent
bilinear maps.  Equivalence of bilinear maps $b:V\times V\to W$
and $b':V'\times V'\to W'$ is defined by pairs of linear
maps $(f:V\to V',\hat{f}:W\to W')$ such that:
\begin{equation}
	b'(uf,vf) = b(u,v)\hat{f},\qquad\forall u,v\in V.
\end{equation}
More generally, an \emph{isoclinism} \cite{Hall:isoclinism} 
of groups $G$ and $H$ is a pair 
$(\alpha:G/Z(G)\to H/Z(H),\hat{\alpha}:G'\to H')$ of
group isomorphisms such that
\begin{equation}
	[Z(G)x\alpha,Z(G)y\alpha]=[x,y]\hat{\alpha},
			\qquad \forall x,y\in G.
\end{equation}
Isomorphic groups are immediately isoclinic, but the converse is
false (abelian groups are isoclinic to the trivial group).  
Clearly, $\Bi(P)$ and $\Adj(\Bi(P))$ are group isoclinism invariants of $P$. Moreover, if $G$ and $H$ are general groups and
$\mathcal{K}$ is a central decomposition of $G$, then 
\begin{equation}
	\mathcal{K}\alpha:=\{Z(H)\leq J\leq H:\exists K\in\mathcal{K},
		J/Z(H)=KZ(G)/Z(G)\alpha\}
\end{equation}
is a central decomposition of $H$.  Call a central decomposition
$\mathcal{H}$ of $G$ a $Z(G)$-central decomposition if 
$\mathcal{H}=\mathcal{H}Z(G)$.  Thus we have proved:
\begin{prop}
An isoclinism from a group $G$ to a group $H$ induces a bijection
from the set of $Z(G)$-central decompositions of $G$ and the set of 
$Z(H)$-central decompositions of $H$.  In particular, if $G$ is
centrally indecomposable, then every central decomposition of $H$
has at most one nonabelian member.
\end{prop}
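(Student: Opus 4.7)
The plan is to show that the assignment $\mathcal{K}\mapsto \mathcal{K}\alpha$ is a bijection between $Z(G)$- and $Z(H)$-central decompositions, and then to derive the ``in particular'' clause via one short elementary observation.

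First I would verify that $\mathcal{K}\alpha$ really is a $Z(H)$-central decomposition of $H$ whenever $\mathcal{K}$ is one of $G$. The containment $Z(H)\leq J$ is built into the definition; pairwise commutativity of distinct members $J_1,J_2\in\mathcal{K}\alpha$ uses the isoclinism relation $[Z(G)x\alpha,Z(G)y\alpha]=[x,y]\hat{\alpha}$ to transport $[K_1,K_2]=1$ in $G$ to $[J_1,J_2]=1$ in $H$. Generation and minimality follow from the corresponding properties of $\mathcal{K}$ once one observes that every member of a $Z$-central decomposition contains the relevant center, so the quotient $G/Z(G)\to H/Z(H)$ by $\alpha$ both preserves and reflects the ``generates the whole quotient'' property. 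Running the same construction with $(\alpha^{-1},\hat{\alpha}^{-1})$ gives the inverse map; mutual inversion is immediate since any member of a $Z$-central decomposition is determined by its image modulo the center.

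For the ``in particular'' clause, I would first dispatch the trivial case: if $G$ is abelian, then $H/Z(H)\cong G/Z(G)=1$ forces $H$ abelian, so no central decomposition of $H$ has any nonabelian members. Assume $G$ is nonabelian. Then $\{G\}$ is the only $Z(G)$-central decomposition of $G$, so by the bijection $\{H\}$ is the only $Z(H)$-central decomposition of $H$. Given any central decomposition $\mathcal{J}$ of $H$, define $K_J\leq G$ to be the preimage in $G$ of $(JZ(H)/Z(H))\alpha^{-1}$ and set $\mathcal{K}^{\circ}:=\{K_J : J\in\mathcal{J},\ J\not\leq Z(H)\}$. These subgroups strictly contain $Z(G)$, pairwise commute (isoclinism identity again), and together generate $G$ because $\mathcal{J}$ generates $H$ and each $K_J$ contains $Z(G)$. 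Extracting a minimal generating subfamily $\mathcal{K}'\subseteq\mathcal{K}^{\circ}$ produces a $Z(G)$-central decomposition of $G$, forcing $\mathcal{K}'=\{G\}$; hence some $J^*\in\mathcal{J}$ satisfies $J^*Z(H)=H$.

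The closing step is short but is the crux of the argument: any $J\in\mathcal{J}\setminus\{J^*\}$ commutes with $J^*$ by the central-decomposition axiom and with $Z(H)$ trivially, hence commutes with $J^*Z(H)=H$, so $J\leq Z(H)$ and $J$ is abelian. Thus $\mathcal{J}$ has at most one nonabelian member. I expect the main obstacle to be precisely that minimality does not pass cleanly from $\mathcal{J}$ to $\mathcal{K}^{\circ}$ (several distinct $J$'s can produce the same $K_J$), so one cannot hope to identify $\mathcal{K}^{\circ}$ itself with a $Z(G)$-central decomposition of $G$; passing to the minimal subfamily $\mathcal{K}'$ is the workaround that lets one invoke central indecomposability of $G$ without confronting this collapse directly.
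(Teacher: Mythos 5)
Your argument is correct and follows the route the paper indicates: use the isoclinism bijection $\mathcal{K}\mapsto\mathcal{K}\alpha$ on $Z$-central decompositions, then derive the ``in particular'' clause from the uniqueness of $\{H\}$ among $Z(H)$-central decompositions. The paper states the proposition as an immediate consequence of the preceding construction and does not write out either direction, so your filled-in details are all additions rather than deviations. The one place where care was genuinely required is the step from an arbitrary central decomposition $\mathcal{J}$ of $H$ to a $Z(G)$-central decomposition of $G$: as you note, distinct $J$'s can yield the same $K_J$ and the pulled-back family $\mathcal{K}^\circ$ need not inherit the minimality axiom, so one cannot simply assert that $\mathcal{K}^\circ$ (equivalently, $\mathcal{J}Z(H)$) is a $Z$-central decomposition. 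Passing to a minimal generating subfamily $\mathcal{K}'\subseteq\mathcal{K}^\circ$ is precisely the fix that lets central indecomposability of $G$ force $\mathcal{K}'=\{G\}$, hence $J^*Z(H)=H$ for some $J^*\in\mathcal{J}$, and the final commutator calculation pushes every other member of $\mathcal{J}$ into $Z(H)$. This matches what the paper must implicitly be doing, so the proof is sound.
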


Using group isoclinism we can generalize a conjecture made in
\cite{Wilson:unique}.

Examples such as $D_8\circ D_8\cong Q_8\circ Q_8$ and the similar
problem for odd extraspecial groups  of exponent $p^2$ (see \cite[Theorem 5.5.2]{Gor}) demonstrate that the group isomorphism classes of 
a central decomposition of \emph{maximum} possible size need not be 
the same.  However, we ask:
\begin{quote}
Is the multiset of group isoclinism types of a central decomposition
of maximum possible size a group isoclinism invariant?
\end{quote}
We conjecture that this is true for $p$-groups of class $2$.  If so, then it is probably true for all groups; in particular, the problem for nilpotent groups of larger class and groups with no center can benefit from the uniqueness afforded by the Krull-Remak-Schmidt theorem; compare \cite[Section 4.3.4]{Wilson:thesis}.

\subsection{Idempotents in central products of general groups}
\label{sec:gen}
Let $G$ be any group.
We can define $c:=c(G):G/Z(G)\times G/Z(G)\to G'$ by
\begin{equation}
	c(Z(G)x,Z(G)y):=[x,y],\qquad \forall x,y\in G.
\end{equation}
We also have:
\begin{equation}\label{eq:adj2}
\begin{split}
	\Adj(c) := & \{(f,g)\in \End G/Z(G)\times (\End G/Z(G))^{op} :\\
		& c(Z(G)xf,Z(G)y)=c(Z(G)x,Z(G)yg),\forall x,y\in G\}.
\end{split}
\end{equation}
Obviously, $\Adj(c)$ is closed to products and has an anti-automorphism 
$*:(f,g)\mapsto (g,f)$ of order $2$.  However, unlike $\Adj(\Bi(P))$, 
$\Adj(G)$ need not be a ring since we cannot generally add endomorphisms 
of $G/Z(G)$.  Nonetheless, it follows that:
\begin{prop}
The set of $Z(G)$-central decompositions of $G$ is in bijection with
the set of sets of self-adjoint idempotents of $\Adj(c(G))$.
\end{prop}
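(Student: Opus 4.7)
The plan is to adapt the proofs of \thmref{thm:idemp} and \corref{coro:perp-idemp} to this nonassociative setting, where $\Adj(c)$ is only a monoid with involution rather than a $*$-ring. Following Section~\ref{sec:adj}, I read ``set of self-adjoint idempotents'' as a system $\mathcal{E}\subseteq\Adj(c)$ of pairwise orthogonal self-adjoint idempotents parameterizing a direct decomposition $V:=G/Z(G)=\bigoplus_{e\in\mathcal{E}} Ve$.

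In the forward direction, starting from a $Z(G)$-central decomposition $\mathcal{H}$ of $G$, I would first check that $\{H/Z(G):H\in\mathcal{H}\}$ is a direct product decomposition of $V$: generation follows from $\langle\mathcal{H}\rangle=G$, and the triviality of $H/Z(G)\cap\langle K/Z(G):K\neq H\rangle$ follows from the identity $H\cap\langle\mathcal{H}-\{H\}\rangle\leq Z(G)$. This produces projection endomorphisms $e_H\in\End V$. The essential step is verifying that each $(e_H,e_H)$ belongs to $\Adj(c)$, i.e.\ that $c(\bar{x}e_H,\bar{y})=c(\bar{x},\bar{y}e_H)$ for all $\bar{x},\bar{y}\in V$. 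Writing $x\equiv\prod_K x_K$ and $y\equiv\prod_K y_K$ modulo $Z(G)$ with $x_K,y_K\in K$, and iterating the commutator identity $[ab,c]=[a,c]^b[b,c]$, all cross terms $[x_K,y_{K'}]$ with $K\neq K'$ vanish because distinct members of $\mathcal{H}$ commute elementwise; the expansion collapses to $[x,y]=\prod_K[x_K,y_K]$ in $G'$, and both sides of the adjoint identity then reduce to $[x_H,y_H]$.

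For the reverse direction, given such an $\mathcal{E}$, I would define $H_e:=\pi^{-1}(Ve)$, where $\pi:G\to V$ is the quotient map, so that $Z(G)\leq H_e$ and $H_e/Z(G)=Ve$. Generation of $G$ by $\mathcal{H}:=\{H_e:e\in\mathcal{E}\}$ and the minimality of this generating set are inherited from $V=\bigoplus Ve$. Elementwise commuting of distinct $H_e,H_{e'}$ then follows from the calculation
\begin{equation*}
[g,h]=c(\pi(g),\pi(h))=c(\pi(g)e,\pi(h))=c(\pi(g),\pi(h)e)=c(\pi(g),\pi(h)e'e)=1,
\end{equation*}
using $\pi(g)=\pi(g)e$, $\pi(h)=\pi(h)e'$, self-adjointness of $e$, and orthogonality $e'e=0$. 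The two passages are mutually inverse: projection endomorphisms are uniquely determined by the direct decomposition of $V$, and $\pi^{-1}$ recovers $H$ from $H/Z(G)$ once $Z(G)\leq H$ is known.

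The main obstacle is the commutator expansion in the forward direction. In the $p$-group class $2$ setting of \thmref{thm:idemp}, $G'$ is central and commutators distribute trivially; in the general case, iterating $[ab,c]=[a,c]^b[b,c]$ introduces conjugates that must be tracked carefully. The argument closes because each surviving diagonal term $[x_K,y_K]$ lies in $K$, distinct $K$ centralize one another, and so these terms commute pairwise and may be reassembled in any convenient order to match the simple expression $[x_H,y_H]$ produced by $e_H$.
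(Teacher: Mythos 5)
Your proof is correct and takes essentially the same route the paper intends: the paper's entire proof is the one‑line remark ``the proof is the same as that of Proposition~\ref{prop:cent-perp} and Corollary~\ref{coro:perp-idemp},'' and your argument is just that adaptation carried out explicitly, including the genuinely nontrivial point (which the paper leaves implicit) that the commutator expansion in a non‑class‑2 group still collapses because the cross terms $[x_K,y_{K'}]$ vanish and the surviving conjugating elements centralize everything in sight.
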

\begin{proof}
The proof is the same as that of \propref{prop:cent-perp} 
and \corref{coro:perp-idemp}.
\end{proof}

\subsection{Finding central products of general groups}
To find central decompositions of
 groups $G$ which are either non-nilpotent or nilpotent of class
greater than $2$, it is may be possible to begin by finding
direct decompositions of $G/Z(G)$, and then reduce to 
central decomposition of $G$.  The first polynomial-time algorithm 
to find a direct product decomposition of a finite 
group appeared in \cite[Chapter IV]{Wilson:thesis} along with the
algorithms of \thmref{thm:main} \cite[Chapter III]{Wilson:thesis}. 
 A preliminary inspection
supports the conjecture that a combination of these two results will
produce a polynomial-time algorithm to find fully refined central 
decompositions of arbitrary finite groups.

\section*{Acknowledgments}
Considerable thanks belong to W. M. Kantor for patiently listening
to the creation of these algorithms and encouraging their
development.  Thanks also to E. M. Luks and C.R.B. Wright for 
their comments on earlier versions; to L. Ronyai for 
discussing the state of algorithms for associative rings; to the anonymous referee for suggesting improvements; and finally to 
P. A. Brooksbank for discussions which lead to improvements and
implementation of key components of these algorithms in 
\textsf{MAGMA} \cite{Magma}.

\appendix

\section{Examples of centrally indecomposable groups}\label{app:simples}

We give examples which demonstrate some of the important
aspects of the algorithm of \thmref{thm:main}.  As evidence that all cases considered here can occur, we give $p$-groups which are centrally indecomposable of each of the types listed in \thmref{thm:types}.  Furthermore, our proofs apply the techniques of \thmref{thm:main} in a symbolic fashion illustrating how the methods can be used beyond a computer.

\begin{ex}[A centrally indecomposable group of orthogonal type]
\begin{equation}\label{eq:free-rank-3}
	Or_p=\langle a,b,c~|~a^p, b^p, c^p, [a,b]^p, [a,c]^p, [b,c]^p,
		 \textnormal{ class 2 }\rangle.
\end{equation}
is a special group of order $p^6$ and is centrally indecomposable
of orthogonal type.
\end{ex}
\begin{proof}
Let $P:= Or_p$.
Clearly, $P^p\leq P'=Z(P)$ so $P$ is a special $p$-group of
order $p^6$ and rank $3$.  Therefore $P/Z(P)\cong \mathbb{Z}_p^3$
and $P'\cong \mathbb{Z}_p^3$.  So 
$\Bi(P):\mathbb{Z}_p^3\times\mathbb{Z}_p^3\to \mathbb{Z}_p^3$.  Using
$\{Z(P)s,Z(P)t,Z(P)u\}$ and $\{x:=[a,b],y:=[a,c],z:=[b,c]\}$ as 
ordered bases for $P/Z(P)$ and $P'$ respectively, it is evident from
\eqref{eq:free-rank-3} that $\Bi(P)$ is defined by
$\Bi(P)(u,v)=uBv^t$ for all $u,v\in\mathbb{Z}_p^3$, where:
\begin{equation}
	B :=\begin{bmatrix} 0 & x & y \\ -x & 0 & z\\ -y & -z & 0
		\end{bmatrix}.
\end{equation}
Computing $\Adj(B)$ as in Section \ref{sec:adj-sym-algo} (which is
easily done with symbolic computation on an example of this size; 
compare \cite[Lemma 7.1]{Wilson:unique})
we find 
\begin{equation}
	\Adj(B)=\{(\alpha I_3,\alpha I_3) \in 
		M_3(\mathbb{Z}_p)\times M_3(\mathbb{Z}_p):
		\alpha\in\mathbb{Z}_p\}=\Sym(b),
\end{equation}
which is clearly $*$-isomorphic to $\mathbb{Z}_p$ with identity 
involution.  By \thmref{thm:types}, $P$ is centrally indecomposable
of orthogonal type.  
\end{proof}

\begin{ex}[A centrally indecomposable group of exchange type]
\begin{equation}
\begin{split}
E_p:= &\langle a,b,c,d~|~a^p,b^p,c^p,d^p,[a,c]^p,[a,d]^p,[b,c]^p,[b,d]^p,
	[a,b],[c,d],\textnormal{class }2\rangle
\end{split}
\end{equation}
is a special group of order $p^{8}$ and is centrally indecomposable of
exchange type.
\end{ex}
\begin{proof}
$\Bi(E_p)$ is bilinear map $\mathbb{Z}_p^4\times \mathbb{Z}_p^4\to \mathbb{Z}_p^4$.  With respect
to the bases $$\{Z(E_p)a,Z(E_p)b,Z(E_p)c,Z(E_p)d\}$$ and 
$$\{x:=[a,c],y:=[a,d],
z:=[b,c],w:=[b,d]\},$$ 
$\Bi(E_p)$ is defined by
\begin{equation}
	B := \left[\begin{array}{cc|cc} 
		0 & 0 & x & y\\ 
		0 & 0 & z & w\\
		\hline
		-x & -z & 0 & 0\\ 
		-y & -w & 0 &0
		\end{array}
		\right].
\end{equation}
Evidently,
\begin{equation}
	\Adj(B) =\left\{\left(
	 \begin{bmatrix} \alpha & 0 \\ 0 & \beta\end{bmatrix}\otimes I_2,
	 \begin{bmatrix} \beta & 0 \\ 0 & \alpha \end{bmatrix}\otimes I_2\right):
	  \alpha,\beta\in \mathbb{Z}_p
	\right\}.
\end{equation}
This is $*$-ring isomorphic to 
$(\mathbb{Z}_p\oplus \mathbb{Z}_p,(\alpha,\beta)^\bullet=(\beta,\alpha))$.
Thus, $E_p$ is a centrally indecomposable group of exchange type.
\end{proof}

\begin{ex}[A centrally indecomposable group of unitary type]
Let $p$ be odd and $\omega\in\mathbb{Z}$ be a non-square modulo $p$.
\begin{equation}
\begin{split}
U_p:= &\langle a,b,c,d,e,f~|~a^p,b^p,c^p,d^p,e^p,f^p,
	[a,b]^p,[a,c]^p,[b,c]^p,\\
	& [a,b]^{\omega}[d,e],[a,c]^{\omega}[d,f],[b,c]^{\omega}[f,e],\\
	& [a,e],[a,f],[b,d],[b,e],[b,f],[c,d],[c,f],[c,e],\textnormal{class }2\rangle
\end{split}
\end{equation}
is a special group of order $p^{12}$ and is centrally indecomposable of
unitary type.
\end{ex}
\begin{proof}
$\Bi(U_p)$ is bilinear map $\mathbb{Z}_p^6\times \mathbb{Z}_p^6\to \mathbb{Z}_p^4$.  With respect
to the bases $$\{Z(U_p)a,Z(U_p)b,Z(U_p)c,Z(U_p)d,Z(U_p)e,Z(U_p)f\}$$ and 
$\{x:=[a,b],y:=[a,c],z:=[b,c],u:=[a,d]\}$, $\Bi(U_p)$ is defined by
\begin{equation}
	B := \left[\begin{array}{ccc|ccc} 
		0 & x & y & u & 0 & 0\\ 
		-x & 0 & z & 0 & 0 & 0\\
		-y & -z & 0 & 0 & 0 & 0 \\
		\hline
		-u & 0 & 0 & 0 & -\omega x & -\omega y\\ 
		0 & 0 & 0 & \omega x & 0 & -\omega z \\
		0 & 0 & 0 & \omega y & \omega z & 0
		\end{array}
		\right].
\end{equation}
By computing we find:
\begin{equation}
	\Adj(B) =\left\{\left(
	\begin{bmatrix} \alpha & \beta\\ \omega \beta & \alpha\end{bmatrix}
	\otimes I_3, \begin{bmatrix} \alpha & -\beta \\ -\omega\beta
		& \alpha \end{bmatrix}\otimes I_3\right) :
		\alpha,\beta\in \mathbb{Z}_p\right\}.
\end{equation}
This is $*$-ring isomorphic to $GF(p^2)=\mathbb{Z}_p[x]/(x^2-\omega)$ 
with field involution $\sqrt{\omega}\mapsto -\sqrt{\omega}$.
Thus, $U_p$ is a centrally indecomposable group of unitary type.
\end{proof}

\begin{ex}[A centrally indecomposable group of exchange type]
\begin{equation}
\begin{split}
p^{1+2}_{+}:= &\langle a,b~|~a^p,b^p,[a,b]^p,\textnormal{class }2\rangle
\end{split}
\end{equation}
is an extraspecial group of order $p^{3}$ and is centrally 
indecomposable of symplectic type.
\end{ex}
\begin{proof}
$\Bi(p^{1+2}_+)$ is bilinear map $\mathbb{Z}_p^2\times \mathbb{Z}_p^4\to \mathbb{Z}_p^4$.  With respect
to the bases $$\{Z(E_p)a,Z(E_p)b\}\textnormal{ and }\{x:=[a,b]\},$$
 $\Bi(p^{1+2}_+)$
is defined by
\begin{equation}
	B := \begin{bmatrix} 0 & x\\ -x & 0\end{bmatrix}
\end{equation}
Clearly,
\begin{equation}
	\Adj(B) =\left\{\left(
	 \begin{bmatrix} \alpha & \beta \\ \gamma & \delta\end{bmatrix},
	 \begin{bmatrix} \delta & -\beta \\ -\gamma & \alpha \end{bmatrix}
	 \right):	  \alpha,\beta,\gamma,\delta\in \mathbb{Z}_p
	\right\}.
\end{equation}
This is $*$-ring isomorphic to $M_2(\mathbb{Z}_p)$ with symplectic
involution.
Thus, $p^{1+2}_+$ is a centrally indecomposable group of symplectic type.
\end{proof}

\section{Examples of centrally decomposable groups}\label{app:Tang}

We now demonstrate how central products can be used to characterize
$p$-groups.  We conclude by reproving an example of Taft and generalizing it to an infinite family.

The most common central product is one where a group is created as a central product of a single group $G$ with center identified in a natural fashion, specifically: 
$$G\circ G:=G\times G/\langle (x,y)\in 
Z(G\times G): xy=1\rangle.$$
A subtle generalization is to include exponents in the identification:
\begin{defn}
Fix $(a_1,\dots, a_n)\in \mathbb{Z}^n$.
For a group $G$ define:
\begin{equation}
	G^{\circ(a_1,\dots,a_n)}
		= G^n/\langle (x_1,\dots, x_n)\in Z(G^n) :
			x_1^{a_1}\cdots x_n^{a_n}=1\rangle.
\end{equation}
\end{defn}
Evidently, $G^{\circ(a_1,\dots,a_n)}$ is an $n$-fold central product
of $G$ but where the centers are identified according to the given
exponents.

\begin{ex}
Define
\begin{equation}\label{eq:odd-even-ex}
\begin{split}
	R_p  := & \langle a,b,c,d,e,f~|~ a^p,b^p,c^p,d^p,e^p,f^p,\\
		& [a,b][a,e]^{-2},~[a,c][a,f]^{-2},~[a,d],~[a,e]^p,~[a,f]^p,\\
		& [b,c][b,f]^{-2},~[b,d][a,e],~[b,e],~[b,f]^p,\\
		& [c,d][a,f],~[c,e][b,f],~[c,f],\\
		& [d,e][a,e]^{-2},~[d,f][a,f]^{-2},\\
		& [e,f][b,f]^{-2},
		\textnormal{class 2}\rangle.
\end{split}
\end{equation}
Then $|R_p|=p^{9}$, $R'_p\cong \mathbb{Z}_p^3$, $R_p/R'_p\cong \mathbb{Z}_p^6$.
Furthermore,
\begin{enumerate}[(i)]
\item $R_2$ is a special group and centrally indecomposable of symplectic
type,
\item $R_3\cong Or_3\times \mathbb{Z}_3^3$ and is almost special
(i.e.: $R_3'=\Phi(R_3)\leq Z(R_3)$ and $Z(R_3)$ is elementary abelian),
and
\item if $p>3$ then $R_p\cong  Or_p^{\circ(1,3)}$.  Furthermore,
$R_p\cong Or_p\circ Or_p$ if, and only if, $3$ is a square modulo $p$.
\end{enumerate}
\end{ex}
\begin{proof}
Set $x:=[a,e]$, $y:=[a,f]$, and $z:=[b,f]$.  Evidently $R_p'=\langle x,y,z\rangle\cong \mathbb{Z}_p^3$ and $R_p$ has order $p^9$.  

When $p\neq 3$, $R'_p=Z(R_p)$.  Furthermore, 
\begin{equation}
R_p/Z(R_p)=\langle Z(R_p)a,Z(R_p)b,Z(R_p)c,Z(R_p)d,Z(R_p)e,Z(R_p)f\rangle
	\cong \mathbb{Z}_p^6.
\end{equation}
With respect to the given generators, 
$b:=\Bi(R_p):\mathbb{Z}_p^6\times \mathbb{Z}_p^6\to \mathbb{Z}_p^3$ 
(Section \ref{sec:bi-grp}) is defined by $b(u,v)=uBv^t$, for all
$u,v\in\mathbb{Z}_p^6$, where:
\begin{equation}\label{eq:B2}
B:=\left[\begin{array}{ccc|ccc}
0 & 2x & 2y & 0 & x & y\\
-2x & 0 & 2z & -x & 0 & z \\
-2y & -2z & 0 & -y & -z & 0\\
\hline
0 & x  & y & 0 & 2x & 2y \\
-x & 0 & z & -2x & 0 & 2z \\
-y & -z & 0 & -2y & -2z & 0
\end{array}\right]
= \begin{bmatrix} 2 & 1 \\ 1 & 2\end{bmatrix}
\otimes \begin{bmatrix} 0 & x & y \\ -x & 0 & z\\ -y & -z & 0\end{bmatrix}.
\end{equation}
(The tensor notation is the usual Kroncher product and we use it here
to compress the data; for more on adjoints and tensors see 
\cite[Section 7.2]{Wilson:unique}.)  As $p\neq 3$, $D:=\begin{bmatrix}
2 & 1 \\ 1 & 2\end{bmatrix}$ is invertible.  Furthermore, 
computing $\Adj(B)$ as in Section \ref{sec:adj-sym-algo} shows that
\begin{equation}
\Adj(B) = \{ (A\otimes I_3,
D A^t D^{-t}\otimes I_3): A\in M_2(\mathbb{Z}_p)\}
\end{equation}
which is $*$-isomorphic to $\Adj(D)$.  The bilinear map
$d:\mathbb{Z}_p^2\times \mathbb{Z}_p^2\to \mathbb{Z}_p$ given by
$d(u,v):=uDv^t$, for all $u,v\in\mathbb{Z}_p^2$, is a symmetric nondegenerate bilinear form.  

$(i)$. If $p=2$, then $d$ is also a nondegenerate alternating bilinear form
of dimension $2$, and $d$ is $\perp$-indecomposable.  Thus
$\Adj(d)\cong \Adj(D)\cong \Adj(B)$ has no proper self-adjoint-primitive
idempotents; that is, $R_2$ is centrally indecomposable of symplectic
type; see \thmref{thm:types}.

$(iii)$.
If $p>3$ then $d$ has an orthogonal basis, for instance 
$\{(1,-1),(1,1)\}$.  This produces the following self-adjoint frame
for $\Adj(d)$:
\begin{equation}
\left\{
e:=
\begin{bmatrix}
1/2 & -1/2 \\
-1/2 & 1/2
\end{bmatrix},
f:=
\begin{bmatrix}
1/2 & 1/2 \\
1/2 & 1/2
\end{bmatrix}
\right\}.
\end{equation}
Thus, $\mathcal{E}:=\{e\otimes I_3,f\otimes I_3\}$ is a self-adjoint frame of $\Adj(B)$ and decomposes $\mathbb{Z}_p^6$ into:
\begin{eqnarray}
E & := & 
	\langle (1,0,0,-1,0,0),(0,1,0,0,-1,0),(0,0,1,0,0,-1)\rangle,\\
F & := & 
	\langle (1,0,0,1,0,0),(0,1,0,0,1,0),(0,0,1,0,0,1)\rangle,
\end{eqnarray}
and $b(E,F)=0$; thus, $\{E,F\}$ is a fully refined $\perp$-decomposition
of $b$ of maximum possible length.
Pulling back to subgroups of $R_p$ we have:
\begin{equation}
	H_{(1,-1)} = \langle ad^{-1},be^{-1},cf^{-1}\rangle \leq R_p
	\textnormal{ and }
	H_{(1,1)} = \langle ad,be,cf\rangle \leq R_p.
\end{equation}
So $\{H_{(1,-1)},H_{(1,1)}\}$ is a fully refined central decomposition 
of $R_p$.  Indeed, if we change the basis of $b$ so to the bases given 
for $E$ and $F$ we have:
\begin{equation}
\tilde{B}:= \begin{bmatrix} 2 & 0 \\ 0 & 6\end{bmatrix}
\otimes \begin{bmatrix} 0 & x & y \\ -x & 0 & z\\ -y & -z & 0\end{bmatrix}.
\end{equation}
Let $\tilde{x}:=2x=[a,b]$, $\tilde{y}:=2y=[a,c]$, and
$\tilde{z}:=2z=[b,c]$.  Thus, 
\begin{equation}
\tilde{B}:= \begin{bmatrix} 1 & 0 \\ 0 & 3\end{bmatrix}
\otimes \begin{bmatrix} 0 & \tilde{x} & \tilde{y} \\ 
-\tilde{x} & 0 & \tilde{z}\\ -\tilde{y} & -\tilde{z} & 0\end{bmatrix}.
\end{equation}
Thus, it is clear that $H_{(1,-1)}$ and $H_{(1,1)}$ are isomorphic to $Or_p$ and furthermore,
$R_p=H_{(1,-1)}H_{(1,1)}\cong Or_p^{\circ (1,3)}$.  
If $3\equiv \alpha^{-2}~ (p)$, for some $\alpha\in \mathbb{Z}$, then set:
\begin{equation}
	H_{(\alpha,\alpha)}:=\langle a^{\alpha}d^{\alpha}, 
		b^{\alpha}e^{\alpha}, c^{\alpha} f^{\alpha}\rangle.
\end{equation}
Thus, $R_p=H_{(1,-1)}H_{(\alpha,\alpha)}\cong Or_p^{\circ(1,1)}$.

$(ii)$.  If $p=3$ we can compute $Z(R_3)$ directly to verify the 
properties.  However, an alternative approach is to use the related
bilinear map $\Bi(R_3,R_3'):R_3/R_3'\times R_3/R_3'\to R_3'$.  This
produces a $\mathbb{Z}_3$-bilinear map exactly as in \eqref{eq:B2}.
The only exception is that $B$ is degenerate.  Computing a basis for
the radical of $B$ can be done by computing a basis for the radical of 
$D=\begin{bmatrix} 2 & 1 \\ 1 & 2\end{bmatrix}$; e.g.: 
write $D$ with respect to the basis $\{(1,0),(1,1)\}$.  
Pulling back this basis to $R_3/R_3'$ we have $b(u,v)=u\tilde{B}v^t$, 
for all $u,v\in \mathbb{Z}_3^6$, where
\begin{equation}
\tilde{B} := \begin{bmatrix} 2 & 0 \\ 0 & 0 \end{bmatrix}
\otimes \begin{bmatrix}
0 & x & y \\ -x & 0 & z\\ -y & -z & 0 \end{bmatrix}.
\end{equation}
Pulling back to subgroups of $R_p$ we have the following
central factors:
\begin{eqnarray}
	H_{(1,0)}  :=  \langle a,b,c\rangle\cong Or_3,\textnormal{ and }
	H_{(1,1)} :=\langle ad,be,cf\rangle R_3'\cong \mathbb{Z}_3^6.
\end{eqnarray}
So $R_3=H_{(1,0)}H_{(1,1)}\cong Or_3\times \mathbb{Z}_3^3$.
\end{proof}

We now construct the example of C.Y. Tang 
\cite[Section 6]{Tang:cent-2} of a $2$-group with fully
refined central decompositions of different sizes.  This
demonstrates where the algorithm for \thmref{thm:main} 
must proper select a central decomposition of maximum possible length.
\begin{ex}[C.Y. Tang]\label{ex:Tang}
\begin{equation}
R_2\circ Or_2:=\langle a,b,c,d,e,f\rangle\times\langle s,t,u\rangle
/\langle [a,e][s,t]^{-1},[a,f][s,u]^{-1},[b,f][t,u]^{-1}\rangle
\end{equation}
is isomorphic to $Or_2\circ Or_2\circ Or_2$.  Yet $R_2$ and $Or_2$ are
both centrally indecomposable.
\end{ex}
We provide an alternative proof using the approach of \thmref{thm:main}.
\begin{proof}  
Let $P:=R_2\circ Or_2$.  Using the obvious bases of $R_2/Z(R_2)\times
Or_2/Z(Or_2)$ given by \eqref{eq:odd-even-ex} and
\eqref{eq:free-rank-3}, produces the following matrix defining $\Bi(P)$:
\begin{equation}
B:= D\otimes \begin{bmatrix} 0 & x & y\\ -x & 0 & z\\ -y & -z & 0 
\end{bmatrix},
\qquad 
D:=\begin{bmatrix} 0 & 1 & 0 \\ 1 & 0 & 0 \\ 0 & 0 & 1\end{bmatrix}.
\end{equation}
Thus, $\Adj(B)\cong \Adj(D)$.  The map $d:\mathbb{Z}_2^3\times \mathbb{Z}_2^3\to \mathbb{Z}_2$ defined by $d(u,v)=uDv^t$ is a nondegenerate 
symmetric bilinear form which has an orthonormal basis
$\{(0,1,1),(1,1,1),(1,0,1)\}$.  Evidently this produces a 
$\perp$-decomposition of $d$ (and $b$) of maximum possible
length. The corresponding fully refined central decomposition of $P$ 
has the following factors:
\begin{eqnarray}
	H_{(0,1,1)} & := & \langle ds,et,fu\rangle\cong Or_2,\\
	H_{(1,1,1)} & := & \langle ads,bet,cfu\rangle\cong Or_2,
		\textnormal{ and }\\
	H_{(1,0,1)} & := & \langle as,bt,cu\rangle \cong Or_2.
\end{eqnarray}
\end{proof}

\begin{remark}
Our proof of Example \ref{ex:Tang} can be applied to 
central products where $Or_2$ is replaced by any $2$-group of 
orthogonal type.  Asymptotically, there are $2^{2n^3/27+O(n^2)}$ such groups of order $2^n$ \cite{Wilson:indecomp}; thus, there are infinite expanding families of examples of the type introduced by Tang.
\end{remark}

%
%

\def\cprime{$'$}

\end{document}